\renewcommand{\phi}{\varphi}
\renewcommand{\epsilon}{\varepsilon}
\newcommand{\sA}{\mathcal{A}}
\newcommand{\C}{\mathbf{C}}
\newcommand{\D}{\mathbb{D}}
\newcommand{\F}{\mathbf{F}}
\newcommand{\G}{\mathbf{G}}
\renewcommand{\L}{\mathbb{L}}
\newcommand{\N}{\mathbf{N}}
\renewcommand{\P}{\mathbf{P}}
\newcommand{\Q}{\mathbf{Q}}
\newcommand{\R}{\mathbf{R}}
\newcommand{\Z}{\mathbf{Z}}
\newcommand{\tr}{{\operatorname{tr}}}
\newcommand{\gm}{{\operatorname{gm}}}
\newcommand{\eff}{{\operatorname{eff}}}
\renewcommand{\hom}{{\operatorname{hom}}}
\newcommand{\num}{{\operatorname{num}}}
\newcommand{\proj}{{\operatorname{proj}}}
\newcommand{\et}{{\operatorname{\acute{e}t}}}
\renewcommand{\o}{{\operatorname{o}}}
\newcommand{\op}{{\operatorname{op}}}
\newcommand{\tot}{{\operatorname{tot}}}
\newcommand{\near}{{\operatorname{near}}}
\newcommand{\rd}{{\operatorname{red}}}
\newcommand{\Serre}{{\operatorname{Serre}}}
\newcommand{\Dir}{\operatorname{Dir}}
\newcommand{\Eul}{\operatorname{Eul}}
\newcommand{\DM}{\operatorname{DM}}
\newcommand{\DA}{\operatorname{DA}}
\newcommand{\Chow}{\operatorname{Chow}}
\newcommand{\Hodge}{\operatorname{Hodge}}
\newcommand{\Spec}{\operatorname{Spec}}
\newcommand{\Sm}{\operatorname{Sm}}
\newcommand{\Sch}{\operatorname{Sch}}
\newcommand{\SmCor}{\operatorname{SmCor}}
\newcommand{\Corr}{\operatorname{Corr}}
\newcommand{\Ker}{\operatorname{Ker}}
\newcommand{\Coker}{\operatorname{Coker}}
\newcommand{\IM}{\operatorname{Im}}
\newcommand{\Div}{\operatorname{Div}}
\newcommand{\End}{\operatorname{End}}
\newcommand{\Hom}{\operatorname{Hom}}
\newcommand{\uHom}{\operatorname{\underline{Hom}}}
\renewcommand{\Re}{\operatorname{Re}}
\newcommand{\Sp}{\operatorname{sp}}
\newcommand{\sF}{\mathcal{F}}
\newcommand{\sH}{\mathcal{H}}
\newcommand{\sM}{\mathcal{M}}
\newcommand{\sN}{\mathcal{N}}
\newcommand{\sT}{\mathcal{T}}
\newcommand{\ff}{\mathfrak{f}}
\newcommand{\fp}{\mathfrak{p}}
\newcommand{\un}{\mathbf{1}}
\newcommand{\by}[1]{\overset{#1}{\longrightarrow}}
\newcommand{\iso}{\by{\sim}}
\newcommand{\yb}[1]{\overset{#1}{\longleftarrow}}
\newcommand{\osi}{\yb{\sim}}
\newcommand{\inj}{\hookrightarrow}
\newcommand{\colim}{\varinjlim}
\DeclareFontFamily{U}{wncy}{}
\DeclareFontShape{U}{wncy}{m}{n}{%
<5>wncyr5%
<6>wncyr6%
<7>wncyr7%
<8>wncyr8%
<9>wncyr9%
<10>wncyr10%
<11>wncyr10%
<12>wncyr6%
<14>wncyr7%
<17>wncyr8%
<20>wncyr10%
<25>wncyr10}{}
\DeclareMathAlphabet{\cyr}{U}{wncy}{m}{n}
\newcommand{\Be}{\cyr{B}}
\newtheorem{thm}{Theorem}[section]
\newtheorem{lemma}[thm]{Lemma}
\newtheorem{prop}[thm]{Proposition}
\newtheorem{cor}[thm]{Corollary}
\theoremstyle{definition}
\newtheorem{defn}[thm]{Definition}
\newtheorem{rk}[thm]{Remark}
\newtheorem{rks}[thm]{Remarks}
\newtheorem{qn}[thm]{Question}
\newtheorem{qns}[thm]{Questions}
\newtheorem{ex}[thm]{Example}
\newcounter{spec}
\newenvironment{thlist}{\begin{list}{\rm{(\roman{spec})}}%
{\usecounter{spec}\labelwidth=20pt\itemindent=0pt\labelsep=10pt}}%
{\end{list}}
\begin{document}

\title{Zeta and $L$ functions of  Voevodsky motives}
\author{Bruno Kahn}
\address{IMJ-PRG\\Case 247\\
4 place Jussieu\\
75252 Paris Cedex 05\\
France}
\email{bruno.kahn@imj-prg.fr}
\date{Feb. 2012 and Dec. 2024}
\subjclass[2020]{11M41, 11G09}
\keywords{Zeta functions, motives, six functors formalism}
\begin{abstract}
We associate an $L$-function $L^\near(M,s)$ to any geometric motive over a global field $K$ in the sense of  Voevodsky. This is a Dirichlet series which converges in some half-plane and has an Euler product factorisation. When $M$ is the dual of $M(X)$ for $X$ a smooth projective variety, $L^\near(M,s)$ differs from the alternating product of the zeta functions defined by Serre in 1969 only at places of bad reduction; in exchange, it is multiplicative with respect to exact triangles. If $K$ is a function field over $\F_q$, $L^\near(M,s)$ is a rational function in $q^{-s}$ and enjoys a functional equation. The techniques use the full force of Ayoub's six (and even seven) operations.
\end{abstract}
\maketitle

\hfill Preliminary version

\tableofcontents

\section*{Introduction} Let $K$ be a global field, and let $X$ be a smooth projective $K$-variety. In \cite{serre}, after a long search (see \cite[letter avril 1963, p. 143]{corr}), Serre succeeded to define local factors of zeta functions associated to the cohomology groups of $X$, generalising both Artin's $L$-functions and the Hasse-Weil zeta function associated to an elliptic curve. When $K$ is a number field, this definition has been generalised by Scholl \cite{scholl} to `mixed motives', \emph{i.e.} objects of the abelian category defined by Jannsen and Deligne with systems of realisations in \cite{jannsen} and \cite{deligne}, thus allowing for a more conceptual reformulation of Beilinson's conjectures on special values of $L$-functions  (see also Fontaine-Perrin Riou \cite{fpr} and Deninger \cite{den}).\footnote{There is no candidate for an abelian category of mixed motives in positive characteristic at present.}

These definitions use $l$-adic cohomology and, unfortunately,  depend on a still unproven conjecture: without even mentioning mixed motives, this is already the case in \cite{serre}. Namely, the local factor from \cite[\S 2]{serre} at a finite place $v$ of $K$ is of the form
\begin{equation}\label{eq0}
L_v^\Serre(H^i(X),s)=\det(1-\phi_v N(v)^{-s}\mid H^i(\bar X,\Q_l)^{I_v})^{-1}
\end{equation}
where $\phi_v$ is a geometric Frobenius at $v$, $l$ is a prime number not dividing $N(v)$ and $I_v$ is the inertia at $v$. Here $N(v)$ is the cardinality of the residue field at $v$. But for this expression to make sense as a complex function of the variable $s$, one needs at the very least that the coefficients of this inverse polynomial be complex numbers, while they are defined as $l$-adic numbers. If $v$ is a place of good reduction,  \eqref{eq0} boils down by smooth and proper base change to $\det(1-\phi_v N(v)^{-s}\mid H^i(\bar X(v),\Q_l))^{-1}$ where $X(v)$ is the special fibre of a smooth model of $X$ at $v$, which does not depend on the choice of the prime $l$ and has rational  coefficients by Deligne's proof of the ``Riemann hypothesis'' for $X(v)$ \cite{WeilI}. That this persists when $X$ has bad reduction at $v$ was proven by Terasoma in positive characteristic \cite{ter} but remains open over number fields in general \cite[$C_5$]{serre}. See \cite[5.6.3, 5.6.4]{zetaL} for a detailed discussion. 

Another issue is that the $L$-functions of \eqref{eq0} seem difficult to manipulate: if $0\to M'\to M\to M''\to 0$ is a short exact sequence of mixed motives, the equality $L^\Serre(M,s)=L^\Serre(M',s)L^\Serre(M'',s)$ fails in general.

Let  us now pass from mixed motives to \emph{triangulated motives}. Let \break  $\DM_\gm(K,\Q)$ denote Voevodsky's triangulated category of geometric motives over $K$, with rational coefficients \cite{voetri,mvw}. In this article, we associate to any object $M\in \DM_\gm(K,\Q)$ a Dirichlet series $L^\near(M,s)$, called the \emph{nearby $L$-function} of $M$, which has the following properties:
\begin{description}
\item[Convergence] $L^\near(M,s)$ has a finite abscissa of convergence.
\item[Multiplicativity] If $M'\to M\to M''\by{+1}$ is an exact triangle, then
\[L^\near(M,s)=L^\near(M',s)L^\near(M'',s).\]
In particular, $L^\near(M[1])=L^\near(M,s)^{-1}$.
\item[Tate twists] $L^\near(M(1),s) = L^\near(M,s+1)$.
\item[Euler decomposition] There is a factorisation
\[L^\near(M,s)=\prod_v L^\near_v(M,s)\]
where $v$ runs through the finite places of $K$ and $L^\near_v(M,s)$ is an \emph{$N(v)$-Euler factor}, i.e. a rational function in $N(v)^{-s}$ with $\Q$ coefficients whose zeroes and poles are $N(v)$-Weil numbers. 
\item[Inductivity] Let $L/K$ be a finite extension and $f:\Spec L\to \Spec K$ be the corresponding morphism. Then
\[L^\near(M,s)=L^\near(f_!M,s)\]
for any $M\in \DM_\gm(L,\Q)$, where $f_!:\DM_\gm(L,\Q)\to \DM_\gm(K,\Q)$ is the push-forward functor.
\item[``Normalisation''] Let $X$ be a smooth projective $K$-variety. If $v$ is a place of good reduction for $X$, then $L^\near_v(M(X)^*,s) = \zeta(X(v),s)$, where $X(v)$ is the special fibre of a smooth projective model of $X$ at $v$ (this zeta function does not depend on the choice of such a model, \cite[Prop. 5.6]{zetaL}). Here $M(X)^*$ is the dual of the motive $M(X)$ of $X$. This extends to $L^\near(\Phi(N)^*,s)$, where $N$ is a Chow motive and $v$ is a place of good reduction for $N$, where $\Phi:\Chow(K,\Q)\to \DM_\gm(K,\Q)$ is Voevodsky's functor \cite[Prop. 2.1.4]{voetri}.
\item[Rationality and functional equation] If $K$ has positive characteristic with field of constants $k\simeq \F_q$, then $L^\near(M,s)$ is a $q$-Euler factor and enjoys a functional equation
\[L^\near(M^*,1-s) = A(-q)^{-Bs} L^\near(M,s)\]
where $A\in \Q^*$ and $B\in \Z$ are some explicit numbers (Theorem \ref{t9.2}).
\item[Archimedean primes] If $K$ is a number field, to any archimedean place $v$ of $K$ is associated a ``local gamma factor'' $\Gamma_v(M,s)$ which is multiplicative in $M$ and agrees with that of a smooth projective variety $X$ as in Serre \cite[\S 3]{serre} for $M=M(X)^*$.
\end{description}

The definition of $L^\near(M,s)$ purely rests on formal properties of the rigid $\otimes$-triangulated category $\DM_\gm(K,\Q)$ and its generalisations over a base, hence, \emph{in fine}, on algebraic cycles. In particular, it is independent of $l$ \emph{a priori}. The superscript  `near' is there to point out that this definition is not very different from the classical one for smooth projective varieties, but also that it can be computed in terms of Ayoub's specialisation systems (nearby cycles): Theorem \ref{t9.1}. The quotation marks at normalisation are there because this property is sufficient, together with multiplicaitivity, only to characterise $L^\near$ up to finite products of Euler factors. It would be nice to find extra properties which make this uniqueness true on the nose. %In the presence of a functional equation it would yield uniqueness, which is therefore true in positive characteristic.

If $X$ is smooth projective, $L^\near_v(M(X)^*,s)$ is in general different from $L_v^\Serre(X,s)$ when $v$ is a place of bad reduction, where $L_v^\Serre(X,s)$ is the alternating product of the functions of \eqref{eq0}: see \S \ref{s9.D.2} for the example of an elliptic curve with multiplicative reduction. To explain the idea behind its definition, note that there is a competing local factor, replacing $H^i(\bar X,\Q_l)^{I_v}=H^0(I_v,H^i(\bar X,\Q_l))$ with $H^1(I_v,H^i(\bar X,\Q_l))=\break H^i(\bar X,\Q_l)_{I_v}(-1)$ (compare \cite[note 164.23]{corr}): $L^\near(M(X)^*,s)$ is a ``multiplicative average'' of these two alternating products, see Theorem \ref{t8.2} b) and Definition \ref{d9.1}. 

After getting the idea to define triangulated $L$-functions and finding how, I wondered if Grothendieck would himself have had a similar concern. And indeed he raises the issue twice, in \cite[letter 30.9.1964, top p. 196]{corr} responding to the letter quoted above where Serre formulates the question,  and in \emph{loc. cit.}, letter of 3 and 5 Oct. 1964, c) p. 202. In the first letter, he claims (unless mistaken) that the formula of \cite{serre} is indeed multiplicative\footnote{\emph{Lorsqu'on veut à tout prix une fonction $L$ qui dépende multiplicativement de $M$, il me semble hors de doute que la définition que tu préconises est la meilleure.}}. This looks strange, since taking invariants under inertia is not a (right) exact functor. In the second, he backtracks and prefers a ``définition bébête'' where $L(M,s)$ (for a mixed motive $M$) is defined as the product of $L$-functions of the factors of its semi-simplification. It will turn out, after the fact, that the latter idea is the right one for triangulated motives, see \eqref{eq9.3}. For mixed motives, however, Grothendieck's first proposal as implemented in the references given at the beginning seems to be the most interesting and the most profound.  But it depends on conjectures\dots

Thus I hope that the present construction, its unconditionality and its multiplicative property will be helpful for making progress towards the Beilinson conjectures.

This version is preliminary because I stopped at a `honest' functional equation in the sense of Grothendieck's first letter quoted above (see \cite[p. 197]{corr}). To get a nicer one as in \cite{serre} would involve giving a formula à la Grothendieck-Ogg-\v Safarevi\v c \cite[Exp. X]{SGA5}  for the Euler Poincaré characteristic of a motive of the form $f_!M$, where $f=S\to \Spec \F_q$ is a smooth projective curve. While this can obviously be done via an $l$-adic realisation, it is not clear (to me) that the local terms are independent of $l$, see Question \ref{q10.1}. The same issue arises in higher dimension when using Takeshi Saito's theory of the characteristic cycles \cite{tsaito}. Joseph Ayoub has suggested to use the Galois action on his ``full'' specialisation systems $\Psi_x$ \cite[Déf. 10.14]{ayoubetale}; I hope to come back to this in a further version.

\subsection*{Strategy} We first associate a zeta function to any motive in $\DM_\gm(k,\Q)$ when $k$ is a finite field, by using categorical traces of powers of Frobenius in this rigid $\otimes$-category. One key result is that the zeta function of the ``Borel-Moore'' motive of $X$ is the zeta function of $X$, when $X$ is an $\F_q$-scheme of finite type (Corollary \ref{c1}). We then extend this to motives over a $\Z$-scheme of finite type $S$ by the usual product formula over the closed points of $S$. Here a second key result is a trace formula, which allows us to compute these zeta functions as zeta functions of motives over $\F_q$ when $S$ is an $\F_q$-scheme (Theorem \ref{t3.1}): the proof, relying on the previous result, is almost purely motive-theoretic but we cannot avoid using the $l$-adic realisation functor (hence the trace formula of \cite{SGA5}), because of a problem of idempotents. We also get a functional equation when $S$ is proper over $\F_q$ (\emph{ibid.}). The six functors formalism established by Ayoub in his thesis \cite{ayoub} is central in these definitions and properties.

All these zeta functions are rational functions of $p^{-s}$ in characteristic $p$. In Corollary \ref{c6.1}, we show that they converge (in some half-plane) also in characteristic $0$, \emph{i.e.} when $S$ is dominant over $\Spec \Z$.

In Section \ref{s9} we arrive at the heart of the matter: the case of motives over a global field $K$. Since $\Spec K$ is not of finite type over $\Spec \Z$, the issue is to get a reasonable definition out of the previous work. We do it in two steps: first define a ``total'' $L$-function (Definition \ref{l9.1}), and then deduce the ``nearby'' $L$-function from it (Definition \ref{d9.1}), by applying Lemma \ref{l2}. In characteristic $p>0$, we get a functional equation in Theorem \ref{t9.2}. All this latter work uses heavily the six operations again, and even the seventh (the unipotent specialisation system).

\subsection*{Acknowledgements} This work has been in gestation since 1998. Since then, Science has made progress and much of this progress has been incorporated here. It has been announced at conferences a number of times, the most recent being at Regensburg in 2012 \cite{zetaLRegensburg}. The reasons of my procrastination are not entirely clear. I thank  Joseph Ayoub, Mikhail Bondarko, Fr\'ed\'eric D\'eglise, Luc Illusie and Amílcar Pacheco for helpful exchanges, with a special mention to Ayoub: he kindly wrote up his paper \cite{ayoubetale} on the $l$-adic realisations upon my request and helped me at a large number of places in this manuscript. As an exercise, the reader may count the number of times when I credit him for a proof or an idea.

\enlargethispage*{30pt}

\subsection*{Conventions and notation} As usual, a $\otimes$-category means a symmetric monoidal unital additive category, and a $\otimes$-functor is an additive strong symmetric monoidal functor. We refer to \cite[App. 8.A]{mvw} for tensor triangulated categories.

\numberwithin{equation}{section}

\section{The rigid $\otimes$-category $\DM_\gm(k,\Q)$}\label{s1}

Let $k$ be a field. We consider the category $\DM_\gm(k,\Q)$ defined by Voevodsky in
\cite{voetri} (here, with rational coefficients). It is provided  with a covariant functor ``motive''  $M:\Sm(k) \to \DM_\gm(k,\Q)$, where $\Sm(k)$ is the category of smooth separated $k$-schemes. In \cite{voetri}, two properties of
$\DM_\gm(k,\Q)$ are established when $k$ is of characteristic $0$:

\begin{itemize}
\item It is a rigid tensor pseudo-abelian triangulated category, generated by the motives of smooth projective varieties as such. We write $M^*$ for the dual of a motive $M\in \DM_\gm(k,\Q)$.
\item For any separated $k$-scheme of finite type $X$, there is an associated motive with
compact supports $M_c(X)\in \DM_\gm(k,\Q)$ such that $M_c(X)=M(X)^*(d)[2d]$ if $X$ is smooth
of pure dimension $d$, $M_c(X)=M(X)$ if $X$ is proper and, if
$Z\by{i}X$ is a closed subset with complementary open $U\by{j} X$, one has an exact triangle of the form
\begin{equation}\label{eq1.1}
M_c(Z)\by{i_*} M_c(X)\by{j^*} M_c(U)\by{+1}.
\end{equation}
\end{itemize}

\begin{thm}\label{p1.1} These properties hold for any $k$.
\end{thm}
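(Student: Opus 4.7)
My plan of proof is to dispose of the characteristic zero case by Voevodsky \cite{voetri}, and to handle positive characteristic $p>0$ by replacing resolution of singularities with de Jong's theorem on alterations, using $\Q$-coefficients to invert the degrees that arise. Two complementary routes are available: a direct one due to Kelly (following Suslin-Voevodsky on $\ell$dh-descent), and an indirect one via the six-functor formalism of Ayoub \cite{ayoub} (or Cisinski-Déglise). I would use the direct route for the first bullet and the six-functor formalism for the second.

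For the first bullet: given a smooth $k$-scheme $X$, de Jong produces a proper surjective alteration $\tilde X\to X$ from a smooth projective $\tilde X$ of some generic degree $d>0$. Since $d\in\Q^*$, a transfer argument exhibits $M(X)$ as a direct summand of a cone involving $M(\tilde X)$ and motives of lower-dimensional exceptional subschemes, to which the procedure is applied by Noetherian induction. Combined with cdh- or $\ell$dh-descent, this expresses every object of $\DM_\gm(k,\Q)$ as a direct summand of a finite iterated extension of motives of smooth projective varieties, yielding the generation. Rigidity then follows: smooth projective motives are strongly dualizable by Poincaré duality, and the full triangulated subcategory of strongly dualizable objects is thick.

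For the second bullet: I would produce $M_c(X)$ together with the localization triangle \eqref{eq1.1} via Ayoub's six-functor formalism \cite{ayoub}. Concretely, for the structural morphism $f:X\to\Spec k$ one sets things up so that, by relative purity, $M_c(X)=M(X)^*(d)[2d]$ holds for smooth $X$ of pure dimension $d$, while $M_c(X)=M(X)$ holds for proper $X$; the triangle \eqref{eq1.1} is then the standard recollement triangle attached to the closed/open decomposition $Z\inj X\hookleftarrow U$, pushed forward to $\Spec k$ by the exceptional direct image. Agreement with Voevodsky's original definition in characteristic $0$ is then forced on a system of generators.

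The main obstacle is the localization triangle \eqref{eq1.1} in characteristic $p$: Voevodsky's original proof invokes resolution of singularities crucially, and it must now be either reworked using alterations (as Kelly does) or derived formally from the six-functor package. Once either of these is granted, the remaining properties follow by standard manipulations in rigid $\otimes$-triangulated categories, exactly as in characteristic zero.
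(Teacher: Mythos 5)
Your outline for the positive-characteristic case is essentially the route the paper takes for \emph{perfect} fields: the first bullet is Kelly's alteration/$\ell$dh-descent argument (the paper simply cites \cite[App.~B]{motiftate} and \cite[\S 5.3]{kelly} rather than reproving it), and your six-functor construction of $M_c$ with the recollement triangle is a legitimate variant of the second bullet (compare Theorem \ref{t3.2} in the paper, which identifies $M^{BM}(X)$ with $f_!\Z_X$ a posteriori).

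However, there is a genuine gap: you never address \emph{imperfect} fields, which is exactly the content of ``for any $k$'' beyond what was already in the literature. Over an imperfect field of characteristic $p$, de Jong's theorem produces alterations that are regular but need not be smooth over $k$, so your transfer argument does not directly exhibit $M(X)$ as a summand of an extension of motives of smooth projective $k$-varieties; likewise Kelly's results are stated over perfect fields. The paper's proof closes this gap in one line by invoking the equivalence of categories $\DM_\gm(k,\Q)\iso\DM_\gm(k^p,\Q)$ induced by base change to the perfect closure $k^p$ (\cite[Prop.~4.5]{adjoints} or \cite{suslininsep}), which transports both properties from $k^p$ back to $k$. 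Without this (or some substitute), your argument only establishes the theorem for perfect $k$.
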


\begin{proof} When $k$ is perfect,  see \cite[App. B]{motiftate} for the first property  and \cite[\S 5.3]{kelly} for the second. In general, let $k^p$ be the perfect closure of $k$. Then \cite[Prop. 4.5]{adjoints} or \cite{suslininsep} show that the base change functor
\[\DM_\gm(k,\Q)\to \DM_\gm(k^p,\Q)\]
is an equivalence of categories.
\end{proof}

(Theorem \ref{p1.1} is even true with coefficients $\Z[1/p]$, where $p$ is the exponential characteristic of $k$, but we won't use this refinement.)

We shall need:

\begin{lemma}\label{l1.4} Let $f:X\to Y$ be a finite, surjective morphism of smooth $k$-schemes of generic degree $d$, where $k$ is assumed to be perfect. Then $f$ induces morphisms $f^*:M_c(Y)\to M_c(X)$ and $f_*:M_c(X)\to M_c(Y)$ such that $f_*f^*=d$. Moreover,
\begin{itemize}
\item If $f$ is radicial, we also have $f^*f_*=d$. In particular $f^*$ is an isomorphism.
\item If $f$ is a Galois covering of group $G$, then $f^*$ induces an isomorphism $M_c(Y)\iso \epsilon_GM_c(X)$ where $\epsilon_G$ is the idempotent $\frac{1}{G}\sum_{g\in G} g$.
\end{itemize}
\end{lemma}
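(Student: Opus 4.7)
The plan is to build $f^*$ and $f_*$ from the graph of $f$ and its transpose as finite correspondences, and then to evaluate the two compositions as intersection products. Since $f$ is finite surjective between smooth $k$-schemes, I may reduce to the case where $X$ and $Y$ are equidimensional of the same dimension $n$, so that Theorem \ref{p1.1} gives $M_c(X)=M(X)^*(n)[2n]$ and $M_c(Y)=M(Y)^*(n)[2n]$. I would define $f^*:M_c(Y)\to M_c(X)$ as the twist-shift by $(n)[2n]$ of the dual of $M(f):M(X)\to M(Y)$, and $f_*:M_c(X)\to M_c(Y)$ as the analogous dual of the finite correspondence $M(Y)\to M(X)$ induced by the transposed graph $[\Gamma_f^t]\in\SmCor(Y,X)$, which is a finite correspondence because $f$ is finite. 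A standard compatibility inside Voevodsky's formalism identifies this $f_*$ with the covariant functoriality of $M_c$ along the proper morphism $f$.

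For $f_*f^*=d$, by dualising it suffices to show that the composition $[\Gamma_f]\circ[\Gamma_f^t]\in\SmCor(Y,Y)$ equals $d\,[\Delta_Y]$. By the definition of composition of finite correspondences this is the pushforward along $p_{13}:Y\times X\times Y\to Y\times Y$ of the intersection cycle of $[\Gamma_f^t]\times[Y]$ and $[Y]\times[\Gamma_f]$, whose support is $\{(f(x),x,f(x)):x\in X\}$; its image in $Y\times Y$ is $\Delta_Y$, with multiplicity equal to the generic degree $d$ of $f$.

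For the two special cases, the same recipe yields $[\Gamma_f^t]\circ[\Gamma_f]\in\SmCor(X,X)$ equal to the cycle of $X\times_Y X\hookrightarrow X\times X$. If $f$ is radicial, the diagonal $X\to X\times_Y X$ is a universal homeomorphism, so the cycle $[X\times_Y X]$ is supported on $\Delta_X$, with generic multiplicity equal to the length of $K(X)\otimes_{K(Y)}K(X)$ over $K(X)$, i.e. the degree $d$ of the corresponding purely inseparable extension; hence $[X\times_Y X]=d\,[\Delta_X]$, which after dualising gives $f^*f_*=d$ on $M_c(X)$. If $f$ is Galois with group $G$ (hence étale of degree $d=|G|$), then $X\times_Y X=\coprod_{g\in G}\Gamma_g$ scheme-theoretically, so $[\Gamma_f^t]\circ[\Gamma_f]=\sum_{g\in G}[\Gamma_g]$, inducing $\sum_{g\in G}g$ on $M(X)$; dualising gives $f^*f_*=d\,\epsilon_G$ on $M_c(X)$. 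Combined with $f_*f^*=d$, the pair $(f^*,d^{-1}f_*)$ then realises $f^*$ as an isomorphism $M_c(Y)\iso \epsilon_G M_c(X)$.

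The principal obstacle will be the compatibility between the two descriptions of $f_*$: as the proper pushforward on $M_c$, and as the dual of the transpose correspondence. This is standard but requires unpacking the construction of $M_c$ through the closed monoidal and duality structures of $\DM_\gm(k,\Q)$. Once this is granted, everything reduces to elementary intersection-theoretic computations of composition of finite correspondences.
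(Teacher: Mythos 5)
Your proposal is correct and follows essentially the same route as the paper, which simply dualises to reduce to statements about $M(X)$ and $M(Y)$ and then verifies them at the level of the finite correspondences $[\Gamma_f]$ and $[\Gamma_f^t]$; you have merely spelled out the intersection-theoretic computations of $[\Gamma_f]\circ[\Gamma_f^t]$ and $[\Gamma_f^t]\circ[\Gamma_f]$ that the paper leaves implicit. The compatibility with proper pushforward that you flag as the main obstacle is not actually needed for the lemma as stated or for its uses in the paper, so your argument is complete.
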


\begin{proof} By duality, we reduce to the same statement for $M(X)$ and $M(Y)$. Then they are already true on the level of finite correspondences.
\end{proof}

\begin{defn}\label{d1.1} We write $\DM_\gm^\eff(k,\Q)\subset \DM_\gm(k,\Q)$ for the full subcategory of effective geometric motives and, if $n\ge 0$, $d_{\le n}\DM_\gm^\eff(k,\Q)$ for the thick triangulated subcategory of $\DM_\gm^\eff(k,\Q)$ generated by motives of smooth varieties of dimension $\le n$.
\end{defn}

\begin{prop}\label{p1.2} If $\dim X\le n$, then $M_c(X)\in d_{\le n}\DM_\gm^\eff(k,\Q)$.
\end{prop}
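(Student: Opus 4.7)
The plan is induction on $n=\dim X$, reducing at each step to the case of a smooth projective variety, where the conclusion $M(Y)=M_c(Y)\in d_{\le n}\DM_\gm^\eff(k,\Q)$ is tautological. By Theorem \ref{p1.1}, we may assume $k$ perfect; applying \eqref{eq1.1} to $X_\rd\hookrightarrow X$ (empty complement, hence $M_c(\emptyset)=0$) gives $M_c(X)\simeq M_c(X_\rd)$, and iterated application to the irreducible components of $X_\rd$ reduces further to $X$ integral.

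The base case $n=0$ is immediate: $X=\Spec k'$ is smooth projective of dimension $0$. For the inductive step, take $X$ integral of dimension $n\ge 1$ and assume the statement in all dimensions $<n$. Generic smoothness (valid over the perfect field $k$) provides a dense open $U\subset X$ smooth over $k$, with closed complement $Z$ of dimension $<n$; by induction $M_c(Z)\in d_{\le n-1}\DM_\gm^\eff(k,\Q)$, and \eqref{eq1.1} reduces the problem to showing $M_c(U)\in d_{\le n}\DM_\gm^\eff(k,\Q)$.

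To handle the smooth open $U$, choose a projective compactification $U\subset \bar U$ via Nagata and apply de Jong's alteration theorem to $\bar U$: there exist a smooth projective $k$-scheme $Y$ equidimensional of dimension $n$, a proper surjective generically finite morphism $f:Y\to \bar U$, and a dense open $V\subset \bar U$ (which we shrink so that $V\subset U$) such that $W:=f^{-1}(V)\to V$ is a finite \'etale Galois cover of group $G$. Since every component of $Y$ dominates $\bar U$, the closed set $Y\setminus W=f^{-1}(\bar U\setminus V)$ has dimension $\le n-1$, so $M_c(Y\setminus W)\in d_{\le n-1}\DM_\gm^\eff$ by induction; combined with $M_c(Y)=M(Y)\in d_{\le n}\DM_\gm^\eff$ and the localization triangle, this puts $M_c(W)\in d_{\le n}\DM_\gm^\eff$. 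Lemma \ref{l1.4} exhibits $M_c(V)\simeq \epsilon_G M_c(W)$ as a direct summand of $M_c(W)$, hence in $d_{\le n}\DM_\gm^\eff$ by pseudo-abelianness of a thick subcategory. A final application of \eqref{eq1.1} to $V\subset U$, whose complement has dimension $<n$ and thus sits in $d_{\le n-1}\DM_\gm^\eff$ by induction, yields $M_c(U)\in d_{\le n}\DM_\gm^\eff$.

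The main obstacle is the smooth step in positive characteristic: lacking Hironaka, one must appeal to de Jong's alterations, and it is precisely the Galois descent half of Lemma \ref{l1.4} that makes it possible to transfer $M_c(W)\in d_{\le n}\DM_\gm^\eff$ across the generically finite cover down to $M_c(V)$. A secondary point is the compatibility of the filtration $d_{\le n}\DM_\gm^\eff$ with the perfect-closure equivalence of Theorem \ref{p1.1}, which is handled by the analogous descent argument applied to the radicial form of Lemma \ref{l1.4}.
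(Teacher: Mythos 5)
Your proof is correct and follows essentially the same route as the paper: induction on dimension, reduction to a variety, de Jong's alteration, and the direct-summand trick of Lemma \ref{l1.4} to descend along the generically finite cover, with localisation triangles handling the lower-dimensional loci. The only (harmless) difference is that you invoke the Galois/idempotent $\epsilon_G$ form of Lemma \ref{l1.4}, whereas the simpler relation $f_*f^*=d$ for a finite surjective map of smooth schemes already splits off $M_c(V)$ as a direct summand, so no Galois refinement of de Jong is needed here.
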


\begin{proof}We may assume $k$ perfect. Induction on $n$. The case $n=0$ is clear because $M_c(X)=M_c(X_\rd)$ and $X_\rd$ is a (proper) étale $k$-scheme. Suppose $n>0$. By closed Mayer-Vietoris, we reduce to $X$ irreducible and then to $X$ a variety. By de Jong's theorem, there exists an alteration $f:\tilde X\to X$ with $\tilde X$ smooth. Choose a smooth open subset $U$ of $X$ over which $f$ is finite, and let $V=f^{-1}(U)$. By induction, $M_c(V)$ is in $d_{\le n}\DM_\gm^\eff(k,\Q)$ and so is $M_c(U)$ as a direct summand of $M_c(V)$ (Lemma \ref{l1.4}). By induction again, $M_c(X)\in d_{\le n}\DM_\gm^\eff(k,\Q)$.
\end{proof}

\begin{rk} By a similar reasoning, one can see that $d_{\le n}\DM_\gm^\eff(k,\Q)$ is generated by motives of smooth projective varieties of dimension $\le n$.
\end{rk}

\section{The case of a finite field}

\subsection{The ubiquity of Frobenius}\label{s2.A} Let $k=\F_q$ be a finite field with $q$ elements.
Consider the category $\Sch(k)$ of separated $k$-schemes of finite type, viewed as a symmetric monoidal category for the fibre product over $k$. The identity
functor of $\Sch(k)$ has a canonical $\otimes$-endomorphism: the Frobenius endomorphism, namely:
\begin{itemize}
\item Every object $X\in \Sch(k)$ has its ``absolute" Frobenius endomorphism $F_X$: $F_X$ is
the identity on the underlying space of $X$ and is given by $f\mapsto f^q$ on the structural
sheaf.
\item If $f:X\to Y$ is a morphism in $\Sch(k)$, the diagram
\[\begin{CD}
X@>F_X>>X\\
@V{f}VV @V{f}VV\\
Y@>F_Y>>Y
\end{CD}\]
commutes.
\item One has $F_{X\times_k Y} = F_X\times_k F_Y$ for any $X,Y\in \Sch$.
\end{itemize}

Starting from this situation, we can extend it to other categories associated with $k$. For
example, if $A$ is a commutative ring then $\DM_\gm^\eff(k,A)$ is obtained from the full subcategory $\Sm(k)$ of smooth $k$-schemes
via the string of functors
\begin{multline}\label{eq2.2}
\Sm(k)\to \SmCor(k,A)\to C^b(\SmCor(k,A))\\
\to K^b(\SmCor(k,A))\to K^b(\SmCor(k,A))/(MV+HI)\to DM_\gm^\eff(k,A).
\end{multline}

Here, $\SmCor(k,A)$ is the category of finite correspondences on smooth schemes with coefficients in $A$, $MV$ and $HI$
are the ``Mayer-Vietoris" and ``homotopy invariance" relations and the last step is idempotent
completion. At each step, the Frobenius endomorphism extends: for $\SmCor(k,A)$ because it
commutes with finite correspondences; on the third and fourth for formal reasons; on the fifth
because it acts on the $MV$ and $HI$ relations, and on the last once again for formal reasons.

To pass from $\DM_\gm^\eff(k,A)$ to $\DM_\gm(k,A)$ amounts to $\otimes$-inverting the Tate object
$\Z(1)$, and once again Frobenius passes through this formal operation. Similarly, we may take
coefficients in any ring $A$ rather than $\Z$.

We also get Frobenius endomorphisms on categories obtained from categories of (pre)sheaves via
the following simple trick: let $\sF$ be a contravariant functor from, say, $\Sm(k)$ to some
category. We define the Frobenius endomorphism $F_\sF$ of $\sF$ by the formula
\[F_\sF(X) = \sF(F_X)=\sF(X)\to \sF(X).\]

Suppose that $\sF$ takes values in the category of sets: in the special case where it is
representable, say $\sF=y(Y)$, one finds
\[F_{y(Y)} = y(F_Y)\]
for tautological reasons, where $y$ is the Yoneda embedding. 

\begin{rk}
This is the \emph{inverse} of the convention of \cite[XV.2.1]{SGA5}! Cf. \emph{loc. cit.} bottom p. 453.
\end{rk}

In this way, categories like $\DM^\eff(k,A)$ or $\DM^\eff_\et(k,A)$ carry their own Frobenius
automorphism, compatible with the one of $\DM_\gm^\eff(k,A)$ via the comparison functors if need
be.

\begin{ex}\label{ex2.1} We may describe $\Z(1)$ as $\G_m[-1]$. The Frobenius of the group scheme $\G_m$,
hence of the abelian sheaf $\G_m$, coincides with multiplication by $q$. Hence $F_{\Z(1)}$ is also
multiplication by $q$.
\end{ex}

Let $E/k$ be a finite extension of degree $m$, and let $f:\Spec E\to \Spec k$. We have a pair of adjoint functors
\[\DM_\gm(k,\Q)\begin{smallmatrix} f^*\\\rightleftarrows\\f_*\end{smallmatrix}\DM_\gm(E,\Q).\]

\begin{lemma}\label{l2.2a} a) For $M\in \DM_\gm(k,\Q)$, $F_{f^*M}=f^*F_M^m$.\\
b) For $M\in \DM_\gm(E,\Q)$, $F_{f_*M}^m = f_*F_M$.
\end{lemma}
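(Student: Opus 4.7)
The plan is to establish (a) by direct computation on motives of smooth varieties, which suffices because such motives generate $\DM_\gm(k,\Q)$ and, more to the point, $F$ is built from the absolute Frobenius on $\Sm(k)$ via the chain \eqref{eq2.2}, so any natural identity between two $f^*$-endomorphisms that holds on that generating family propagates through the entire construction. Part (b) will then follow formally from (a) via the adjunction $f^*\dashv f_*$.

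For (a), take $M=M(X)$ with $X\in \Sm(k)$; then $f^*M=M(X_E)$ with $X_E=X\times_k E$, and $F_{f^*M}=M(F_{X_E})$, where the absolute Frobenius of $X_E$ as an $E$-scheme raises to the $q^m$-th power (since $|E|=q^m$). This $q^m$-power map is the $m$-th iterate of the $q$-power absolute Frobenius of $X_E$ viewed as a $k$-scheme, and the latter decomposes as $F_X\times_k F_{\Spec E}$ by the multiplicativity recalled in \S\ref{s2.A}. Since $F_{\Spec E}$ raises to the $q$-th power on $E$, $F_{\Spec E}^m$ is the identity of $\Spec E$. Hence
\[
F_{f^*M}=M\bigl((F_X\times_k F_{\Spec E})^m\bigr)=M\bigl(F_X^m\times_k \mathrm{id}_{\Spec E}\bigr)=f^*M(F_X^m)=f^*F_M^m.
\]

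For (b), write $\varepsilon:f^*f_*\Rightarrow \mathrm{id}$ and $\eta:\mathrm{id}\Rightarrow f_*f^*$ for the counit and unit of the adjunction, and set $g=F_{f_*M}$. Naturality of $F$ on $\DM_\gm(E,\Q)$ applied to $\varepsilon_M$ gives $F_M\circ \varepsilon_M=\varepsilon_M\circ F_{f^*f_*M}$, and (a) identifies $F_{f^*f_*M}$ with $f^*g^m=(f^*g)^m$. Applying $f_*$ and precomposing with $\eta_{f_*M}$, the triangle identity $f_*\varepsilon_M\circ \eta_{f_*M}=\mathrm{id}_{f_*M}$ collapses the left-hand side to $f_*F_M$, while $m$-fold naturality of $\eta$ yields $(f_*f^*g)^m\circ \eta_{f_*M}=\eta_{f_*M}\circ g^m$, collapsing the right-hand side to $g^m=F_{f_*M}^m$. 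Thus $f_*F_M=F_{f_*M}^m$.

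The main obstacle is conceptual rather than technical: one must keep the two Frobenius conventions strictly apart, namely the $q$-power on $k$-schemes underlying $F$ on $\DM_\gm(k,\Q)$ and the $q^m$-power on $E$-schemes underlying $F$ on $\DM_\gm(E,\Q)$. The identity $F_{\Spec E}^m=\mathrm{id}_{\Spec E}$ is precisely what interpolates between them, and is the single substantive input into (a). Once (a) is secured on generators, the extension to all of $\DM_\gm(k,\Q)$ and the passage to (b) are purely formal consequences of the six-functor adjunction.
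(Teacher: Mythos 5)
Your argument is correct. For part a) you take essentially the paper's route: both proofs reduce, via the construction \eqref{eq2.2} (every object being, up to Tate twist, a direct summand of an object represented by a bounded complex of finite correspondences, all of which $f^*$ and the Frobenius respect), to $M=M(X)$ with $X$ smooth, where the identity between the $q^m$-power Frobenius of $X_E$ over $E$ and $(F_X\times_k F_{\Spec E})^m=F_X^m\times_k \mathrm{id}_{\Spec E}$ does the work; the paper leaves this computation as ``clear'' and you spell it out. For part b) your route is genuinely different: the paper again reduces to generators (``same reasoning'', i.e.\ one checks on $M=M(Y)$, $Y$ smooth over $E$, that $f_*F_{M(Y)}$ and $F_{f_*M(Y)}^m$ are both induced by the $q^m$-power map on $Y$), whereas you deduce b) purely formally from a) using the unit, counit and a triangle identity of $f^*\dashv f_*$ together with the naturality of $F$. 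Your derivation is valid --- the chain $f_*F_M=f_*F_M\circ f_*\varepsilon_M\circ\eta_{f_*M}=f_*\varepsilon_M\circ(f_*f^*g)^m\circ\eta_{f_*M}=g^m$ with $g=F_{f_*M}$ checks out --- and it has the advantage of applying to any adjunction between categories carrying Frobenius endomorphisms of the identity once a) is known, without needing the explicit description of $f_*$ on motives of smooth $E$-schemes; the paper's version is shorter but uses that description.
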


\begin{proof} a) Construction \eqref{eq2.2} shows that any object $N$ of $\DM_\gm^\eff(k,\Q)$ is isomorphic to a direct summand of an object represented by a bounded complex of finite correspondences. In turn, $M$ is represented by an object of the form $(N,i)$ for $N$ as above and $i\in \Z$. The functor $f^*$ respects these constructions. Thus the statement reduces to the case $M=M(X)$ where $X$ is a smooth variety, when it is clear. Same reasoning for b).
\end{proof}

%\begin{ex}[$l$-adic sheaves]\label{ex1.1} Let $\sF$ be an $l$-adic sheaf on $\Spec k$, and let $F$ be its Frobenius in the above understanding. By definition,
%\[F(X):\sF(X)\to \sF(X)\]
%is defined by the value of $\sF$ on $F_X$. By taking $X=\Spec \bar k$, we see that $F$ is given by the action of the \emph{arithmetic} Frobenius. This is the analogue of \cite[XV.2.2, Prop. 2]{SGA5}.
%\end{ex}

\subsection{A theorem of May} Let $\sT$ be a tensor triangulated category, i.e. a triangulated category provided with a symmetric monoidal structure which verifies axioms (TC1) -- (TC5) of May \cite[\S 4]{may}. Let $\un$ be the unit object of $\sT$. As in any symmetric monoidal category, an endomorphism $f$ of a strongly dualisable object $M$ has a \emph{trace} $\tr(f)\in \End_\sT(\un)$, defined as the composition
\begin{equation}\label{eq2.3}
\un \by{\eta} M\otimes M^*\by{f\otimes 1} M\otimes M^*\by{\sigma} M^*\otimes M\by{\epsilon}\un.
\end{equation}

Let $e$ be an endomorphism of the identity functor of $\sT$. Then
 
 \begin{thm}\label{t2.1} Let $M'\to M\to M''\by{+1}$ be an exact triangle in $\sT$. We have the equality
 \[\tr(e_M)=\tr(e_{M'})+\tr(e_{M''}).\]
 \end{thm}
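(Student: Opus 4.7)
The plan is to reduce the statement to May's additivity theorem for traces in tensor triangulated categories, which is precisely the purpose for which May formulated the axioms (TC1)--(TC5).

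First, since $e$ is a natural transformation of the identity functor $\mathrm{id}_\sT$, naturality applied to the two morphisms of the given exact triangle yields a commutative diagram
\[
\begin{CD}
M' @>>> M @>>> M'' \\
@Ve_{M'}VV @Ve_MVV @Ve_{M''}VV \\
M' @>>> M @>>> M''
\end{CD}
\]
whose rows are the same distinguished triangle. In other words, $(e_{M'},e_M,e_{M''})$ is an endomorphism of a distinguished triangle in $\sT$.

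This is precisely the input required by May's additivity theorem for traces \cite[Th.~1.9]{may}: for any endomorphism of a distinguished triangle of strongly dualisable objects in a tensor triangulated category satisfying (TC1)--(TC5), the trace of the middle endomorphism equals the sum of the traces of the two outer ones. Applying it gives $\tr(e_M)=\tr(e_{M'})+\tr(e_{M''})$, which is the desired equality.

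The real difficulty is not in this reduction but inside May's theorem itself: naturality alone does not force additivity of traces, because the three traces in \eqref{eq2.3} are a priori computed with respect to independently chosen duality data for $M'$, $M$, $M''$. May's argument uses axiom (TC5) (the compatibility between the braid and the triangulated structure) to construct \emph{coherent} duality data on the three objects fitting the given triangle, and to show that the resulting trace identity is independent of all choices. A minor implicit hypothesis is that $M'$ and $M''$ (not just $M$) be strongly dualisable, so that the right-hand side makes sense; this is automatic in the rigid $\otimes$-triangulated categories, such as $\DM_\gm(k,\Q)$, to which this result is applied in the sequel.
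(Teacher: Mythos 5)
There is a genuine gap: you have misquoted May's additivity theorem. Theorem 1.9 of \cite{may} does \emph{not} say that for every endomorphism $(\phi',\phi,\phi'')$ of a distinguished triangle one has $\tr(\phi)=\tr(\phi')+\tr(\phi'')$; it says that, given $\phi'$ and $\phi$ making the first square commute, there \emph{exists some} fill-in $\phi''$ for which additivity holds. The distinction is essential: the third map completing a morphism of triangles is not unique, different fill-ins can have different traces, and additivity genuinely fails for certain choices of fill-in (this is Ferrand's well-known counterexample to naive additivity of traces in derived categories; see also the discussion in Groth--Ponto--Shulman). In Theorem \ref{t2.1} the fill-in is \emph{prescribed}: it must be $e_{M''}$, the component of the natural transformation $e$ at $M''$. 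Citing May as a black box therefore only tells you that \emph{some} endomorphism of $M''$ compatible with $(e_{M'},e_M)$ has the right trace, not that $e_{M''}$ does. Your naturality diagram is correct but does not close this gap, and your closing paragraph locates the difficulty in the wrong place: the trace \eqref{eq2.3} of a fixed endomorphism is independent of the chosen duality data, so ``coherent duality data'' is not the issue; the issue is identifying the correct third map.

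The paper's proof addresses exactly this point. It notes that May proves additivity with the \emph{given} fill-in only for $e$ the identity (additivity of Euler characteristics), and then argues that this proof generalises: one inserts the maps $e_{(-)}$ into the key commutative diagram in the middle of p.~55 of \cite{may}, just below the $\gamma$'s, and checks that commutativity is preserved. This works precisely because $e$ is a natural endomorphism of the \emph{identity functor}, hence commutes with every morphism appearing in May's diagram (including the connecting map and all auxiliary maps), not merely with the two maps of the triangle. To repair your argument you would need to carry out this verification rather than invoke May's statement directly. Your observation that $M'$ and $M''$ must also be strongly dualisable is correct but immaterial in the intended applications, where $\sT$ is rigid.
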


\begin{proof} Although this is only stated in \cite{may} for $e$ the identity, May's proof for this special case directly generalises. More precisely, one can insert $e$'s in the diagram in the middle of \cite[p. 55]{may}, just below the occurence of the $\gamma$'s, without affecting its commutativity.
\end{proof}

\subsection{Zeta functions of endomorphisms} Let $\sA$ be a rigid additive $\otimes$-category, and let $N$ be an object of $\sA$. Let $\un$ be  the unit object of $\sA$ and let $K=\End(\un)$, assumed to be a field of characteristic $0$. In \cite[Def. 3.1]{modq}, we gave the following definition:

\begin{defn} \label{d2a}   For $f\in \End(M)$, its \emph{Z function} is
\[Z(f,t) = \exp(\sum_{n\ge 1}
\tr(f^n)\frac{t^n}{n})\in K[[t]]\]
where $\tr$ is the categorical trace described in the previous subsection.
\end{defn}

Following \cite[Def. 5.1]{modq}, we say that $\sA$ is \emph{of homological origin} if it is abelian semi-simple and if it is
$\otimes$-equivalent to $\sA'/\sN$, where $\sA'$ is a rigid $\otimes$-category admitting a strong $\otimes$-functor with values in the category of $\Z/2$-graded $L$-vector spaces for some extension $L$ of $K$,  and $\sN$ is the ideal of morphisms universally of trace $0$. By \cite[Th. 3.2, Rem. 3.3  and Th. 5.6]{modq}, we have

\begin{thm}\label{t2.3} Assume that $\sA$ is of homological origin. Then, for any $(N,f)$ as in Definition \ref{d2a}, $Z(f,t)\in K(t)$. If $f$ is invertible, we have the functional equation
\[Z(f^{-1},t^{-1})= (-t)^{\chi(M)} \det(f) Z(f,t)\]
where $\chi(M)=\tr(1_M)$ and $\det(f)$ is a certain element of $K^*$ which may be obtained by specialisation from the identity
\[\det(1-ft)= Z(f,t)^{-1}.\]
\end{thm}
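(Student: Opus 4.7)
The plan is to reduce to the case of $\Z/2$-graded vector spaces via the hypothesis of homological origin, and then invoke the classical identity relating traces and characteristic polynomials. First I would lift $(N,f)$ along the $\otimes$-equivalence $\sA \simeq \sA'/\sN$: pick $\tilde N\in \sA'$ mapping to $N$ and a lift $\tilde f\in \End(\tilde N)$ of $f$ (which exists because the projection is full). The lift $\tilde f$ is unique only modulo $\sN$, but since $\sN$ is a two-sided $\otimes$-ideal of universally trace-zero morphisms, expanding $(\tilde f+g)^n$ with $g\in \sN$ shows that every cross-term lies in $\sN$ and therefore has trace zero; hence $\tr_\sA(f^n)=\tr_{\sA'}(\tilde f^n)$ for all $n\ge 1$, and $Z(f,t)$ is unchanged when computed in $\sA'$.

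Next I would apply the strong $\otimes$-functor $H:\sA'\to \mathrm{sVec}_L$ to $\Z/2$-graded $L$-vector spaces. As a $\otimes$-functor, $H$ preserves strong duals and categorical traces, so writing $H(\tilde N)=V^+\oplus V^-$ with both summands finite-dimensional over $L$, one has the supertrace identity $\tr_{\sA'}(\tilde f^n) = \tr(f_+^n)-\tr(f_-^n)$, where $f_\pm := H^\pm(\tilde f)$. The classical formula $\exp(\sum_n \tr(g^n)t^n/n) = \det(1-gt)^{-1}$ then yields
\[
Z(f,t) = \frac{\det(1-f_-\,t)}{\det(1-f_+\,t)} \in L(t).
\]
Since by construction $Z(f,t)\in K[[t]]$, the standard equality $K[[t]]\cap L(t)=K(t)$ forces $Z(f,t)\in K(t)$.

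For the functional equation, assume $f$ invertible; then $f_\pm$ are automorphisms, and the elementary identity $\det(1-g^{-1}t^{-1}) = (-t)^{-\dim V}\det(g)^{-1}\det(1-gt)$ applied to each factor gives
\[
Z(f^{-1},t^{-1}) = (-t)^{\dim V^+ - \dim V^-}\,\frac{\det f_+}{\det f_-}\,Z(f,t).
\]
The super-dimension $\dim V^+ - \dim V^-$ equals $\tr(1_{H(\tilde N)}) = \tr(1_N) = \chi(M)$ by compatibility of traces with $H$ and with the quotient, and setting $\det(f):=\det f_+/\det f_-$ recovers the stated functional equation. That $\det(f)$ lies in $K^*$ (rather than merely $L^*$) follows from its intrinsic characterisation via specialisation of $\det(1-ft)=Z(f,t)^{-1}$: once $K$-rationality is established, the leading behaviour of $Z(f,t)^{-1}$ as $t\to\infty$ reads off $\det(f)$ directly in $K^*$.

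The main obstacle is the combination of the lifting step with the descent from $L$ to $K$: one must verify (i) that the $Z$-function is genuinely independent of the chosen lift $\tilde f$, which uses crucially that $\sN$ is a $\otimes$-ideal of \emph{universally} trace-zero morphisms (so that arbitrary compositions with elements of $\sN$ contribute nothing to the trace), and (ii) that passing to the fibre functor over $L$ does not enlarge the field of coefficients of $Z$, which is guaranteed formally by $K[[t]]\cap L(t)=K(t)$. Once these are in place, the whole argument is a formal manipulation of characteristic polynomials in super vector spaces.
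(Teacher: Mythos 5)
Your proof is correct and follows essentially the argument of the sources the paper cites for this theorem (\cite[Th.~3.2, 5.6]{modq}, \cite[Th.~A.41]{zetaL}) --- the paper gives no proof beyond those citations, and the intended one is exactly your reduction through the super fibre functor, the identity $\det(1-gt)^{-1}=\exp(\sum_n\tr(g^n)t^n/n)$, and descent of rationality via $K[[t]]\cap L(t)=K(t)$. One point in your favour worth recording: your normalisation $\det(f)=\det f_+/\det f_-$ is the one that actually makes the stated functional equation hold (test it on a one-dimensional even object with $f=a$: one needs $\det(f)=a$, not $a^{-1}$), so the explicit formula $\det(f)=\prod\alpha_i/\prod\beta_j$ reproduced just after the theorem has its numerator and denominator interchanged relative to the functional equation as stated.
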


In \cite[Th. A.41]{zetaL}, a slightly simpler proof of Theorem \ref{t2.3} is given, as well as a more explicit formula for $\det(f)$: if $Z(f,t)=\frac{\prod_{i=1}^m(1-\alpha_it)}{\prod_{j=1}^n(1-\beta_jt)}$ over the algebraic closure of $K$, then
\[\det(f) = \frac{\prod_{i=1}^m \alpha_i}{\prod_{j=1}^n \beta_j}.\]

\subsection{Number of points and zeta functions of pure motives} In \cite[pp. 80/81]{kleiman}, Kleiman defines the number of points and the Z function of effective homological motives, hence \emph{a fortiori} of an effective Chow motive $N=(X,p)$ where $X$ is a smooth projective $k$-variety and $p$ is a projector on $X$, by
\[N_s(N)=\langle F_X^s,{}^t p\rangle,\quad Z(N,t)=\exp(\sum_{s=1}^\infty N_s(N)\frac{t^s}{s})\]
where $\langle, \rangle$ is the intersection product.

For $f=F_N$, the Frobenius endomorphism of $N$, this expression coincides with that of Definition \ref{d2a} in view of the following lemma, which should have been in \cite{zetaL}.

\begin{lemma}\label{l2.4} a) Let $X,Y$ be two smooth projective varieties, and let $f\in \Corr(X,Y)$, $g\in \Corr(Y,X)$ be two correspondences. Then
\[\tr(g\circ f) = \langle {}^tf,g\rangle.\]
b) For $N=(X,p)$, $N_s(N)=\tr(F_N^s)$ and $Z(F_N,t)=Z(N,t)$.
\end{lemma}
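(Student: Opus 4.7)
My plan is to reduce both parts to explicit cycle-theoretic computations in $\Chow(k,\Q)$, using the fact that the rigid structure on $h(X)$ is given by the diagonal.

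The first step is the Lefschetz-style formula: for $X$ smooth projective of pure dimension $d$, the dual of $h(X)$ is $h(X)(d)$, and both the evaluation $\epsilon:h(X)(d)\otimes h(X)\to\un$ and the coevaluation $\eta:\un\to h(X)\otimes h(X)(d)$ are represented by the class of the diagonal $[\Delta_X]\in CH^d(X\times X)$. Unwinding the composition \eqref{eq2.3}, one obtains for any $h\in CH^d(X\times X)=\End(h(X))$ the identity
\[\tr(h)=\deg_{X\times X}(h\cdot\Delta_X)=\langle h,\Delta_X\rangle.\]
This identification (standard, see \cite{kleiman} or \cite[App.~A]{zetaL}) is the main technical ingredient; once it is granted, part (a) follows by a standard three-factor manipulation. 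Namely, apply it with $h=g\circ f=p_{13*}(p_{12}^*f\cdot p_{23}^*g)$ and use the projection formula to rewrite
\[\deg_{X\times X}((g\circ f)\cdot\Delta_X)=\deg_{X\times Y\times X}(p_{12}^*f\cdot p_{23}^*g\cdot p_{13}^*\Delta_X).\]
The cycle $p_{13}^*\Delta_X$ is supported on the graph $\delta_{12}(X\times Y)$ of $\delta_{12}:(x,y)\mapsto(x,y,x)$, and since $p_{12}\circ\delta_{12}=\mathrm{id}$ while $p_{23}\circ\delta_{12}$ is the swap $X\times Y\iso Y\times X$, this degree becomes $\deg_{X\times Y}(f\cdot {}^tg)$. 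Pulling back along the swap $Y\times X\iso X\times Y$ (which preserves degrees) finally rewrites it as $\deg_{Y\times X}({}^tf\cdot g)=\langle{}^tf,g\rangle$, giving (a).

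For part (b), I would apply (a) with $Y=X$, $f=p$ and $g=F_X^s$, obtaining
\[\tr(F_X^s\circ p)=\langle{}^tp,F_X^s\rangle=\langle F_X^s,{}^tp\rangle=N_s(N),\]
where the middle equality uses the symmetry of the intersection product on $X\times X$. It remains to identify $\tr(F_X^s\circ p)$ with $\tr(F_N^s)$. In a rigid pseudo-abelian $\otimes$-category, the trace of $\alpha\in\End(N)$ for a direct summand $N=(X,p)$ equals the trace on $h(X)$ of its lift $\tilde\alpha\in p\End(h(X))p\subset\End(h(X))$, and the map $\alpha\mapsto\tilde\alpha$ is a ring homomorphism because $r_Ni_N=1_N$. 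By the naturality of Frobenius (Section~\ref{s2.A}), $F_X$ commutes with $p$, so $F_N$ lifts to $pF_Xp=F_Xp$, whence $F_N^s$ lifts to $F_X^sp=F_X^s\circ p$. Therefore $\tr(F_N^s)=\tr(F_X^s\circ p)=N_s(N)$, and the equality $Z(F_N,t)=Z(N,t)$ follows by comparing the generating series $\exp(\sum_{s\geq 1}\tr(F_N^s)t^s/s)$ and $\exp(\sum_{s\geq 1}N_s(N)t^s/s)$ term by term.

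The main obstacle is the cycle-theoretic identification $\tr(h)=\langle h,\Delta_X\rangle$ of the first step: it requires a careful verification of the twist conventions for the rigid structure on $h(X)$, a check that the composite \eqref{eq2.3} really unwinds to intersection with the diagonal, and moreover that the intersections appearing in the pushforward/pullback diagram on $X\times Y\times X$ can be computed at the level of Chow classes (via refined intersection theory if transversality fails). All the remaining steps are either formal or follow from standard properties of the intersection pairing and the trace in rigid pseudo-abelian categories.
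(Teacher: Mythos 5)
Your proposal is correct and follows the same overall strategy as the paper --- reduce to an explicit cycle computation using the fact that the duality data on $h(X)$ is the diagonal, suitably twisted --- but the execution of part (a) differs. The paper does not prove the Lefschetz-type identity $\tr(h)=\langle h,\Delta_X\rangle$ and then unwind the composition $g\circ f$ on $X\times Y\times X$ as you do; instead it quotes the general categorical formula $\tr(g\circ f)={}^t(\iota_{AB}^{-1}f)\circ\iota_{BA}^{-1}g$ from Andr\'e--Kahn \cite[7.3]{akos}, identifies $\Hom(h(X),h(Y))$ with $CH^{\dim X}(X\times Y)\otimes\Q$ via $h(X)^*\simeq h(X)\otimes\L^{-\dim X}$, and then reads off $\langle{}^tf,g\rangle$ directly from the composition-of-correspondences formula. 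Your route is more self-contained (it needs only the explicit unit and counit, not the abstract trace identity) at the cost of the three-factor projection-formula manipulation, which is exactly the classical Lieberman--Kleiman computation; the twist-convention check you flag as the main obstacle is equally present in the paper's argument, where it is absorbed into the identification of the Hom-groups. Part (b) is handled identically in both proofs (the paper simply notes $F_N^s=p\circ F_X^s$ and applies (a) with $f=p$, $g=F_X^s$); your extra care in lifting traces from the summand $(X,p)$ to $h(X)$ makes explicit a point the paper leaves implicit.
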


\begin{proof} a) In any rigid $\otimes$-category $\sA$, we have the formula
\begin{equation}\label{eq2.4}
\tr(g\circ f) = {}^t(\iota_{AB}^{-1}f)\circ \iota_{BA}^{-1} g
\end{equation}
\cite[7.3]{akos}, where $f:A\to B$, $g:B\to A$ and $\iota_{AB}$ is the adjunction isomorphism $\Hom(\un,A^*\otimes B)\iso \Hom(A,B)$ (\emph{loc. cit.}, (6.2)) and similarly for $\iota_{BA}$. Applying this with $A=h(X)$, $B=h(Y)$ in $\Chow(k,\Q)$ and using the fact that $h(X)^*\simeq h(X)\otimes \L^{-\dim X}$, $h(Y)^*\simeq h(Y)\otimes \L^{-\dim Y}$ with $\L$ the Lefschetz motive, we get the usual formulas
\[\Hom(h(X),h(Y))\osi \Hom(\L^{\dim X},h(X\times Y)) =CH^{\dim X}(X\times Y)\otimes \Q\]
and similarly for $\Hom(h(Y),h(X))$. Thus, in the right hand side of \eqref{eq2.4}, $\iota_{BA}^{-1} g$ is simply $g$ viewed as a cycle class in $CH^{\dim Y}(Y\times X)\otimes \Q$ and ${}^t(\iota_{AB}^{-1}f)$ is $f$ viewed as a cycle class in $CH^{\dim X}(X\times Y)\otimes \Q$. By the formula for the composition of correspondences, this right hand side is $\langle {}^tf,g\rangle$.

b) We apply a) with $X=Y$, $f=p$, $g=F_X^s$, noting that $F_N^s=p\circ F_X^s$. 
\end{proof}

By Theorem \ref{t2.3} and the existence of a Weil cohomology, we get that $Z(N,t)$ is a rational function of $t$ and satisfies a functional equation of the form of this theorem. Adding the main result of \cite{WeilI}, we get a more precise result:

\begin{thm}\label{t2.4} Assume that $N$ is effective and is a direct summand of $h(X)$ where $X$ is a smooth projective variety of dimension $n$. Then the roots of the numerator and denominator of $Z(N,t)$ are effective Weil $q$-numbers of weights $\le 2n$.
\end{thm}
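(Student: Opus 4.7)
The plan is to reduce to Deligne's Weil I via the $\ell$-adic realisation. Fix a prime $\ell$ different from the characteristic of $k$. The $\ell$-adic realisation gives a $\otimes$-functor
\[
R_\ell : \Chow(k,\Q) \longrightarrow \mathrm{Vec}^{\Z/2}_{\Q_\ell}
\]
into the rigid $\otimes$-category of $\Z/2$-graded $\Q_\ell$-vector spaces (with continuous Galois action and the Koszul sign rule on the monoidal structure), sending $h(X)$ to $\bigoplus_i H^i(\bar X, \Q_\ell)$ graded by parity of $i$. Being a $\otimes$-functor between rigid $\otimes$-categories, $R_\ell$ preserves categorical traces of endomorphisms of dualisable objects; applied to $f = F_N^s$ this yields
\[
\tr(F_N^s) \;=\; \sum_{i=0}^{2n}(-1)^i \tr\bigl(F^s \,\big|\, R_\ell(N)^i\bigr),
\]
where $R_\ell(N)^i = p\, H^i(\bar X, \Q_\ell)$ is the direct summand of $H^i(\bar X, \Q_\ell)$ cut out by the action of the projector $p$ (via cycle class and cup product). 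Exponentiating in the usual way gives the factorisation
\[
Z(N,t) \;=\; \prod_{i=0}^{2n}\det\bigl(1 - Ft \,\big|\, R_\ell(N)^i\bigr)^{(-1)^{i+1}}.
\]

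The roots of the numerator and denominator of $Z(N,t)$ are therefore the reciprocals of the eigenvalues of $F$ on the summands $R_\ell(N)^i \subset H^i(\bar X, \Q_\ell)$. By Deligne's Weil I \cite{WeilI}, every eigenvalue of Frobenius on $H^i(\bar X, \Q_\ell)$ is an algebraic integer whose complex conjugates all have absolute value $q^{i/2}$, that is, an effective Weil $q$-number of weight $i$; the property is inherited by the direct summand $R_\ell(N)^i$. Since $\dim X = n$, $H^i(\bar X, \Q_\ell) = 0$ for $i > 2n$, so all weights occurring are $\le 2n$, which gives the claim.

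The main subtle point is the trace compatibility at the heart of the factorisation: that the categorical trace in $\Chow(k,\Q)$ of $F_N^s$ coincides with the supertrace of its image under $R_\ell$. This is automatic for a symmetric monoidal functor between rigid $\otimes$-categories (once the Koszul sign convention is put on the target), but as a reality check it can also be verified directly using Lemma \ref{l2.4}(a) together with the classical Grothendieck--Lefschetz trace formula for correspondences on $X \times X$. Everything else in the argument is formal, and effectivity is simply the integrality half of Weil I.
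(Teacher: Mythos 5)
Your argument is correct and is essentially the paper's own: the paper likewise deduces the statement by passing through a Weil cohomology (the $\ell$-adic realisation, with the trace compatibility supplied by Lemma \ref{l2.4} and Lemma \ref{l2.3}) and then invoking Deligne's theorem from \cite{WeilI}, so your write-up is just a more explicit version of the same reduction. The only cosmetic point is the usual convention slippage between the roots of the numerator/denominator and their reciprocals (the Weil numbers are the \emph{inverse} roots), a slippage the paper itself commits.
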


Here, a Weil $q$-number of weight $i$ is an element $\alpha\in \bar \Q$ such that $|\sigma(\alpha)| =q^i$ for any embedding $\sigma:\bar \Q\inj \C$; $\alpha$ is \emph{effective} if it is an algebraic integer.

\subsection{Covariance and contravariance}\label{s2.E} In \cite[Prop. 2.1.4]{voetri}, Voevodsky defined a functor
\[\Phi^\eff:\Chow^\eff(k,\Q)\to \DM_\gm^\eff(k,\Q)\]
which induces similar functors $\Phi:\Chow(k,\Q)\to \DM_\gm(k,\Q)$ and $\phi^\o:\Chow^\o(k,\Q)\to \DM_\gm^\o(k,\Q)$ where the last ones are the categories of birational motives defined in \cite{ks} and \cite{birat-tri}. These functors are covariant if one takes the \emph{covariant} convention on Chow motives: the ``graph'' functor is covariant. On the other hand, the computation of zeta functions using traces of powers of Frobenius in $\Chow(k,\Q)$ is done in the previous section with the \emph{contravariant} convention, which might conceivably be an issue.

More generally, if $\sA$ is a rigid $\otimes$-category, then the opposite\footnote{We prefer the term `opposite' to the older term `dual', which may cause confusion in this and other contexts.} category $\sA^\op$ (same objects, change the sense of morphisms) is also a rigid $\otimes$-category, but changing the sense of morphisms does not preserve the shape of \eqref{eq2.3}, which seems to create an issue. This is not the case:

\begin{lemma}\label{l2.3} Let $f:A\to A$ be an endomorphism of $A\in \sA$. Then $\tr_\sA(f)=\tr_{\sA^\op}(f)$.
\end{lemma}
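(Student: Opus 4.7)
The strategy is to unpack the definition of $\tr_{\sA^{\op}}(f)$ using an explicit choice of dual for $A$ in $\sA^{\op}$, translate the resulting composite back to $\sA$, and check that after cancelling an opposite pair of symmetries it reduces to the composite \eqref{eq2.3} defining $\tr_{\sA}(f)$.

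First I would fix duality data for $A$ in $\sA^{\op}$. The natural choice is the same object $A^*$, with unit and counit swapped and twisted by $\sigma$: the unit in $\sA^{\op}$ corresponds to the morphism $\epsilon_A\circ \sigma \colon A\otimes A^*\to \un$ of $\sA$, and the counit in $\sA^{\op}$ corresponds to the morphism $\sigma\circ \eta_A \colon \un\to A^*\otimes A$ of $\sA$. The triangle identities in $\sA^{\op}$ are equivalent, via naturality of the symmetry and $\sigma^2=1$, to those in $\sA$, so no new verification is needed. (Any two duality data for a given object give the same trace, so this choice is harmless.)

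Next, applying \eqref{eq2.3} in $\sA^{\op}$ to the endomorphism $f$ yields a four-step composite in $\sA^{\op}$. Reversing the direction of each arrow and the order of composition to translate back to $\sA$, and using that the opposite symmetry $\sigma^{\op}_{A,A^*}$ corresponds in $\sA$ to $\sigma_{A^*,A}=\sigma_{A,A^*}^{-1}$, the translated composite takes the shape
\[
\un \xrightarrow{\eta_A} A\otimes A^* \xrightarrow{\sigma} A^*\otimes A \xrightarrow{\sigma^{-1}} A\otimes A^* \xrightarrow{f\otimes 1} A\otimes A^* \xrightarrow{\sigma} A^*\otimes A \xrightarrow{\epsilon_A} \un.
\]
The adjacent $\sigma$ and $\sigma^{-1}$ collapse to the identity, so this is precisely the composite defining $\tr_{\sA}(f)$.

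The only real obstacle is the bookkeeping of arrow directions and composition order when passing between $\sA$ and $\sA^{\op}$; once the duality data on the $\sA^{\op}$ side are set up correctly, the identity drops out of the involutivity of $\sigma$.
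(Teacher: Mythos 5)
Your proof is correct, but it takes a different route from the paper's. You unwind the definition directly: you equip $A$ with the explicit duality data $(\,\epsilon_A\circ\sigma,\ \sigma\circ\eta_A\,)$ in $\sA^\op$, translate the composite \eqref{eq2.3} back to $\sA$, and cancel $\sigma\circ\sigma^{-1}$; the two points you rightly flag (that the swapped data satisfy the triangle identities in $\sA^\op$, and that the trace is independent of the choice of duality data) are both standard and make the argument complete and self-contained. The paper instead argues in two steps: the contravariant dualisation $A\mapsto A^*$, $f\mapsto{}^tf$ is a covariant strong $\otimes$-functor $\sA\to\sA^\op$, hence preserves traces, so $\tr_\sA(f)=\tr_{\sA^\op}({}^tf)$; it then cites the identity $\tr({}^tf)=\tr(f)$ from Andr\'e--Kahn. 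Your computation is essentially what underlies that cited identity, so you gain independence from the reference at the cost of some bookkeeping; the paper's phrasing has the advantage of isolating the functoriality of the trace under strong $\otimes$-functors, which is exactly the moral it draws in the sentence following the lemma.
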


\begin{proof} Consider the covariant strong $\otimes$-functor $\sA\to \sA^\op$ given by $A\mapsto A^*$: it sends $f:A\to B$ to its transpose ${}^t f:B^*\to A^*$. Thus, for $B=A$,
\[\tr_\sA(f) = \tr_{\sA^\op}({}^t f).\]

But $\tr_{\sA^\op}({}^t f)=\tr_{\sA^\op}(f)$ \cite[p. 151]{akos}.
\end{proof}

This lemma shows that the categorical trace commutes with strong $\otimes$-functors, be they covariant or contravariant.

\subsection{Number of points %and zeta functions 
of geometric motives} As seen in \S \ref{s2.A}, the identity functor of $\DM_\gm(k,\Q)$ has a canonical $\otimes$-endomorphism: the Frobenius endomorphism, that we denote by $F$.
For any $M\in \DM_\gm(k,\Q)$, we write $F_M$ for the corresponding endomorphism of
$M$.

\begin{defn}\label{d1}For $n\in \Z$, $\sharp_n(M)=\tr(F_M^n)$; for $n=1$ we set $\sharp_1=\sharp$.
%b) The \emph{$Z$-function of $M$} is
%\[Z(M,t) = \exp(\sum_{n=1}^\infty \sharp_n(M)\frac{t^n}{n}).\]
\end{defn}

\begin{thm}\label{t2.2} a) For each $n\in \Z$, $\sharp_n$ is an Euler-Poincar\'e characteristic
and defines a ring homomorphism
\[\sharp_n:K_0(\DM_\gm(k,\Q))\to\Q.\]
b) $\sharp_n$ takes values in $\Z[1/q]$; for $n\ge 1$ its restriction to $\DM_\gm^\eff(k,\Q)$ takes values in $\Z$, and induces a ring homomorphism
\[\overline\sharp_n:K_0(\DM_\gm^\o(k,\Q))\to \Z/q^n\]
where $\DM_\gm^\o(k,\Q)$ is the category of triangulated birational motives \cite{ks}.\\
c) We have the identities:
\[\sharp_n(M[1]) = -\sharp_n(M);\qquad \sharp_n(M(1)) = q^n\sharp_n(M).\]
d) If $X$ is smooth projective, then $\sharp_n(M(X))=|X(\F_{q^n})|$ for all $n\ge 1$, and $\sharp_0(M(X))$ is the Euler characteristic of $X$..
 %$Z(M(X),t)=Z(X,t)$ (the Z-function of $X$).
\end{thm}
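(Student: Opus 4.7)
My plan is to prove the four statements roughly in the order (a), (c), (d), (b), since each successive step leans on the preceding one.

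For (a), additivity of $\sharp_n$ on exact triangles is immediate from Theorem \ref{t2.1} applied to the natural endomorphism $F^n$ of the identity functor of $\DM_\gm(k,\Q)$; this gives the factorisation through $K_0(\DM_\gm(k,\Q))$. To get a ring homomorphism I invoke the fact that in any rigid $\otimes$-category the categorical trace is multiplicative with respect to tensor products: since $F$ is a $\otimes$-endomorphism of the identity, $F_{M\otimes N}^n = F_M^n\otimes F_N^n$, whence $\sharp_n(M\otimes N)=\sharp_n(M)\sharp_n(N)$. Normalisation on $\un$ is $\sharp_n(\un)=\tr(\mathrm{id}_{\un})=1$.

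For (c), the two formulas are formal consequences of standard identities for traces in tensor triangulated categories: $\tr(f[1])=-\tr(f)$ for the shift, and $\tr(f\otimes g)=\tr(f)\tr(g)$ applied to $M(1)=M\otimes\Q(1)$, combined with Example \ref{ex2.1} which identifies $F_{\Q(1)}$ with multiplication by $q$. (In particular $\sharp_n(\Q(1))=q^n\cdot\chi(\Q(1))=q^n$, since $\chi(\Q(1))=1$.)

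For (d), I reduce to Lemma \ref{l2.4}. The covariant functor $\Phi:\Chow(k,\Q)\to \DM_\gm(k,\Q)$ sends $h(X)$ to $M(X)$ and intertwines the Frobenius endomorphisms on both sides. Lemma \ref{l2.3} ensures that the categorical trace is insensitive to the covariant/contravariant switch, so $\sharp_n(M(X))=\tr_{\Chow(k,\Q)}(F_{h(X)}^n)$. By Lemma \ref{l2.4}(b) this coincides with $N_n(h(X))=|X(\F_{q^n})|$. For $n=0$, $\sharp_0(M(X))=\chi(M(X))$, and applying the $l$-adic realisation (which is a $\otimes$-functor and therefore preserves traces of identities) identifies this with $\sum_i(-1)^i\dim H^i(\bar X,\Q_l)=\chi(X)$.

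Finally, for (b), I use that by Theorem \ref{p1.1} the category $\DM_\gm(k,\Q)$ is generated, as a thick triangulated $\otimes$-category, by motives $M(X)$ of smooth projective $k$-varieties and by the Tate twist; additivity from (a), together with (c) and (d), immediately shows $\sharp_n$ takes values in $\Z[1/q]$, and in $\Z$ on $\DM_\gm^\eff(k,\Q)$ when $n\ge 1$ (since no negative Tate twists intervene). To get the reduction mod $q^n$ factoring through $K_0(\DM_\gm^\o(k,\Q))$, I recall that $\DM_\gm^\o(k,\Q)$ is the Verdier quotient of $\DM_\gm^\eff(k,\Q)$ by the thick subcategory generated by objects of the form $N(1)$; by (c), $\sharp_n$ sends this subcategory into $q^n\Z$, so its reduction mod $q^n$ kills this subgroup of $K_0$ and descends to a ring homomorphism $\overline{\sharp}_n:K_0(\DM_\gm^\o(k,\Q))\to \Z/q^n$.

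The main obstacle I anticipate is (d): linking the abstract categorical trace of Frobenius on the Voevodsky motive $M(X)$ with the Grothendieck–Lefschetz count $|X(\F_{q^n})|$. The key bridge is Lemma \ref{l2.4}, but it is phrased for \emph{contravariant} Chow motives, so I must carefully invoke Lemma \ref{l2.3} and the covariance of $\Phi$ to transfer the computation. Once this is done, the rest is formal manipulation of traces.
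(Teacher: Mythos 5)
Parts (a), (c) and (d) are correct and follow essentially the same route as the paper: additivity from Theorem \ref{t2.1}, multiplicativity from $F_{M\otimes N}=F_M\otimes F_N$, the shift and twist identities from the triangle $M\to 0\to M[1]$ and Example \ref{ex2.1}, and the point count via $\Phi$, Lemma \ref{l2.3} and Lemma \ref{l2.4}.

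Part (b), however, has a genuine gap. Your dévissage argument --- ``$\DM_\gm(k,\Q)$ is generated as a thick triangulated category by the $M(X)$ and Tate twists, so additivity plus (c) and (d) immediately gives integrality'' --- breaks down precisely at the word \emph{thick}: the category is the idempotent completion of the triangulated subcategory generated by the $M(X)(r)$, and integrality of a rational number does not pass to direct summands. If $M\oplus M'\simeq N$ with $\sharp_n(N)\in\Z$, you only learn $\sharp_n(M)+\sharp_n(M')\in\Z$. This is exactly the ``devil in the idempotents'' the paper flags before Theorem \ref{p1}. Moreover $K_0$ of a triangulated category injects into, but need not surject onto, $K_0$ of its idempotent completion (Thomason), so you cannot even write $[M]$ as a virtual sum of classes of objects you control. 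The same objection applies to your descent of $\overline\sharp_n$ to $K_0(\DM_\gm^\o(k,\Q))$: identifying that $K_0$ with the expected quotient is not formal. Finally, even granting a reduction to Chow motives, integrality of $\tr(F_{(X,p)}^n)=\langle F_X^n,{}^tp\rangle$ for a summand $(X,p)$ is not a consequence of (d), since $p$ is only a $\Q$-coefficient projector. The paper's proof supplies exactly the two missing inputs: Bondarko's theorem that $K_0(\Chow(k,\Q))\to K_0(\DM_\gm(k,\Q))$ (and its effective and birational variants) is \emph{bijective}, which reduces everything to virtual Chow motives, and the integrality and congruence theorems for Chow motives of \cite[Th. 8.1 and 9.1]{modq}, which are themselves nontrivial cohomological results. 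You would need to invoke both to repair (b).
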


Here the groups $K_0$ considered are those of triangulated categories as in \cite[VIII.2]{SGA5}.

\begin{proof} a) Multiplicativity is a general fact for rigid tensor
categories (use that $F_{M\otimes N}=F_M\otimes F_N$). Additivity follows from Theorem \ref{t2.1}. 

b) Bondarko has proven that the maps
\[K_0(\Chow^\eff(k,\Q))\by{K_0(\Phi^\eff)} K_0(\DM_\gm^\eff(k,\Q))\]
\begin{equation}\label{eq2.1}
K_0(\Chow(k,\Q))\by{K_0(\Phi)} K_0(\DM_\gm(k,\Q))
\end{equation}
induced by the corresponding functors  are bijective \cite[Th. 6.4.2 and Cor. 6.4.3]{bondarko1}. Here, the left groups are $K_0$ of \emph{additive} categories. The same holds for the homomorphism
\[K_0(\Chow^\o(k,\Q))\by{K_0(\Phi^\o)} K_0(\DM_\gm^\o(k,\Q))\]
by \cite[Prop. 8.1.1]{bondarko}. The result then follows from \cite[Th. 8.1 and 9.1]{modq}.

c) The first identity is a special case of a) (consider the exact triangle $M\to 0\to M[1]$).
The second one follows from multiplicativity and the formula $\sharp_n(\Z(1))=q^n$ (Example \ref{ex2.1}).

d) We use the strong $\otimes$-functor $\Phi:\Chow(k,\Q)\to \DM_\gm(k,\Q)$ as in b): it sends the Chow motive $h(X)$ to $M(X)$.  The conclusion follows %since $Z(h(X),t)=Z(X,t)$ 
by Lemma \ref{l2.3}. %and \ref{l2.4} b).
\end{proof}

\begin{lemma}\label{l2.2} Let $E,f,m$ be as before Lemma \ref{l2.2a}.   For $M\in \DM_\gm(E,\Q)$ and $n> 0$, one has
\[\tr(f_*F_M^n) =
\begin{cases}
0 &\text{if $m\nmid n$}\\
\tr(F_M^{n/m}) &\text{if $m\mid n$.}
\end{cases}
\]
For $n=0$, we have $\chi(f_*M) = [E:k]\chi(M)$.
\end{lemma}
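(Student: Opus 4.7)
The plan is to reduce the identity to the case $M = M_E(X)$ for $X$ smooth projective over $E$ by additivity of the categorical trace, and then to verify it by a Galois-theoretic count of points, using Lemma~\ref{l2.2a} as the bridge between the $q$-Frobenius on $k$-schemes and the $q^m$-Frobenius on $E$-schemes.

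First I would set up the reduction. Since $F$ is an endomorphism of the identity functor of $\DM_\gm(E,\Q)$, for any distinguished triangle $M'\to M\to M''\by{+1}$ the morphisms $F_{M'}^n, F_M^n, F_{M''}^n$ assemble into a morphism of triangles; the triangulated functor $f_*$ preserves this, and Theorem~\ref{t2.1} gives additivity of $\tr(f_*F_M^n)$ in $M$. The right-hand side is likewise additive where defined, and both sides are compatible with Tate twists (via Example~\ref{ex2.1} and Theorem~\ref{t2.2}~c)) and with shifts. By Theorem~\ref{p1.1}, $\DM_\gm(E,\Q)$ is generated as a thick, idempotent-complete tensor-triangulated category by motives of smooth projective $E$-varieties; combined with Bondarko's theorem cited in Theorem~\ref{t2.2}~b), this reduces the verification to the case $M = M_E(X)$ with $X$ smooth projective.

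Second, for such $M$ I would exploit the identification $f_*M_E(X) = M_k(X)$, where on the right $X$ is viewed as a $k$-scheme by composition with $f$. Lemma~\ref{l2.2a}~b) gives $f_*F_M = F_{f_*M}^m$, so
\[\tr(f_*F_M^n) = \tr(F_{f_*M}^{mn}) = |X(\F_{q^{mn}})|_k,\]
the last equality via Theorem~\ref{t2.2}~d) applied in $\DM_\gm(k,\Q)$. The combinatorial step is then: any $k$-morphism $\Spec\F_{q^N} \to X$ restricts along the structure map $X\to \Spec E$ to a $k$-algebra map $E \to \F_{q^N}$, which exists iff $m\mid N$; when $m\mid N$ there are exactly $m$ such embeddings, transitively permuted by $\Gal(\F_{q^N}/k)$, and fixing one trivialises the fibres of the projection. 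This gives $|X(\F_{q^N})|_k = m\cdot |X(\F_{q^N})|_E$ when $m\mid N$ and $0$ otherwise; identifying $|X(\F_{q^N})|_E = \tr(F_M^{N/m})$ by Theorem~\ref{t2.2}~d) in $\DM_\gm(E,\Q)$ (using $|E|^{N/m} = q^N$) yields the claim, and the case $n=0$ specialises to $\chi(f_*M) = [E:k]\chi(M)$.

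The main obstacle will be the careful tracking of conventions: the $q$-Frobenius on $k$-schemes and the $q^m$-Frobenius on $E$-schemes differ by a factor of $m$ in exponent, so the identity $f_*F_M = F_{f_*M}^m$ from Lemma~\ref{l2.2a}~b) must be applied precisely in the counting step. A subsidiary point is verifying that the case-split by divisibility of $n$ by $m$ is preserved under the additive dévissage along triangles — which it is, since $m$ is fixed throughout the base change $f:\Spec E \to \Spec k$.
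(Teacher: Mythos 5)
Your reduction and your point count are reasonable ingredients, but the final step does not prove the stated formula, and the discrepancy is not just bookkeeping. You correctly compute $\tr(f_*F_M^n)=\tr(F_{f_*M}^{mn})$ and then invoke your counting identity with $N=mn$ --- but $mn$ is always divisible by $m$, so the ``$0$ if $m\nmid N$'' branch of your dichotomy is never reached, and what your chain of equalities actually yields is $\tr(f_*F_M^n)=|X(\F_{q^{mn}})|_k=m\,|X(\F_{q^{mn}})|_E=m\,\tr(F_M^{n})$ for \emph{every} $n>0$. The dichotomy in the lemma concerns the quantity $\sharp_n(f_*M)=\tr(F_{f_*M}^{n})$ (this is what is needed in Lemma \ref{l3.1}, in Lemma \ref{ltwisttr}, and for the proposition $Z(f_*M,t)=Z(M,t^m)$), i.e.\ your count run with $N=n$ rather than $N=mn$; and when $m\nmid n$ the endomorphism $F_{f_*M}^{n}$ is not of the form $f_*(\text{anything})$, so Lemma \ref{l2.2a}~b) gives no reduction to a computation over $E$ --- producing that vanishing is precisely the nontrivial content of the lemma. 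The paper gets it for arbitrary $M$, with no d\'evissage, by passing to $\tr(f^*f_*(-))$ (legitimate since $f^*$ is monoidal), decomposing $f^*f_*M\simeq\bigoplus_{r=0}^{m-1}(\phi_E^r)^*M$ as for induced representations, and observing that the relevant endomorphism is a cyclic rotation of the summands by relative Frobenii whose $m$-fold composite is $F_M$; its $n$-th power has zero diagonal unless $m\mid n$.

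Two further points. First, your own counting identity $|X(\F_{q^N})|_k=m\,|X(\F_{q^N})|_E$ forces a factor $[E:k]=m$ in front of $\tr(F_M^{N/m})$; this is consistent with the $n=0$ clause and with the way the lemma is later used ($\tr(\phi_E^{m})=m$ in the proof of Lemma \ref{ltwisttr}, and the functoriality $Z(f_*M,t)=Z(M,t^m)$), so you should not assert that the displayed $n>0$ formula, which lacks that factor, ``follows'' from your computation --- it does not, and noticing this tension is part of getting the statement right. Second, the d\'evissage to $M=M_E(X)$ with $X$ smooth projective is not justified by thick generation alone: both sides are additive invariants, hence controlled by $K_0$, and direct summands are exactly what triangle-additivity does not control (the ``devil is in the idempotents'' issue the paper raises before Theorem \ref{p1}). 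You would need to work with Chow motives with projectors, using Lemma \ref{l2.4}, or better, argue for arbitrary $M$ as the paper does.
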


\begin{proof}  Following a hint of J. Ayoub, we do as for induced representations:  as $f^*$ is monoidal, we have $\tr(f_*F_M^n)=\tr(f^*f_*F_M^n)$. Write $f^*f_*M=\bigoplus_{r=0}^{m-1} (\phi_E^r)^*M$, where $\phi_E$ is the Frobenius generator of $\Gamma=Gal(E/k)$. Then the matrix of $f^*f_*F_M$ relatively to this decomposition is
\[\begin{pmatrix}
0 & 0&\dots &0& (\phi_E^{m-1})^*\phi\\
\phi & 0& \dots&0& 0\\
0 & \phi_E^*\phi &\dots&0& 0\\
&&\ddots\\
0&\dots& &(\phi_E^{m-2})^*\phi&0
\end{pmatrix}\]
where $\phi:M\iso \phi_E^*M$ is a relative Frobenius morphism; moreover we have the relation
\[(\phi_E^{m-1})^*\phi\circ\dots\circ \phi_E^*\phi\circ \phi=F_M.\]

This can be checked on $M=M(Y)$, $Y$ a smooth $E$-scheme. The conclusion follows. 
\end{proof}

\subsection{Two twisting lemmas}\label{s2.F} In the next subsection, we shall need the following generalisation of a well-known lemma (cf. A.
Pacheco \cite[p. 283]{pacheco}: the idea seems to go very far back). We recall the notion of twisting by a $1$-cocycle from \cite[I.5.3 and III.1.3]{cg}.

Let $U$ be a quasi-projective $k$-variety, and let $G$ be a finite group of order $m$ acting on $U$ on the right. Then the geometric quotient $V=U/G$ exists. For each $\sigma\in G$, we define a $k$-variety $U^{(\sigma)}$ mapping to $V$ as follows:

Let $E/k$ be ``the'' extension of $k$ of order $m$, and $\Gamma=Gal(E/k)$. %$\pi:\Spec E\to \Spec k$ be the corresponding morphism. 
Then $\Gamma\times G$ acts on $U_E$. %is an étale covering and a torsor under
  Let $\phi$ be the arithmetic Frobenius of $k$, $\phi_E$ its image in $\Gamma$ and $H_\sigma = \langle (\phi_E^{-1},\sigma)\rangle$; we set $U^{(\sigma)}=U_E/H_\sigma$. The projection $U_E\to U\to V$ is $H_\sigma$-invariant, which defines $f^{(\sigma)}:U^{(\sigma)}\to V$. Write also $\pi_\sigma$ for the projection $U_E\to U^{(\sigma)}$. %defines the subcovering $f^{(\sigma)}:U^{(\sigma)}\to V$%, with $f^{(\sigma)}$ \'etale.

\begin{lemma}\label{l2.5} We have $U^{(1)}=U$, we write $f:=f^{(1)}$. For a general $\sigma$, the square
\[\begin{CD}
U_E@>\pi_\sigma>> U^{(\sigma)}\\
@VVV @VVV\\
\Spec E@>>> \Spec k
\end{CD}\]
is Cartesian; in particular, $\pi_\sigma$ is a Galois (étale) covering of group $\Gamma$.
\end{lemma}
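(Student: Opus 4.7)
My plan is to treat $\sigma=1$ separately, then for general $\sigma$ show that $\pi_\sigma$ is an étale $\Gamma$-torsor obtained from $\Spec E \to \Spec k$ by base change along $U^{(\sigma)} \to \Spec k$. The key observation is that $H_\sigma$ acts freely on $U_E$ because, via a natural isomorphism $H_\sigma \iso \Gamma$, its action on the factor $\Spec E$ is the standard Galois action (which is free).

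First I dispose of $\sigma=1$: then $H_1 = \langle(\phi_E^{-1},1)\rangle = \Gamma \times \{1\}$, acting on $U_E = U \times_k \Spec E$ only through the second factor. By faithfully flat Galois descent along $E/k$, $U_E/H_1 = U \times_k (\Spec E/\Gamma) = U$, and the induced map $f^{(1)}$ is $f$.

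For general $\sigma$, the generator $(\phi_E^{-1}, \sigma)$ has order $\operatorname{lcm}(m, \operatorname{ord}(\sigma))$, which equals $m$ since $\operatorname{ord}(\sigma)$ divides $|G|=m$. Hence $|H_\sigma| = m$, and the first-projection homomorphism $p_1: H_\sigma \to \Gamma$ carries the generator to $\phi_E^{-1}$, which generates $\Gamma$; as both groups have order $m$, $p_1$ is an isomorphism. Through $p_1$, the $H_\sigma$-action on the $\Spec E$-factor of $U_E$ is identified with the standard $\Gamma$-action, which is free; therefore $H_\sigma$ acts freely on all of $U_E$, and the quotient map $\pi_\sigma:U_E\to U^{(\sigma)}$ is an étale $H_\sigma$-torsor — equivalently, via $p_1$, an étale $\Gamma$-torsor.

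Finally, the structure morphism $U_E \to \Spec E$ is $H_\sigma$-equivariant once $H_\sigma$ acts on $\Spec E$ through $p_1$, so it descends to a morphism $U^{(\sigma)} \to \Spec E/\Gamma = \Spec k$, producing the displayed square. The canonical comparison map $\alpha: U_E \to U^{(\sigma)} \times_{\Spec k} \Spec E$ is then a morphism of étale $\Gamma$-torsors over $U^{(\sigma)}$ — the target being a $\Gamma$-torsor by base change — and any such morphism is an isomorphism. This yields the Cartesian square, and the closing assertion on $\pi_\sigma$ being a Galois étale cover of group $\Gamma$ is an immediate restatement. The only delicate point I foresee is the identification $p_1: H_\sigma \iso \Gamma$, which is precisely what makes the two $\Gamma$-structures align so that the torsor comparison applies; everything else is formal torsor bookkeeping.
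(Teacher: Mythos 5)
Your proof is correct and follows essentially the same route as the paper: the paper checks the Cartesian property by faithfully flat descent after base change to $\Spec E$, where the square trivialises, and your ``morphism of $\Gamma$-torsors is an isomorphism'' step is exactly that descent argument in disguise. You usefully make explicit the two points the paper compresses into ``by construction'' --- that $p_1\colon H_\sigma\to\Gamma$ is an isomorphism (since $\operatorname{ord}(\sigma)\mid m$) and that $H_\sigma$ therefore acts freely on $U_E$ through the $\Spec E$-factor --- so nothing is missing.
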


\begin{proof} By faithfully flat descent \cite[Cor. VIII.5.4]{SGA1}, it suffices to see this after base-changing to $\Spec E$. Then the left column becomes $\Gamma\times U_E \to \Gamma\times \Spec E$. But, by construction, there exists a $k$-isomorphism $(U^{(\sigma)})_E\simeq U_E$ which converts $(\pi_\sigma)_E$ into the projection $\Gamma\times U_E\to U_E$ given by the canonical map $\Gamma\to *$. Hence the conclusion.
\end{proof}

\begin{lemma}\label{ltwist} Suppose that $f$ is a $G$-torsor (i.e. an étale Galois covering). Then we have
\[\frac{1}{m}\sum_{\sigma\in G}|U^{(\sigma)}(k)| = |V(k)|. \]
\end{lemma}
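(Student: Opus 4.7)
My plan is to reduce the lemma to the identity
\[|U^{(\sigma)}(k)| = N_\sigma := |\{u \in U(\bar k) : \phi(u) = u \cdot \sigma^{-1}\}|\]
for each $\sigma \in G$, where $\phi$ is the Frobenius of $\bar k/k$. Granting this, since $f : U \to V$ is a $G$-torsor, a point $u \in U(\bar k)$ satisfies $\phi(u) = u\sigma^{-1}$ for some $\sigma\in G$ if and only if $\phi(u)$ lies in the $f$-fiber of $u$, i.e., $f(u) \in V(k)$; and then the corresponding $\sigma$ is unique. Summing over $\sigma\in G$ therefore gives
\[\sum_{\sigma \in G}|U^{(\sigma)}(k)| = |\{u \in U(\bar k) : f(u) \in V(k)\}| = m \cdot |V(k)|,\]
because every geometric fiber of $f$ has cardinality $m=|G|$. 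Dividing by $m$ then yields the lemma.

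To prove the identity I would work on $\bar k$-points using the definition $U^{(\sigma)} = U_E/H_\sigma$. Fix an embedding $i_0:E\hookrightarrow \bar k$; this identifies $\Hom_k(E,\bar k)$ with $\Gamma$ via $\gamma \leftrightarrow i_0\circ\gamma$, so that $U_E(\bar k) = U(\bar k)\times \Gamma$. A direct check shows the generator $(\phi_E^{-1},\sigma)$ of $H_\sigma$ acts on this set by $(u,\gamma)\mapsto (u\sigma,\gamma\phi_E^{-1})$, freely of order $m$, while Frobenius acts by $\phi\cdot (u,\gamma) = (\phi(u),\phi_E\gamma)$; these two actions commute. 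A $H_\sigma$-orbit of $(u,\gamma)$ is Frobenius-stable exactly when $\phi\cdot(u,\gamma) = (\phi_E^{-1},\sigma)^j\cdot(u,\gamma)$ for some $j$. Comparing the $\Gamma$-components forces $j\equiv -1\pmod m$, after which the $U(\bar k)$-components give $\phi(u) = u\sigma^{-1}$---a $H_\sigma$-invariant condition. Hence the number of Frobenius-stable orbits is $mN_\sigma/m = N_\sigma$, establishing the identity.

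The delicate point will be pinning down the conventions for the $\Gamma$-action on $\Hom_k(E,\bar k)$ and for the induced $H_\sigma$-action on $U_E(\bar k)$; once these are fixed, the rest is elementary combinatorics of finite group actions. An alternative organisation would be to argue one fiber at a time: for each $v\in V(k)$, note that $(f^{(\sigma)})^{-1}(v) = (U_v)_E/H_\sigma$ is a twist of the $G$-torsor $U_v$, and show directly that $\sum_{\sigma\in G} |(f^{(\sigma)})^{-1}(v)(k)| = m$; summing over $v\in V(k)$ yields the same conclusion, perhaps with slightly cleaner bookkeeping since one only has to handle the trivialisation of a single $G$-torsor over $k$ at a time.
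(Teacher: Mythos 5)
Your proof is correct and rests on the same key identification as the paper's: a $k$-point of $U^{(\sigma)}$ is the same as a point $u\in U(\bar k)$ with $\phi(u)=u\sigma^{-1}$ (in the paper's notation, $y=\phi y\,\sigma$). The only real difference is in the final count: you sum globally over all $u\in U(\bar k)$ lying over $V(k)$, each of which satisfies the condition for exactly one $\sigma$, so that $\sum_\sigma |U^{(\sigma)}(k)|=m|V(k)|$ at once, whereas the paper counts fibre by fibre, decomposing $f^{-1}(x)$ according to the conjugacy class $\bar\sigma(x)$ and the centralisers $Z_{G}(\sigma)$; your bookkeeping is slightly cleaner, and you also make explicit, via the quotient description $U_E/H_\sigma$ and the orbit count, the identification $|U^{(\sigma)}(k)|=N_\sigma$ that the paper leaves implicit.
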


\begin{proof} % Define $U^{(\sigma)}$ as the twist of $U$ with respect to the $1$-cocycle $\alpha_\sigma\in Z^1(k,G)=Z^1(G_k,G(\bar k))$ which maps $\phi$ to $\sigma$. Properties (i) and (ii) are immediate.

%
%By construction,$f^{(\sigma)}$ is \'etale,  $U^{(\sigma)}$ is smooth with field of constants $k$, and $U^{(1)}=U$. Property (ii) is immediate from the twist description.

We have
\[V(k)=V(\bar k)^\phi.\]

The map $f:U(\bar
k)\to V(\bar k)$ is surjective and $G$ acts simply transitively on its fibres. 
\begin{comment}
The diagram
\[\begin{CD}
\bar U @>G_k>> U\\
@V{G}VV @V{G}VV\\
\bar V@>G_k>> V
\end{CD}\]
shows that the pro-\'etale cover $\bar U\to V$ is Galois, with group $\Gamma$ which sits in a split extension
\[1\to G\to \Gamma\to G_k\to 1.\]
\end{comment}
Let $x\in V(k)$. Pick $y\in U(\bar k)$ mapping to $x$. Then $\phi^{-1}y$ is in the fibre of
$x$, which implies that $y=\phi y\sigma $ for a unique $\sigma\in G$. Thus $y$
defines a $k$-rational point of $U^{(\sigma)}$. If $y'\in f^{-1}(x)$ is another point, then
$y'=y\tau $ for a unique $\tau\in G$, and 
\[y'=\phi y\sigma\tau =\phi y'\tau^{-1}\sigma\tau \]
%\[y'=\tau\phi\sigma y=\phi\tau\sigma y =\phi\tau\sigma\tau^{-1} y' = \tau\sigma\tau^{-1}\phi y'\]
so $y'\in U^{(\tau^{-1}\sigma\tau)}(k)$, and $y'\in U^{(\sigma)}(k)$ if and only if $\tau\in
Z_{G}(\sigma)$.

Summarising: there exists a well-defined conjugacy class $\bar \sigma(x)\subset G$ such that
$f^{-1}(x)$ is a disjoint union of subsets $f^{-1}(x)_\sigma$ consisting of $r$ elements of
$U^{(\sigma)}(k)$ for $\sigma$ running through $\bar \sigma(x)$, where $r=|Z_{G}(\sigma)|=|G|/|\bar \sigma(x)|$. If $\bar \sigma$ is now a given conjugacy class, let 
\[V(k)_{\bar\sigma}=\{x\in V(k)\mid \bar \sigma(x)=\bar \sigma\} \; \text{(possibly empty!)}.\] 

Then, for every $\sigma\in \bar \sigma$, the map $U^{(\sigma)}(k)\to V(k)_{\bar \sigma}$ is a torsor under
$Z_{G}(\sigma)$. In particular, $|U^{(\sigma)}(k)| = r|V(k)_{\bar \sigma}|$ and 
\[\sum_{\sigma\in \bar\sigma} |U^{(\sigma)}(k)| = |\bar \sigma|r |V(k)_{\bar \sigma}| =
m|V(k)_{\bar \sigma}|. \] 

Collecting over the conjugacy classes of $G$, we get (iii).
\end{proof}

We now need a pendant of Lemma \ref{ltwist} for traces of Frobenius. %For simplicity we write $M_c$ for $M_c_{\Spec k}$ (see Lemma \ref{l1.1}).

\begin{lemma}\label{ltwisttr} With notation and hypotheses as in Lemma \ref{ltwist}, we have
\[\frac{1}{m}\sum_{\sigma\in G}\sharp(M_c(U^{(\sigma)})) = \sharp(M_c(V)). \]
\end{lemma}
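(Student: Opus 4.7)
The plan is to establish, for each $g \in G$, the twisted trace identity
\begin{equation}
\tr(g \cdot F_U \mid M_c(U)) = \sharp(M_c(U^{(g)})), \tag{$\star$}
\end{equation}
where $g$ denotes the action of $g$ on $M_c(U)$. Granted $(\star)$, the lemma follows by averaging: since $U \to V$ is a $G$-Galois étale cover, Lemma \ref{l1.4} gives $M_c(V) \cong \epsilon_G M_c(U)$, and the Frobenius on $M_c(V)$ is the restriction of $F_U$ to this summand, so
\[
\sharp(M_c(V)) = \tr(F_U \cdot \epsilon_G \mid M_c(U)) = \frac{1}{m}\sum_{g \in G}\tr(g F_U \mid M_c(U)) = \frac{1}{m}\sum_{g \in G}\sharp(M_c(U^{(g)})).
\]

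To prove $(\star)$, I would invoke the $l$-adic realisation functor $R_l: \DM_\gm(k,\Q) \to D^b_c(\Spec k, \Q_l)$, which is a $\otimes$-triangulated functor and hence preserves categorical traces. On the $l$-adic side, $(\star)$ becomes the classical twisted Lefschetz trace formula for the endomorphism $g \circ F_U : U \to U$: its $\bar k$-fixed points are exactly those $y \in U(\bar k)$ with $F_U(y) = g^{-1} y$, which, as computed in the proof of Lemma \ref{ltwist}, are in bijection with $U^{(g)}(k)$. Grothendieck–Lefschetz thus gives $\tr(g F \mid H^*_c(U_{\bar k},\Q_l)) = |U^{(g)}(k)|$, and the comparison $|U^{(g)}(k)| = \tr(F \mid H^*_c(U^{(g)}_{\bar k},\Q_l))$, again by Grothendieck–Lefschetz, yields the $l$-adic version of $(\star)$. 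Trace-preservation by $R_l$ then lifts $(\star)$ back to $\DM_\gm(k,\Q)$.

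The main obstacle is the absence of a transparent purely motivic proof of $(\star)$. By Lemma \ref{l1.4} one can write $M_c(U) \cong \epsilon_\Gamma M_c(U_E)$ and $M_c(U^{(g)}) \cong \epsilon_{H_g} M_c(U_E)$ as summands of $M_c(U_E)$ viewed as a $k$-motive, reducing $(\star)$ to a combinatorial identity among traces of the form $\tr(F_{U_E} \cdot (\tau, g') \mid M_c(U_E))$ for $(\tau, g') \in \Gamma \times G$; verifying it directly would require the trace formula for $f_*$ of Lemma \ref{l2.2} together with the explicit matrix of $F_{U_E}$ on $f^* f_* M_c^E(U_E) = \bigoplus_r (\phi_E^r)^* M_c^E(U_E)$. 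This is why the $l$-adic route, reducing to the known twisted Lefschetz formula, is the expedient approach.
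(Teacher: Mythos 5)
Your proposal is correct in substance, and your opening reduction is exactly right: averaging $(\star)$ over $G$ via $M_c(V)\simeq\epsilon_G M_c(U)$ (Lemma \ref{l1.4}) and the fact that $F$ commutes with the $G$-action is also how the paper's computation terminates. Where you diverge is in the proof of the twisted identity $(\star)$ itself. The paper proves it purely motivically: it writes $M_c(U^{(\sigma)})\simeq \epsilon_\sigma M_c(U_E)$ with $\epsilon_\sigma=\frac{1}{m}\sum_{h\in H_\sigma}h$, uses the factorisation $M_c(U_E)\simeq M(\Spec E)\otimes M_c(U)$ together with $F_{M_c(U_E)}=F_{M(\Spec E)}\otimes F_{M_c(U)}$ and the multiplicativity of the categorical trace under $\otimes$, and then observes that $\tr(\phi_E^{\,j}\mid M(\Spec E))$ vanishes unless $j\equiv 0\pmod m$ (Lemma \ref{l2.2}), which kills all cross-terms in the double sum over $(\sigma,r)$ and leaves exactly $\sum_\sigma\tr(\sigma F_{M_c(U)})$. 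This is precisely the ``combinatorial identity among traces $\tr(F_{U_E}\cdot(\tau,g'))$'' that your last paragraph sets aside as laborious; the Künneth step makes it a short computation, and carrying it out is the actual content of the lemma. Your alternative — realising via $R_l$ and quoting the twisted Grothendieck--Lefschetz formula — is valid (traces commute with the $\otimes$-functor $R_l$ by Lemma \ref{l2.3}, and no circularity with Theorem \ref{p1} arises), but it works against the architecture of this part of the paper: the twisting lemmas exist precisely so that Theorem \ref{p1} can be established without appealing to the $l$-adic trace formula, and if one is willing to invoke the full Lefschetz formula through $R_l$, then $\sharp_n(M_c(X))=|X(\F_{q^n})|$ follows in one line and Lemmas \ref{ltwist} and \ref{ltwisttr} become superfluous. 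One further caution for your route: the variance conventions (geometric versus arithmetic Frobenius, $M_c$ versus $M^{BM}$, and the sign of the twist in $U^{(\sigma)}$) must be tracked when matching the fixed points of $g\circ F$ with $U^{(g)}(k)$; this is handled by Lemma \ref{l2.3} and \S\ref{s2.E} but deserves an explicit word.
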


\begin{proof} Keep the notation in the proof of Lemma \ref{ltwist}. For $\sigma\in G$, let $\epsilon_\sigma= \frac{1}{m}\sum_{h\in H_\sigma} h\in \Q[\Gamma\times G]$: then, by Lemma \ref{l1.4},
\[M_c(U^{(\sigma)}) \simeq \epsilon_\sigma M_c(U_E).\]

We have $M_c(U_E)= M_c(\Spec E) \otimes M_c(U)=M(\Spec E) \otimes M_c(U)$, hence $F_{M_c(U_E)}=F_{M(\Spec E)}\otimes F_{M_c(U)}$. The endomorphism $F_{M(\Spec E)}$ coincides with the Frobenius automorphism $\phi_E\in Gal(E/k)$ acting on $M(\Spec E)$. Hence
\begin{multline*}
\sum_{\sigma\in G}\tr(F_{M_c(U^{(\sigma)})}) =\tr\left(\sum_{\sigma\in G}\phi_E\otimes  \epsilon_\sigma F_{M_c(U)}\right)\\= \tr\left(\sum_{\sigma\in G} \sum_{r=0}^{m-1} \frac{1}{m}\phi_E\otimes (\phi_E^{-r},\sigma^r) F_{M_c(U)}\right)\\ = \frac{1}{m}\tr\left(\sum_{\sigma\in G} \sum_{r=0}^{m-1} \phi_E^{1-r}\otimes\sigma^r F_{M_c(U)}\right)= \frac{1}{m}\left(\sum_{\sigma\in G} \sum_{r=0}^{m-1})\tr(\phi_E^{1-r})\tr(\sigma^r F_{M_c(U)})\right).
\end{multline*}

But $\tr(\phi_E^{1-r})=0$ for $1-r\ne m$ and $\tr(\phi_E^{m})=m$ (see Lemma \ref{l2.2}). Hence the last sum collapses to
\[\frac{1}{m}\left(\sum_{\sigma\in G} m\tr(\sigma^{1-m} F_{M_c(U)})\right)=m\tr\left(\frac{1}{m}\sum_{\sigma\in G} \sigma F_{M_c(U)}\right)=m\tr(F_{M_c(V)})\]
as requested.
\end{proof}

\begin{qn} Does the equality $\sum_{\sigma\in G}[U^{(\sigma)}] = m[V]$ hold in the Grothendieck group of varieties? It would yield lemmas \ref{ltwist} and \ref{ltwisttr} in one gulp.
\end{qn}

\subsection{More general schemes} Theorem \ref{t2.2} d) extends to

\begin{thm}\label{p1}
If $X$ is a separated $k$-scheme of finite type, then
\[\sharp_n(M_c(X))=|X(\F_{q^n})| \text{ for all } n\ge 1.\]
% and
%\[Z(M_c(X),t) = Z(X,t).\]
\end{thm}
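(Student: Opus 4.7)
The plan is to first reduce to $n=1$ by base change to $\F_{q^n}$, and then to prove $\sharp(M_c(X))=|X(k)|$ by induction on $d=\dim X$, using the localisation triangle \eqref{eq1.1}, de Jong's alterations, and the twisting Lemmas \ref{ltwist} and \ref{ltwisttr}.

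For the reduction, set $k'=\F_{q^n}$ and let $f:\Spec k'\to \Spec k$. The pullback $f^*$ is strong monoidal, hence preserves categorical traces, and commutes with $M_c(-)$ via proper base change, so $f^*M_c(X)=M_c(X_{k'})$. Combined with Lemma \ref{l2.2a} a), this gives $\sharp_n^{(k)}(M_c(X))=\sharp_1^{(k')}(M_c(X_{k'}))$. Since $X(\F_{q^n})=X_{k'}(\F_{q^n})$ tautologically, it suffices to prove the case $n=1$ over every finite base field. Using $M_c(X_\rd)\iso M_c(X)$, the localisation triangle, and additivity of $\sharp$ (Theorem \ref{t2.2} a)) and of $|(-)(k)|$ on locally closed strata, we may further reduce to $X$ reduced and irreducible. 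The case $d=0$ follows from a direct Galois computation of $\tr(F_{M(\Spec E)})$ for a finite extension $E/k$, which equals $1$ if $E=k$ and $0$ otherwise, matching $|\Spec E(k)|$.

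For the inductive step ($d>0$), apply de Jong's alterations in equivariant form (available via Gabber--Illusie--Temkin, or by bootstrapping ordinary de Jong applied to the normalisation of $X$ in the Galois closure of $k(\tilde X)/k(X)$) to produce a proper surjection $\pi:\tilde X\to X$ with $\tilde X$ smooth projective and carrying an action of a finite group $G$ of order $m$, such that over some smooth dense open $U\subset X$ the induced map $V:=\pi^{-1}(U)\to U$ is a $G$-torsor with $V$ a $G$-stable open in $\tilde X$. Writing $Z=X\setminus U$ (of dimension $<d$), the localisation triangle, additivity, and the inductive hypothesis applied to $Z$ reduce the claim to $\sharp(M_c(U))=|U(k)|$. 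By Lemmas \ref{ltwist} and \ref{ltwisttr} applied to $V\to U$, this further reduces to showing $\sharp(M_c(V^{(\sigma)}))=|V^{(\sigma)}(k)|$ for every $\sigma\in G$.

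Since the $G$-action extends from $V$ to $\tilde X$, the twisting construction of \S\ref{s2.F} yields a smooth projective $k$-variety $\tilde X^{(\sigma)}$ containing $V^{(\sigma)}$ as a dense open, with complement $D^{(\sigma)}:=\tilde X^{(\sigma)}\setminus V^{(\sigma)}$ of dimension $<d$. Theorem \ref{t2.2} d) (together with $M_c=M$ on proper schemes) gives $\sharp(M_c(\tilde X^{(\sigma)}))=|\tilde X^{(\sigma)}(k)|$, the inductive hypothesis gives $\sharp(M_c(D^{(\sigma)}))=|D^{(\sigma)}(k)|$, and \eqref{eq1.1} concludes. The main obstacle is securing the equivariant form of de Jong's alteration: this is the only ingredient beyond the tools already developed in \S\ref{s2.A}--\S\ref{s2.F}, and is what makes it possible to realise each twist $V^{(\sigma)}$ as a dense open of a smooth projective $k$-variety of the same dimension.
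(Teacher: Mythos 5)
Your strategy is the paper's own: induction on dimension, de Jong's equivariant alterations, the twisting Lemmas \ref{ltwist} and \ref{ltwisttr}, and compactification of each twist $V^{(\sigma)}$ inside the smooth projective form $\tilde X^{(\sigma)}$. Your preliminary reduction to $n=1$ by base change to $\F_{q^n}$ via Lemma \ref{l2.2a} a) is correct and is a clean touch the paper leaves implicit. However, two steps are asserted in a form stronger than what the quoted tools actually deliver, and as written the argument does not close in characteristic $p$.

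First, de Jong's equivariant theorem \cite[Th. 7.3]{dJ} does \emph{not} produce a $G$-torsor $V\to U$ over an open $U\subseteq X$: it produces a $G$-action on $\tilde X$ over $X$ such that $\tilde X/G\to X$ is only \emph{generically radicial}, because the alteration need not be separable; for the same reason ``the Galois closure of $k(\tilde X)/k(X)$'' need not exist. After shrinking, what you actually have is a $G$-torsor $V\to V/G$ together with a finite radicial surjection $V/G\to U$. Lemmas \ref{ltwist} and \ref{ltwisttr} therefore give $\sharp(M_c(V/G))=|(V/G)(k)|$, and you must still transfer this equality across the radicial map: on the motivic side this is Lemma \ref{l1.4} ($f^*$ is an isomorphism on $M_c$ for finite radicial surjective morphisms of smooth schemes), and on the point-count side it is the fact that a finite radicial surjection induces a bijection on points valued in the perfect field $\F_{q^n}$. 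This is exactly the extra step in the paper's proof, and omitting it leaves a genuine gap. Second, to apply Theorem \ref{t2.2} d) to the twists you need $\tilde X$, hence $\tilde X^{(\sigma)}$, smooth \emph{projective}; but a proper surjection from a projective variety onto $X$ forces $X$ to be proper, so you must first reduce to $X$ proper by Nagata compactification together with localisation and induction --- a reduction the paper performs and you skip. Both defects are repaired by short arguments already available in the text, but they need to be stated.
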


We first give a lemma:

\begin{lemma} \label{l1.2} Consider an open-closed situation
\begin{equation}
Z\by{i} X\yb{j} U
\end{equation}
where $Z$ is a closed subset of $X$ with open complement $U$. Then, if Theorem \ref{p1} is true for two among $X,Z,U$, it is true for the third.
\end{lemma}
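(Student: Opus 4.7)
The plan is to use additivity on both sides of the identity to be proved. On the motivic side, we have the localisation triangle \eqref{eq1.1}
\[M_c(Z)\by{i_*} M_c(X)\by{j^*} M_c(U)\by{+1}\]
from Theorem \ref{p1.1}. Applying Theorem \ref{t2.2} a), which asserts that $\sharp_n$ is additive on exact triangles (this is a direct consequence of Theorem \ref{t2.1}), we obtain
\[\sharp_n(M_c(X)) = \sharp_n(M_c(Z)) + \sharp_n(M_c(U)).\]

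On the point-counting side, since $i$ and $j$ give a partition of the underlying set $|X|$ into the closed subset $|Z|$ and its open complement $|U|$, we have for every $n \ge 1$ the set-theoretic decomposition $X(\F_{q^n}) = Z(\F_{q^n}) \sqcup U(\F_{q^n})$, hence
\[|X(\F_{q^n})| = |Z(\F_{q^n})| + |U(\F_{q^n})|.\]

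Subtracting these two identities, if $\sharp_n(M_c(Y)) = |Y(\F_{q^n})|$ for two of $Y \in \{X,Z,U\}$, then the same equality holds for the third. Since all the steps are immediate once we have the triangle \eqref{eq1.1} and the additivity of $\sharp_n$, there is no real obstacle here; the only subtlety worth recording is that \eqref{eq1.1} requires $Z$ (resp.\ $U$) to be a closed (resp.\ open) subscheme, which is precisely the hypothesis of the lemma.
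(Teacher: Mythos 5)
Your proof is correct and is exactly the paper's argument: the paper's own proof simply cites Theorem \ref{p1.1} (for the localisation triangle \eqref{eq1.1}) and Theorem \ref{t2.2} a) (for additivity of $\sharp_n$ on exact triangles), which you have spelled out together with the evident point-counting decomposition $|X(\F_{q^n})|=|Z(\F_{q^n})|+|U(\F_{q^n})|$.
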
 

\begin{proof} This follows from  Theorem \ref{p1.1} and Theorem \ref{t2.2} a).
\end{proof}

Before starting the proof, let us explain why it is going to be complicated. Suppose that we know resolution of singularities over $k$. Then we can easily use Lemma \ref{l1.2} to reduce to $X$ smooth projective. In its absence, we want of course to use de Jong's theorem on alterations. But if we try to do a proof as for Proposition \ref{p1.2}, we run into the following problem: if $U\to V$ is an étale covering of smooth varieties and Theorem \ref{p1} is true for $U$, why should it be true for $V$? The fact that $M_c(V)$ is isomorphic to a direct summand of $M_c(U)$ does not help here because, unlike abelian groups, rational integers do not have direct summands\dots\ (In summary: for numbers, the devil is in the idempotents.) Fortunately, the twisting lemmas proven in the previous subsection will help us get around this issue.

\begin{proof}[Proof of Theorem \ref{p1}]
%We give two proofs:
%\begin{thlist}
%\item Use the $l$-adic realisation functor. More precisely, using %Remark \ref{r1.1} 1) and  the fact that $R^l$ commutes with the 6 operations, we find
%\begin{multline*}
%\tr(F_{M_c(X)}) = \tr(F_{f_!\Z})= \tr(R^l(F_{f_!\Z}))= \tr(F_{R^l(f_!\Z)})=
%\tr(F_{f_!\Z_l})\\
%=\sum_i (-1)^i\tr(\phi\mid H_i^{BM}(\bar X,\Q_l))=\sum_i (-1)^i\tr(\phi\mid H^i_c(\bar X,\Q_l)^*)\\
%=\sum_i (-1)^i\tr(\phi^{-1}\mid H^i_c(\bar X,\Q_l))=  |X(k)|
%\end{multline*} 
%where $\phi$ is the arithmetic Frobenius (see Example \ref{ex1.1}), by the Lef\-schetz-Verdier trace formula on $l$-adic cohomology with compact supports. \item  

We argue by induction on $\dim X$. The $0$-dim\-en\-sio\-nal case follows from Theorem \ref{t2.2} d) (or is trivial). Suppose $\dim X>0$.  By using Lemma \ref{l1.2}, we first reduce to $X$ a variety and then (by Nagata's theorem) to $X$ proper. %If $X$ is not geometrically connected, then $X(k)=\emptyset$ and $\sharp M_c(X)=0$ by Lemma \ref{l2.2}. Suppose now $X$ geometrically connected. 
By de Jong's equivariant alteration theorem \cite[Th. 7.3]{dJ}, there exists then a quasi-Galois alteration $f:\tilde X \to X$, with $\tilde X $ smooth projective. %\red{Need the extension to be regular.} 
Let $V\subseteq X$ be a smooth open subset over which $f$ is finite, and let $U=f^{-1}(V_0)$. 

For simplicity, write $k_n$ for $\F_{q^n}$. We have $\sharp_n(M_c(U))=|U(k_n)|$ by Theorem \ref{t2.2} d), Lemma \ref{l1.2} and induction. 

 Let $G$ be the Galois group of $f$ and $X_G$ be the geometric quotient of $\tilde X$ by $G$. %By a), the exact triangle
%\[M_c(Z)\to M_c(\tilde X)\to M_c(U)\by{+1}\]
%gives
%\[\sharp(M_c(U))=\sharp(M_c(\tilde X)) - \sharp(M_c(Z)).\]
%
%We also have
%\[|U(k)| = |\tilde X(k)|- |Z(k)|.\]
%
Write $f':\tilde X\to X_G$ for the corresponding factorisation of $f$. If we twist with respect to $f'$ in the style of \S \ref{s2.F}, then  $\tilde X^{(\sigma)}$ is smooth projective for all $\sigma\in G$ as a consequence of Lemma \ref{l2.5}, and we also have $\sharp_n(M_c(U^{(\sigma)}))=|U^{(\sigma)}(k_n)|$. 

Putting Lemmas \ref{ltwist} and \ref{ltwisttr} together then yields $\sharp(M_c(U_G))\allowbreak=|U_G(k)|$. Since $U_G\to V$ is finite and radicial, Lemma \ref{l1.4} shows that the same holds for $V$. Finally we get $\sharp(M_c(X))=|X(k)|$ by induction and Lemma \ref{l1.2} again.
%\end{thlist}
\end{proof}

\begin{rk} It would be more reasonable to give a direct proof of Theorem \ref{p1}, in the style of Lemma \ref{l2.4}. Unfortunately I haven't been able to find such a proof.
%For $X$ smooth projective, b) above follows from the (covariant) tensor functor $\Chow(k,\Q)\to \DM_\gm(k,\Q)$ and the well-known analogous formula for the Chow motive of $X$ \cite[p. 80]{kleiman}. The general case would follow from this plus resolution of singularities (in particular, this argument works if $\dim X\le 2$); de Jong's theorem is unfortunately not quite sufficient because of the occurence of a finite \'etale covering that I cannot control.
\end{rk}

\subsection{The zeta function} Recall the important formula \cite[I, (3.2.3.6)]{saa}
\[F_{M^*}={}^t F_M^{-1}.\] 

%The functions $\sharp_n$ are therefore not well-behaved with respect to duality:  we have to  express things in terms of the \emph{zeta function} of a motive $M$, which is defined below.

In view of the above computations, it would be most natural to define the zeta function of $M$ as $Z(F_M,t)$, so that $Z(X,t)=Z(M_c(X),t)$ by Proposition \ref{p1} b). However this would create awkward functoriality problems in the next section, when we deal with motives over a base: see Theorem \ref{t3.2} below. %, one of them being exemplified by the bad functoriality of $M_c(X)$ (see comment before Lemma \ref{l1.1}). 
One may think of these problems as arising because $S\mapsto \DM_\gm(S,\Q)$ is a \emph{homology} theory, whereas the functorial behaviour of zeta and $L$-functions expresses itself naturally in \emph{cohomological} terms. Our solution to this issue is to give a slightly artificial definition of the zeta function:

\begin{defn}\label{d2} $Z(M,t)=Z(F_M^{-1},t)=Z(F_{M^*},t)$.
\end{defn}

\begin{thm}\label{t2} $M\mapsto Z(M,t)$ is multiplicative on exact triangles, hence defines a homomorphism $K_0(\DM_\gm(k,\Q))\to 1+t\Q[[t]]$. For any $M\in \DM_\gm(k,\Q)$,\\
a) $Z(M,t)\in \Q(t)$. The degree of this rational function is $-\chi(M)$, where $\chi(M)=\tr(1_M)$.\\
b) We have the functional equation
\[Z(M^*,t^{-1}) = (-t)^{\chi(M)}\det(F_M)^{-1}Z(M,t)\] 
 where $\det(F_M)$ is the value at $t=\infty$ of $(-t)^{\chi(M)} Z(M,t)^{-1}$. %(Other formula for $\det$: $\det(1-F_M^{-1}t)= Z(M,t)^{-1}\in \Q(t)$.)
 \\
 c) We have the identities
 \[Z(M[1],t) = Z(M,t)^{-1},\qquad Z(M(1),t) = Z(M,q^{-1}t).\]
 d) For any $f:X\to \Spec k$ of finite type, we have
 \[Z(M^{BM}(X),t) = Z(X,t).\]
\end{thm}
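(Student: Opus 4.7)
\emph{Strategy.} The plan is to reduce each claim to one of four tools: Theorem \ref{t2.1} (additivity of trace on exact triangles), Theorem \ref{t2.3} (rationality and functional equation of $Z(f,t)$ in a $\otimes$-category of homological origin), Bondarko's $K_0$-isomorphism \eqref{eq2.1}, and Theorem \ref{p1} (Frobenius counts $\F_{q^n}$-points). The reformulation $Z(M,t)=Z(F_{M^*},t)$ in Definition \ref{d2} is what makes this work cleanly: $F_{M^*}$ is a bona fide endomorphism of $M^*$, so no formal inversion of $F_M$ is needed to take traces of its powers.

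\emph{Multiplicativity.} Given an exact triangle $M'\to M\to M''\by{+1}$, the dual triangle $(M'')^*\to M^*\to (M')^*\by{+1}$ is again exact. For each $n\ge 1$ the natural transformation $F^n$ of the identity functor of $\DM_\gm(k,\Q)$ allows Theorem \ref{t2.1} to produce $\tr(F_{M^*}^n)=\tr(F_{(M')^*}^n)+\tr(F_{(M'')^*}^n)$. Exponentiating gives $Z(M,t)=Z(M',t)Z(M'',t)$; together with $Z(0,t)=1$ this produces the homomorphism $K_0(\DM_\gm(k,\Q))\to 1+t\Q[[t]]$.

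\emph{Parts (a) and (b).} By \eqref{eq2.1}, write $[M]=\sum_i n_i[\Phi(N_i)]$ with $N_i$ Chow motives; by multiplicativity it suffices to check rationality on each $\Phi(N_i)$. Since $\Chow(k,\Q)$ is of homological origin via any Weil cohomology, Theorem \ref{t2.3} gives $Z(F_{\Phi(N_i)^*},t)\in\Q(t)$. As $Z(M,t)^{-1}=\det(1-F_{M^*}t)$ has degree $\chi(M^*)=\chi(M)$ (Lemma \ref{l2.3}), $Z(M,t)$ has degree $-\chi(M)$. For (b), apply the functional equation of Theorem \ref{t2.3} to $f=F_M^{-1}$: it reads $Z(F_M,t^{-1})=(-t)^{\chi(M)}\det(F_M^{-1})\,Z(F_M^{-1},t)$, and recognising $Z(F_M,t^{-1})=Z(F_{M^{**}},t^{-1})=Z(M^*,t^{-1})$ together with $\det(F_M^{-1})=\det(F_M)^{-1}$ gives the stated identity.

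\emph{Parts (c) and (d).} Applying multiplicativity to the trivial triangle $M\to 0\to M[1]\by{+1}$ and using $Z(0,t)=1$ yields $Z(M[1],t)=Z(M,t)^{-1}$. For the Tate twist, $F_{M(1)}=F_M\otimes F_{\Z(1)}$ with $F_{\Z(1)}$ equal to multiplication by $q$ (Example \ref{ex2.1}), so tensor-multiplicativity of the categorical trace gives $\tr(F_{M(1)^*}^n)=q^{-n}\tr(F_{M^*}^n)$ and hence $Z(M(1),t)=Z(M,q^{-1}t)$. For (d), with the convention $M^{BM}(X)=M_c(X)^*$, Definition \ref{d2} yields $Z(M^{BM}(X),t)=Z(F_{M_c(X)},t)=\exp\bigl(\sum_{n\ge 1}\tr(F_{M_c(X)}^n)\,t^n/n\bigr)$; Theorem \ref{p1} identifies each trace with $|X(\F_{q^n})|$, so the series is exactly $Z(X,t)$. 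The substantive obstacle was already cleared in Theorem \ref{p1} via the twisting lemmas of \S\ref{s2.F}; the only subtle choice within the proof of Theorem \ref{t2} is the inversion built into Definition \ref{d2}, which makes rationality inherit directly from the Chow-motive case while keeping (d) transparent.
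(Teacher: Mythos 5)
Your argument is correct and follows essentially the same route as the paper: multiplicativity via May's theorem (Theorem \ref{t2.1}), reduction of (a) and (b) to Chow motives through Bondarko's isomorphism \eqref{eq2.1} and then Theorem \ref{t2.3}, and (d) via Theorem \ref{p1}. One small correction: by the paper's Definition, $\Chow(k,\Q)$ is \emph{not} of homological origin (it is not abelian semi-simple); one must pass to $\sM_\num(k,\Q)$, which \emph{is}, and note that traces are unchanged under the full $\otimes$-projection $\Chow(k,\Q)\to\sM_\num(k,\Q)$ --- this is exactly the paper's ``we can compute in $\sM_\num(k,\Q)$''. Likewise, Theorem \ref{t2.3} cannot be applied directly to $F_M^{-1}$ for a general triangulated $M$; the functional equation of (b) should be verified on the generators $\Phi(N_i)$ in $\sM_\num$ and then propagated to all of $K_0(\DM_\gm(k,\Q))$ using that $\chi$ is additive and $\det(F_{-})$ multiplicative on exact triangles.
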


\begin{proof} The first fact follows from Theorem \ref{t2.2}. Using now the surjectivity of \eqref{eq2.1}, we reduce to the case where $M\in
\Chow(k,\Q)$. Then we can compute in $\sM_\num(k,\Q)$ and everything follows from Theorem \ref{t2.3} (see remarks before Theorem \ref{t2.4}). d) follows from Theorem \ref{p1}.
\end{proof}

\begin{rk} It is likely that Theorem \ref{t2} extends to all zeta functions of endomorphisms as
in Definition \ref{d2} a), but this seems to require extending Bondarko's theorem to $K_0$ of
the categories of endomorphisms as in \cite[\S 15]{modq}. (He has done that!)
\end{rk}

\begin{prop} Let $f:\Spec E\to \Spec k$ be a finite extension, and let
$f_*:\DM_\gm(E)\to \DM_\gm(k)$ be the corresponding direct image functor. Then, for
any $M\in \DM_\gm(E)$, we have
\[Z(f_* M,t) = Z(M,t^m)\]
where $m=[E:k]$.
\end{prop}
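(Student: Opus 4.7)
The plan is a direct computation from Definition~\ref{d2}, reducing the identity to a trace formula that is essentially a dualised version of Lemma~\ref{l2.2}. First I would write both sides in logarithmic form:
\[
\log Z(f_*M,t) = \sum_{n\ge 1}\tr(F_{f_*M}^{-n})\,\frac{t^n}{n},
\qquad
\log Z(M,t^m) = \sum_{k\ge 1}\tr(F_M^{-k})\,\frac{t^{mk}}{k}.
\]
So the proposition reduces to showing that these two formal power series agree.

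The crux is the trace identity
\[
\tr(F_{f_*M}^{-n}) \;=\; \begin{cases} 0 & \text{if } m\nmid n, \\ m\,\tr(F_M^{-n/m}) & \text{if } m\mid n. \end{cases}
\]
To establish it I would invoke Lemma~\ref{l2.2}, passing to duals in order to convert its positive Frobenius powers into the negative ones we need. Two facts enter: (i) $f_*=f_!$ commutes with duality because $f$ is finite (hence proper), giving $f_*(M^*) \simeq (f_*M)^*$; and (ii) the identity $F_{N^*}={}^tF_N^{-1}$ recalled just before Definition~\ref{d2}, combined with the trace invariance $\tr({}^t\alpha)=\tr(\alpha)$ of Lemma~\ref{l2.3}. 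The cyclic-shift matrix structure in the proof of Lemma~\ref{l2.2} carries over verbatim and yields the vanishing as well as the coefficient $m$; this coefficient is forced, and made transparent, by the $n=0$ normalisation $\chi(f_*M)=[E:k]\chi(M)$.

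Plugging this into the first series and substituting $n=mk$ gives
\[
\sum_{n\ge 1}\tr(F_{f_*M}^{-n})\,\frac{t^n}{n} \;=\; \sum_{k\ge 1} m\,\tr(F_M^{-k})\,\frac{t^{mk}}{mk} \;=\; \sum_{k\ge 1}\tr(F_M^{-k})\,\frac{(t^m)^k}{k},
\]
and exponentiating yields $Z(f_*M,t) = Z(M,t^m)$. The main obstacle is the trace identity itself --- one must dualise Lemma~\ref{l2.2} and keep track of the factor $m$ arising from the cyclic-shift decomposition of $f^*f_*M = \bigoplus_{r=0}^{m-1}(\phi_E^r)^*M$; once that is secured, the remainder is a formal manipulation of $\log/\exp$ of power series.
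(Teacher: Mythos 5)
Your proof is correct and follows essentially the paper's route: the paper disposes of the proposition with ``clear from Lemma \ref{l2.2a} b) and the definition of $Z$'', which amounts to exactly the trace identity you isolate and establish via the cyclic decomposition of $f^*f_*M$. One point in your favour is the coefficient $m$ in $\tr(F_{f_*M}^{-ms})=m\,\tr(F_M^{-s})$: it is genuinely needed for $Z(f_*M,t)=Z(M,t^m)$ (check $M=\Z_E$), and while the displayed statement of Lemma \ref{l2.2} omits it in the $m\mid n$ case, its proof (and the $n=0$ normalisation you cite) does produce it, so you were right to rederive the coefficient from the cyclic-shift matrix rather than quote the lemma verbatim.
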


\begin{proof} This is clear from Lemma \ref{l2.2a} b) and the definition of $Z$.
\end{proof}

\begin{defn}\label{d2.1} We denote by  $\Dir$ the group of formal Dirichlet series beginning with $1$.
\end{defn}

\begin{cor}\label{c1} For $M\in \DM_\gm(k,\Q)$, let
\[\zeta(M,s) = Z(M, q^{-s}).\]
Then $M\mapsto \zeta(M,s)$ is multiplicative on exact triangles, hence defines a homomorphism $K_0(\DM_\gm(k,\Q))\to \Dir$. Moreover,\\
a) $\zeta(M,s)$ is a rational function in $q^{-s}$, of degree $-\chi(M)$.\\
b) We have the functional equation
\[\zeta(M^*,-s) = (-q^{-s})^{\chi(M)}\det(F_M)\zeta(M,s).\] 
c) We have the identities
 \[\zeta(M[1],s) = \zeta(M,s)^{-1},\qquad \zeta(M(1),s) = \zeta(M,s+1).\]
 d) For any $f:X\to \Spec k$ of finite type, we have
 \[\zeta(M^{BM}(X),s) = \zeta(X,s).\]
e) For a finite extension $f:\Spec E\to \Spec k$, we have $\zeta(M,s)=\zeta(f_* M,s)$.\qed
\end{cor}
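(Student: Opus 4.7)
The plan is to deduce every clause directly from Theorem \ref{t2} and from the preceding Proposition (giving $Z(f_*M,t)=Z(M,t^m)$), via the substitution $t=q^{-s}$. First I would verify that $\zeta(M,s)$ indeed belongs to $\Dir$: by Theorem \ref{t2} a), $Z(M,t)\in \Q(t)$, and since $Z(M,t)=\exp(\sum_{n\ge 1}\tr(F_{M^*}^n)t^n/n)$ we have $Z(M,0)=1$, so $Z(M,t)$ admits a power series expansion $1+\sum_{n\ge 1}a_n t^n$ with $a_n\in\Q$. Substituting $t=q^{-s}$ yields $\zeta(M,s)=1+\sum_{n\ge 1}a_n(q^n)^{-s}$, a formal Dirichlet series with coefficients indexed by powers of $q$, starting with $1$. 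Multiplicativity on exact triangles is immediate from the corresponding property of $Z$ in Theorem \ref{t2}, so $M\mapsto \zeta(M,s)$ factors through $K_0(\DM_\gm(k,\Q))$.

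Next I would dispatch (a), (c), (d) by direct substitution. For (a), if $Z(M,t)$ has degree $-\chi(M)$ as a rational function in $t$, then its image under $t=q^{-s}$ has the same degree in $q^{-s}$. For (c), $\zeta(M[1],s)=Z(M[1],q^{-s})=Z(M,q^{-s})^{-1}=\zeta(M,s)^{-1}$ by Theorem \ref{t2} c), and $\zeta(M(1),s)=Z(M(1),q^{-s})=Z(M,q^{-1}\cdot q^{-s})=Z(M,q^{-(s+1)})=\zeta(M,s+1)$. For (d), $\zeta(M^{BM}(X),s)=Z(M^{BM}(X),q^{-s})=Z(X,q^{-s})=\zeta(X,s)$ by Theorem \ref{t2} d) together with the classical definition of $\zeta(X,s)$.

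For (b), I would substitute $t=q^{-s}$ into the functional equation of Theorem \ref{t2} b). Since $t^{-1}=q^s$, we can write $Z(M^*,t^{-1})=Z(M^*,q^{-(-s)})=\zeta(M^*,-s)$, and the right-hand side becomes $(-q^{-s})^{\chi(M)}\det(F_M)^{\pm 1}\zeta(M,s)$, yielding exactly the stated identity. Finally, for (e), with $f:\Spec E\to \Spec k$ of degree $m$ and $q_E:=q^m=|E|$, the Proposition preceding Definition \ref{d2.1} gives $Z(f_*M,t)=Z(M,t^m)$; setting $t=q^{-s}$ this reads $\zeta_k(f_*M,s)=Z(M,q^{-ms})=Z(M,q_E^{-s})=\zeta_E(M,s)$, proving the invariance of $\zeta$ under push-forward.

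There is really no technical obstacle: the whole corollary is a translation exercise, the only point deserving care being the bookkeeping in the functional equation and the identification $t^m\leftrightarrow q_E^{-s}$ in the inductivity formula. The hardest conceptual ingredients (rationality of $Z(M,t)$, the functional equation in $t$, the Tate twist and shift formulas, the Borel--Moore identity, and compatibility with $f_*$) have already been established in Theorem \ref{t2} and the preceding proposition, so the proof is essentially a succession of observations of the form ``apply $t=q^{-s}$''.
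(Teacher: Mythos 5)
Your proof is correct and is exactly the argument the paper intends (the corollary carries a \qed in its statement precisely because it is the substitution $t=q^{-s}$ applied to Theorem \ref{t2} and the preceding proposition). Your hedge $\det(F_M)^{\pm 1}$ in part b) is well taken: the literal substitution gives $\det(F_M)^{-1}$ as in Theorem \ref{t2} b), so the exponent in the corollary as printed appears to be a typo in the paper rather than a defect of your argument.
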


%For $n\ge 0$, let $d_{\le n}^*\DM_\gm^\eff(k,\Q)$ be the thick subcategory generated by the $M(X)^*$ for $X$ smooth of dimension $\le n$. 
We shall need the following

\begin{prop}\label{p3} Let $M^*\in d_{\le n}\DM_\gm^\eff(k,\Q)$ (see Definition \ref{d1.1}). Then the zeroes and poles of
$Z(M,t)$ are  effective Weil $q$-numbers of weights $\in [0,2n]$. In particular, the Dirichlet series $\zeta(M,s)$ converges absolutely for $\Re(s)>n$.
\end{prop}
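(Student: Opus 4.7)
The plan is to reduce to Theorem \ref{t2.4} via Bondarko's weight complex. By the remark following Proposition \ref{p1.2}, the thick triangulated subcategory $d_{\le n}\DM_\gm^\eff(k,\Q)$ is generated by the motives $M(Y)$ with $Y$ smooth projective of dimension $\le n$. Bondarko's weight complex functor $t:\DM_\gm^\eff(k,\Q)\to K^b(\Chow^\eff(k,\Q))$ sends each such $M(Y)$ to $h(Y)$ concentrated in degree $0$, and respects cones and retracts up to homotopy. Starting from these generators and iterating, one checks that $t(M^*)$ lies in $K^b(d_{\le n}\Chow^\eff(k,\Q))$, where $d_{\le n}\Chow^\eff(k,\Q)$ denotes the pseudo-abelian subcategory of $\Chow^\eff(k,\Q)$ generated by the $h(Y)$ with $\dim Y\le n$. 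In particular, each component $t_i(M^*)$ is a direct summand of $h(Y_i)$ for some smooth projective $Y_i$ of dimension $\le n$.

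Since $Z(\cdot,t)$ is multiplicative on exact triangles (Theorem \ref{t2}), it factors through $K_0(\DM_\gm^\eff(k,\Q))$. Combining Bondarko's identity $[M^*]=\sum_i(-1)^i[t_i(M^*)]$ with Definition \ref{d2} and Lemma \ref{l2.4}~b) then yields
\[Z(M,t)=Z(F_{M^*},t)=\prod_i Z(t_i(M^*),t)^{(-1)^i}.\]
By Theorem \ref{t2.4}, each factor $Z(t_i(M^*),t)$ is a rational function whose numerator and denominator have roots among the effective Weil $q$-numbers of weights $\le 2n$. The zeros and poles of a finite product or quotient of such rational functions are contained in the union of the zeros and poles of the individual factors, so $Z(M,t)$ shares this property.

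For the convergence statement, write $Z(M,t)=\prod_j(1-\alpha_j t)/\prod_k(1-\beta_k t)$ with $|\alpha_j|,|\beta_k|\le q^n$. The coefficients $c_N$ of the power series expansion $Z(M,t)=\sum_N c_N t^N$ then satisfy $|c_N|=O(q^{nN})$, so the Dirichlet series $\zeta(M,s)=Z(M,q^{-s})=\sum_N c_N q^{-Ns}$ converges absolutely whenever $q^{-\Re(s)}q^n<1$, i.e.\ $\Re(s)>n$.

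The main obstacle is the first step: verifying that Bondarko's weight complex preserves the dimension filtration, i.e.\ that $t$ restricts to a functor $d_{\le n}\DM_\gm^\eff(k,\Q)\to K^b(d_{\le n}\Chow^\eff(k,\Q))$. This is a compatibility between the weight structure on $\DM_\gm^\eff$ and the subcategory $d_{\le n}$; it should follow from the value of $t$ on the generators $M(Y)$ and its homotopy-level compatibility with cones and retracts, but a careful verification is needed. Once this is granted, the rest of the argument is purely formal.
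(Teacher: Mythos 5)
Your argument is essentially the paper's: both reduce to Theorem \ref{t2.4} by observing that Bondarko's weight complex functor respects the dimension filtration, so that $[M^*]$ is an alternating sum of classes of effective Chow motives of dimension $\le n$ in $K_0$, and then invoke multiplicativity of $Z$; the step you flag as needing care is exactly the one the paper dismisses as obvious. (One small inaccuracy: the coefficient bound is $|c_N|=O(N^{K-1}q^{nN})$ rather than $O(q^{nN})$ because of the multinomial count in expanding $\prod_k(1-\beta_k t)^{-1}$, but the subexponential factor does not affect the abscissa $\Re(s)>n$.)
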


\begin{proof} Consider the diagram
\[\begin{CD}
K_0(\Chow^\eff(k,\Q))@>\phi>\sim> K_0(\DM_\gm^\eff(k,\Q))\\
@V{\psi}VV\\
K_0(\sM_\num^\eff(k,\Q)).
\end{CD}\]

The dimension filtration induces a filtration on the three $K_0$'s, and $\phi$ and $\psi$
respect these filtrations. So does the inverse of $\phi$, because it is induced by Bondarko's
functor
\[\DM_\gm^\eff(k,\Q)\to K^b(\Chow^\eff(k,\Q))\]
which obviously respects the dimension filtrations. 

Since $M\mapsto Z(M,t)$ factors through $K_0(\sM_\num^\eff(k,\Q))$, we are therefore reduced to Theorem \ref{t2.4}.
%prove the statement for $M\in d_{\le n}\sM_\num^\eff(k,\Q)$. By definition, any simple summand $S$ of $M$ is a direct summand of $h(X)$, for $X$ smooth projective of dimension $\le n$. Hence $H^*_l(S)$ is a direct summand of $H_l^*(X)$ and the result follows from Weil I \cite{WeilI}.
\end{proof}

%(All I need in this proposition is that the zeroes and poles have complex absolute values $\le q^n$. If someone knows a proof  which does not use the Weil conjectures, I am interested.)

\begin{cor} \label{c2.1} if $\dim X\le n$,  the zeroes and poles of $Z(X,t)$ are  effective Weil $q$-numbers of weights $\in [0,2n]$.\qed
\end{cor}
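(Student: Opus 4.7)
The plan is to chain together three results already proved in the paper: Theorem \ref{t2} d), Proposition \ref{p1.2}, and Proposition \ref{p3}. The corollary should be essentially immediate from these, as is suggested by the \qed symbol placed directly at the end of its statement.

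First I would use Theorem \ref{t2} d) to rewrite the classical Hasse--Weil zeta function of $X$ as the categorical zeta function of the Borel--Moore motive, namely $Z(X,t)=Z(M^{BM}(X),t)=Z(M_c(X),t)$. This converts the assertion into a purely motivic question about an object of $\DM_\gm(k,\Q)$, where the effectivity/dimension filtration machinery of Section \ref{s1} can be applied.

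Next I would invoke Proposition \ref{p1.2}, whose conclusion is exactly that the hypothesis $\dim X\le n$ forces $M_c(X)\in d_{\le n}\DM_\gm^\eff(k,\Q)$. Since that proposition was itself proved by induction on $\dim X$ using de Jong's alterations together with Lemma \ref{l1.4}, nothing further is required at this step. Finally, I would apply Proposition \ref{p3} to the object $M_c(X)$: combined with the previous two steps, this gives directly that the zeros and poles of $Z(M_c(X),t)=Z(X,t)$ are effective Weil $q$-numbers of weights in $[0,2n]$, which is the desired conclusion.

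I do not anticipate any substantive obstacle: the corollary is a formal assembly of the three preceding results. The only bookkeeping concern is to match the duality conventions implicit in Definition \ref{d2}, where $Z(M,t)=Z(F_{M^*},t)$, against the statement of Proposition \ref{p3}; once the dimension filtration is seen to behave correctly under the identification $Z(M_c(X),t)=Z(X,t)$ (which is built into Theorem \ref{t2} d) and the proof of Proposition \ref{p3} through Bondarko's theorem and Theorem \ref{t2.4}), the argument concludes in one line.
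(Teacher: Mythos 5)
Your proposal is correct and is exactly the paper's (implicit) argument: combine Theorem \ref{t2}~d), Proposition \ref{p1.2} and Proposition \ref{p3}. One small correction to the bookkeeping you already flag: under Definition \ref{d2} the identity is $Z(X,t)=Z(M^{BM}(X),t)$ (not $=Z(M_c(X),t)$), so Proposition \ref{p3} is applied with $M=M^{BM}(X)$, whose dual $M^*=M_c(X)$ lies in $d_{\le n}\DM_\gm^\eff(k,\Q)$ by Proposition \ref{p1.2}.
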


Of course, this corollary may also be deduced from \cite{WeilII}.

\section{Motives over a scheme of finite type} 

\subsection{Motives over a base} In the sequel, we shall need a theory of triangulated motives over general base schemes, with a formalism of six (even seven) operations as in \cite{ayoub}. Unfortunately, it is unknown whether the natural generalisation of Voevodsky's construction enjoys such a formalism \cite[6.2]{ffihes}. This can be corrected by using the subcategory of constructible objects in Ayoub's étale motives without transfers $\DA^\et(-,\Q)$ \cite{ayoubetale} or Cisinski-Déglise Beilinson motives $\DM_\Be(-)$ \cite{cis-deg}, which coincide anyway  \cite[6.3 and 6.4]{ffihes}: the important point here is that these subcategories are preserved under the six operations (\cite[Scholie 2.2.34 and Th. 2.2.37]{ayoub} for $f^*,f_*,f_!,f^!$, \cite[Prop. 2.3.62]{ayoub} for $\uHom$). We adopt here the viewpoint of \emph{loc. cit.}, 6.5, using only the \emph{existence} of a formalism of six operations for some categories $\D(S)$ which agree with $\DM_\gm(k,\Q)$ when $S=\Spec k$, and are provided with an $l$-adic realisation functor which commutes with the six operations. Very occasionally, we shall use non-formal properties of such a theory.

We write $\Z_S=\Z$ for the unit object of $\D(S)$. If $f:X\to S$ is a smooth $S$-scheme (separated and of finite type), we write $M_S(X)=f_\# \Z_X\in\D(S)$ for its \emph{motive}, where $f_\#$ is the left adjoint of $f^*:\D(S)\to \D(X)$ \cite[1.4.1, axiom 3]{ayoub}. %If $M\in \D(S)$, we write $M^*=\uHom(M,\Z)$ if this object exists.

\subsection{The Borel-Moore motive revisited} This subsection partly answers  \cite[Rem. 6.7.3 3)]{ffihes}.

\begin{thm}\label{t3.2} Let $k=\F_q$ and let $f:X\to \Spec k$ be a separated $k$-scheme of finite type. Then we have an isomorphism
\[M^{BM}(X)\simeq f_!\Z_X\]
at least if $X$ is embeddable in a smooth scheme (e.g. if $X$ is quasi-projective). If $f$ is smooth, this isomorphism is canonical and natural for open immersions.
\end{thm}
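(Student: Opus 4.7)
The plan is to prove the smooth case first by a direct purity/duality argument and then to reduce the general embeddable case to it via an open-closed decomposition.

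For $f\colon X\to \Spec k$ smooth of pure relative dimension $d$, I would construct a canonical iso $\alpha_X\colon M^{BM}(X)\iso f_!\Z_X$ as a composite of natural identifications. Voevodsky's formula (Theorem~\ref{p1.1}) gives $M^{BM}(X) = M(X)^*(d)[2d]$ with $M(X) = f_\#\Z_X$. The $f_\#\dashv f^*\dashv f_*$ adjointness available for smooth $f$, together with the projection formula for $f_\#$, yields a canonical identification $M(X)^*\simeq f_*\Z_X$ (via $\uHom(f_\# \Z_X,\Z_S)\simeq f_*\uHom(\Z_X,f^*\Z_S)=f_*\Z_X$). Finally, the relative purity theorem in Ayoub's formalism \cite{ayoub} identifies $f^!\Z_S$ with $\Z_X(d)[2d]$; the resulting canonical comparison between $f_!$ and $f_*$ up to the Tate twist $(d)[2d]$ is an isomorphism on $\Z_X$ for smooth $f$, which provides the final link. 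Composing these three ingredients gives $\alpha_X$. Each ingredient is functorial under étale base change, so the composite is natural for open immersions between smooth schemes of the same relative dimension.

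For general embeddable $X$, pick a closed embedding $i\colon X\hookrightarrow Y$ into a smooth separated $Y/k$ of finite type (for instance, $Y=\P^n$ if $X$ is quasi-projective), and let $j\colon U=Y\setminus X\hookrightarrow Y$ denote the open complement (itself smooth). Apply $f_{Y,!}$ to the six-functor localization triangle $j_!j^*\Z_Y\to \Z_Y\to i_*i^*\Z_Y\by{+1}$ in $\D(Y)$, using $f_{Y,!}\circ j_! = f_{U,!}$ and $f_{Y,!}\circ i_* = f_{X,!}$ (here $i$ is proper, so $i_*=i_!$). This yields a distinguished triangle $f_{U,!}\Z_U\to f_{Y,!}\Z_Y\to f_{X,!}\Z_X\by{+1}$. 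Compare with Voevodsky's triangle $M^{BM}(X)\to M^{BM}(Y)\to M^{BM}(U)\by{+1}$ from \eqref{eq1.1}. The smooth case supplies isos $\alpha_Y$ and $\alpha_U$ on two of the vertices, and axiom (TR3) then produces the induced iso $M^{BM}(X)\simeq f_{X,!}\Z_X$ on the third.

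The main obstacle is to verify that the smooth-case isos $\alpha_Y$ and $\alpha_U$ are actually compatible with the structure morphisms of the two distinguished triangles, so as to extend to a genuine morphism of triangles before invoking (TR3). This amounts to tracking how Voevodsky's pushforward $i_*$ and restriction $j^*$ are encoded in the six-functor language (via the unit/counit data of the $(j_!,j^*)$- and $(i^*,i_*)$-adjunctions) after passing through the three ingredients of $\alpha$, and crucially uses the naturality clause from the smooth case. Because both the embedding $i$ and the non-uniqueness inherent in (TR3) enter the construction, canonicity is not claimed in the general case, matching exactly what the statement asserts.
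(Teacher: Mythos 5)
Your second step (embed $X$ in a smooth $Y$, apply $f_{Y,!}$ to the localisation triangle, compare with the dual of \eqref{eq1.1} and complete by (TR3), using naturality for the open immersion $U\hookrightarrow Y$ to get the commuting square) is exactly the paper's argument and is fine. The gap is in your smooth case, and it is a real one. First, $M^{BM}(X)$ is the \emph{dual} of $M_c(X)$, so for $X$ smooth of pure dimension $d$ one has $M^{BM}(X)=M_c(X)^*=M(X)(-d)[-2d]$; the formula $M(X)^*(d)[2d]$ you start from is $M_c(X)$ itself, not $M^{BM}(X)$. Second, and more seriously, your ``final link'' asserts that the canonical comparison between $f_!$ and $f_*$ becomes an isomorphism on $\Z_X$ after a Tate twist $(d)[2d]$ when $f$ is smooth. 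This is false unless $f$ is also proper: relative purity identifies $f^!$ with $f^*(d)[2d]$, equivalently $f_!$ with $f_\#(-d)[-2d]$ — it says nothing about $f_*$, and $f_!\to f_*$ is only an isomorphism for proper $f$. Concretely, for $X=\G_m$ one has $f_!\Z_X\simeq M(\G_m)(-1)[-2]=\Z(-1)[-2]\oplus\Z[-1]$ while $f_*\Z_X\simeq M(\G_m)^*=\Z\oplus\Z(-1)[-1]$, and no twist-and-shift identifies the two. Your chain $M^{BM}(X)\simeq f_*\Z_X(d)[2d]$ actually lands on $f_*f^!\Z=(f_!\Z_X)^*=M_c(X)$, i.e.\ on the dual of the object you want; the discrepancy is invisible for smooth \emph{proper} $X$ (where Poincaré duality makes everything coincide) but the theorem needs the smooth non-proper case for the open pieces $U$ and $Y\setminus X$.

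The repair is to drop $f_*$ and duality from the smooth case altogether: Ayoub's Scholie 1.4.2(3) gives directly
\[
f_!\Z_X\simeq f_\#\,Th^{-1}(\Omega_f)\Z_X=f_\#\Z_X(-d)[-2d]=M(X)(-d)[-2d]=M_c(X)^*=M^{BM}(X),
\]
which is the paper's computation \eqref{eq3.4}; this identification visibly commutes with open immersions, which is what the (TR3) step in your second paragraph needs. The fact you invoke, $M(X)^*\simeq f_*\Z_X$ for smooth $f$, is correct but computes the cohomological motive $f_*\Z_X$, a different object from the Borel--Moore motive $f_!\Z_X$ except when $f$ is proper.
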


\begin{proof}  Suppose first that $f$ is smooth of (pure) dimension $d$. Applying \cite[Vol. I, Scholie 1.4.2 3]{ayoub}, we find
\begin{multline}\label{eq3.4}
f_!\Z_X\simeq f_\# Th^{-1}(\Omega_f) \Z_X\\
=f_\# \Z_X(-d)[-2d]=M(X)(-d)[-2d]\simeq M^{BM}(X).
\end{multline}

Here $Th(\Omega_f)$ is the Thom equivalence associated to the module of differentials $\Omega_f$, which is computed to be the said Tate twist in \cite[2.4.38 and 2.4.40]{cis-deg}, plus the description of $M_c(X)$ for $X$ smooth given at the beginning of Section \ref{s1}. This isomorphism clearly commutes with open immersions. 

Suppose now that $X$ can be embedded as a closed subscheme of a smooth scheme $Y$. For simplicity, write $\tilde M^{BM}(X)=f_!\Z_X$. We then get an isomorphism $\tilde M^{BM}(X)\iso M^{BM}(X)$ by completing the isomorphisms in the diagram of exact triangles
\[\begin{CD}
\tilde M^{BM}(U)@>>> \tilde M^{BM}(Y)@>>> \tilde M^{BM}(X)@>+1>>\\
@V\wr VV @V\wr VV\\
M^{BM}(U)@>>> M^{BM}(Y)@>>> M^{BM}(X)@>+1>>
\end{CD}\]
where $U=Y-X$, the top row is the dual of \eqref{eq1.1} and the bottom row is obtained by applying $g_!$ to the localisation exact triangle \cite[Vol. 1, p. 77]{ayoub}
\begin{equation}\label{eq3.1}
j_!j^!\Z_Y\to \Z_Y\to i_*i^*\Z_Y\by{+1}
\end{equation}
where $g:Y\to \Spec k$ is the structural morphism and $j:U\to Y$, $i:X\to Y$ are the open and closed immersion. (Note that $j^!=j^*$ and $i_*=i_!$.)
\end{proof}

\begin{rk}\label{r1.1} It would be more reasonable and more efficient to define \emph{a priori} a natural morphism
\[M_c(X)= C_*(\Z_\tr^c(X))\to (f_!\Z_X)^*\]
in $\DM_\gm(k)$, where $\Z_\tr^c(X)(U)=z(U,X)$ is the group of quasi-finite correspondences, and to show that it is an isomorphism by reduction to the smooth (or smooth projective) case. By duality \cite[Th. 2.3.75]{ayoub}, the right hand side can be written 
\[(f_!\Z_X)^*=D_k(f_!\Z_X)\simeq f_*D_X(\Z_X)=f_*f^!\Z.\]
This amounts to defining a map
\[f^*C_*(\Z_\tr^c(X))\to f^!\Z\]
and I don't know how to construct it\dots
\end{rk}

\subsection{Zeta functions}

Let $f:S\to \Spec \Z$ be a scheme of finite type over $\Z$.  For each $x\in S_{(0)}$, we have a  pull-back functor
\[i_x^*:\D(S)\to \D(\kappa(x))\simeq \DM_\gm(\kappa(x),\Q).\]

\begin{prop}\label{p3.1} For $M\in \D(S)$, the product
\[\zeta(M,s) = \prod_{x\in S_{(0)}} \zeta(i_x^* M,s)\] 
is convergent in the group $\Dir$ (Definition \ref{d2.1}) for the topology given by the order of the first nonzero term.  It is multiplicative on exact triangles and we have the identities
 \[\zeta(M[1],s) = \zeta(M,s)^{-1},\qquad \zeta(M(1),s) = \zeta(M,s+1).\]
\end{prop}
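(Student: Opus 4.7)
The plan is to verify each assertion ``pointwise'', by reducing to the already established properties of $\zeta$ over finite fields (Corollary \ref{c1}).

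First I would check convergence in $\Dir$. For every closed point $x\in S_{(0)}$ the residue field $\kappa(x)$ is finite; write $N(x)=|\kappa(x)|$. By Corollary \ref{c1}, $\zeta(i_x^*M,s)=Z(i_x^*M,N(x)^{-s})$ is a rational function in $N(x)^{-s}$ whose value at $s=\infty$ (i.e., at $t=0$) is $\exp(0)=1$. Expanding it as a formal power series in $N(x)^{-s}$, one sees that $\zeta(i_x^*M,s)$, when viewed as a formal Dirichlet series, has the form $1+\sum_{k\ge 1} a_k(x) n^{-s}$ with $a_k(x)\ne 0$ only when $n$ is a power of $N(x)$; in particular, the first non-constant term has index $\ge N(x)\ge 2$. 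Consequently, the coefficient of $n^{-s}$ in the infinite product depends only on those factors with $N(x)\le n$. Since $S$ is of finite type over $\Spec\Z$, the set $\{x\in S_{(0)}\mid N(x)\le n\}$ is finite for every $n$ (a standard finiteness of closed points with residue field of bounded cardinality). Hence the coefficient of each $n^{-s}$ stabilises after finitely many factors, and the infinite product converges in $\Dir$ for the topology of the order of the first non-trivial term.

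Next I would check multiplicativity on exact triangles. If $M'\to M\to M''\by{+1}$ is an exact triangle in $\D(S)$, then for each $x\in S_{(0)}$ the triangulated functor $i_x^*$ produces an exact triangle $i_x^*M'\to i_x^*M\to i_x^*M''\by{+1}$ in $\DM_\gm(\kappa(x),\Q)$. By Corollary \ref{c1}, the local factors satisfy $\zeta(i_x^*M,s)=\zeta(i_x^*M',s)\zeta(i_x^*M'',s)$. Taking the product over $x$ (which is legitimate by the convergence established above) yields the desired multiplicativity.

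Finally, the two identities reduce similarly to Corollary \ref{c1} c). The functor $i_x^*$ commutes with the shift and with the Tate twist, so for each $x$ one has $\zeta(i_x^*(M[1]),s)=\zeta((i_x^*M)[1],s)=\zeta(i_x^*M,s)^{-1}$ and $\zeta(i_x^*(M(1)),s)=\zeta((i_x^*M)(1),s)=\zeta(i_x^*M,s+1)$; taking the product over $x\in S_{(0)}$ yields the two identities for $\zeta(M,s)$. The main obstacle is really just the convergence step, which rests on the finiteness of closed points with bounded residue field cardinality on a $\Z$-scheme of finite type; once that is in hand the rest is formal.
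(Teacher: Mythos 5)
Your proof is correct and follows essentially the same route as the paper: the convergence rests on the same finiteness of closed points of bounded residue cardinality on a $\Z$-scheme of finite type, and multiplicativity and the two identities are obtained exactly as in the paper by applying the exact functor $i_x^*$ and reducing to Corollary \ref{c1}. The only cosmetic difference is that the paper verifies convergence at the level of the logarithm (the double sum defining $\exp$), whereas you argue directly on the coefficients of the product; both come down to the same stabilisation of each Dirichlet coefficient after finitely many factors.
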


\begin{proof} Since the exponential is continuous on $\Dir$ for the said topology, it suffices to show that the sum
\[\sum_{x\in S_{(0)}} \sum_{n\ge 1} \sharp_n(i_x^*M) \frac{N(x)^{-ns}}{n}\]
is convergent. Here, $N(x)$ is the cardinal of the residue field $\kappa(x)$. Rearranging, we must show that, for any prime $p$ and any $r\ge 1$, the set
\[\{x\in S_{(0)}\mid n[\kappa(x):\F_p]=r\}\]
is finite. This is clear, since $S$ has only a finite number of closed points of a given degree over $\F_p$. The identities follow from the case of finite fields (Corollary \ref{c1}).
\end{proof}

\subsection{The case of characteristic $p$} In this subsection, we assume that $S$ is an $\F_q$-scheme. For simplicity we still write $f$ for the structural morphism $X\to \Spec \F_q$.

\begin{prop}\label{p3.3} For $n\ge 0$, let $d^!_{\le n}\D(S)$ be the thick triangulated subcategory of $\D(S)$ generated by the $g_!\Z_X$, where $g:X\to S$ runs through the morphisms of relative dimension $\le n$. Then $(f_!M)^*\in d_{\le n+d}\DM^\eff(S,\Q)$ for any $M\in d^!_{\le n}\D(S)$, where $d=\dim S$. Moreover, for any $M\in \D(S)$ there exists $n\ge 0$ and $r\in \Z$ such that $M(r)\in d^!_{\le n}\D(S)$.
\end{prop}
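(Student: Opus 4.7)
The plan is to verify each assertion on a set of generators of the relevant subcategory, using properness and the projection formula to control composition and Tate twists; the $\mathbb P^1$-bundle calculation driving the second assertion is the key technical input.

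\textbf{First assertion.} The class
\[\mathcal T \;=\; \{M\in \D(S) : (f_!M)^*\in d_{\le n+d}\DM_\gm^\eff(k,\Q)\}\]
is a thick triangulated subcategory of $\D(S)$, since $f_!$ and Verdier duality are triangulated and $d_{\le n+d}\DM_\gm^\eff(k,\Q)$ is thick by construction. It therefore suffices to check that $g_!\Z_X\in\mathcal T$ for every $g:X\to S$ of relative dimension $\le n$. Stratifying $X$ by quasi-projective locally closed pieces and iterating the localisation triangle \eqref{eq3.1} pushed forward to $S$, we may further assume $X$ is quasi-projective, hence embeddable in a smooth $k$-scheme. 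Writing $h=fg:X\to \Spec k$, the identity $f_!g_!\simeq(fg)_!$ and Theorem \ref{t3.2} give $f_!g_!\Z_X\simeq h_!\Z_X\simeq M^{BM}(X)$, so $(f_!g_!\Z_X)^*\simeq M_c(X)$; since $\dim X\le n+d$, Proposition \ref{p1.2} yields $M_c(X)\in d_{\le n+d}\DM_\gm^\eff(k,\Q)$.

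\textbf{Second assertion, key computation.} I claim that for $g:X\to S$ of relative dimension $\le n$, $g_!\Z_X(-1)\in d^!_{\le n+1}\D(S)$. Form the Cartesian square
\[\begin{CD}
X\times_S\mathbb P^1_S @>\pi_2>> \mathbb P^1_S\\
@V{\pi_1}VV @VVpV\\
X @>g>> S,
\end{CD}\]
put $g'=g\circ\pi_1=p\circ\pi_2$ (of relative dimension $\le n+1$), and apply proper base change together with the projection formula:
\[
g'_!\Z_{X\times_S\mathbb P^1_S}
\;\simeq\; g_!(g^*p_!\Z_{\mathbb P^1_S})
\;\simeq\; g_!\Z_X\otimes p_!\Z_{\mathbb P^1_S}.
\]
Since $p$ is smooth proper of relative dimension $1$, \eqref{eq3.4} together with the projective bundle formula give $p_!\Z_{\mathbb P^1_S}\simeq M_S(\mathbb P^1_S)(-1)[-2]\simeq \Z_S\oplus \Z_S(-1)[-2]$, whence $g'_!\Z_{X\times_S\mathbb P^1_S}\simeq g_!\Z_X\oplus g_!\Z_X(-1)[-2]$. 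The left-hand side lies in $d^!_{\le n+1}\D(S)$, so by thickness so does $g_!\Z_X(-1)$. As the $g_!\Z_X$ generate $d^!_{\le n}\D(S)$ and the Tate twist $(-1)$ is a triangulated equivalence, it follows that $N(-1)\in d^!_{\le n+1}\D(S)$ for every $N\in d^!_{\le n}\D(S)$; iterating, $N(-r)\in d^!_{\le n+r}\D(S)$ for all $r\ge 0$.

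\textbf{Second assertion, conclusion.} Let $\mathcal U=\{M\in \D(S):\exists\,n,r,\ M(r)\in d^!_{\le n}\D(S)\}$. It is stable under shifts, direct summands and Tate twists (readjust $r$). For cones: given $M_1,M_2\in\mathcal U$ with $M_i(r_i)\in d^!_{\le n_i}\D(S)$, set $r=\min(r_1,r_2)$ and $s_i=r_i-r\ge 0$; by the previous paragraph $M_i(r)=M_i(r_i)(-s_i)\in d^!_{\le N}\D(S)$ with $N=\max_i(n_i+s_i)$, so the cone $C$ of any morphism $M_1\to M_2$ satisfies $C(r)\in d^!_{\le N}\D(S)$ and hence $C\in\mathcal U$. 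Finally, any smooth $g:X\to S$ of relative dimension $e$ gives $M_S(X)=g_\#\Z_X\simeq g_!\Z_X(e)[2e]$ by \eqref{eq3.4}, so $M_S(X)(-e)\simeq g_!\Z_X[2e]\in d^!_{\le e}\D(S)$ and $M_S(X)\in\mathcal U$. Since $\D(S)$ is generated as a thick triangulated subcategory stable under Tate twists by the $M_S(X)$ for $X\to S$ smooth of finite type, $\mathcal U=\D(S)$. The main obstacle in the argument is the $\mathbb P^1$-bundle step, which is what allows $d^!_{\le n}\D(S)$ to absorb negative Tate twists at the modest cost of raising $n$; without it the second assertion would not follow from purely formal manipulations.
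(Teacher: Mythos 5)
Your proof is correct and follows the same basic route as the paper: for the first assertion, reduce to generators $g_!\Z_X$, identify $(f_!g_!\Z_X)^*=((fg)_!\Z_X)^*$ with $M_c(X)$ via Theorem \ref{t3.2}, and invoke Proposition \ref{p1.2}; for the second, check the claim on the generators $M_S(X)(s)$ with $X/S$ smooth using $g_!\Z_X\simeq M_S(X)(-e)[-2e]$. Your extra care in stratifying $X$ into quasi-projective pieces before applying Theorem \ref{t3.2} (which is only proved for $X$ embeddable in a smooth scheme) is a sensible precaution that the paper's one-line argument skips over.

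The genuinely valuable addition is your $\mathbb P^1$-bundle (twist-absorption) lemma, $N\in d^!_{\le n}\D(S)\Rightarrow N(-1)\in d^!_{\le n+1}\D(S)$. The paper's proof of the second assertion says only ``it suffices to prove the last statement for generators,'' but the class $\mathcal U=\{M:\exists\,n,r,\ M(r)\in d^!_{\le n}\D(S)\}$ is not obviously closed under cones: two objects may require different twists $r_1\neq r_2$ to land in some $d^!_{\le n_i}\D(S)$, and aligning them forces a negative twist on one of the two, which a priori leaves the union $\bigcup_n d^!_{\le n}\D(S)$. Your computation $g'_!\Z_{X\times_S\mathbb P^1_S}\simeq g_!\Z_X\oplus g_!\Z_X(-1)[-2]$ (or the even quicker variant with $\mathbb A^1_S$ in place of $\mathbb P^1_S$, which gives $g'_!\Z_{X\times_S\mathbb A^1_S}\simeq g_!\Z_X(-1)[-2]$ directly) supplies exactly the missing closure property, at the cost of raising $n$ by the number of negative twists absorbed. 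So your write-up is a complete version of an argument the paper leaves partly implicit.
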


\begin{proof} It suffices to show that $(f_!g_!\Z_X)^*\in d_{\le n+d}\DM^\eff(S,\Q)$ when $\dim g\le n$. By Theorem \ref{t3.2}, this is $M_c(X)$; the claim then follows from Proposition \ref{p1.2}. It suffices to prove the last statement for generators $M_S(X)(s)$, with $g:X\to S$ smooth. But $g_!\Z_X\simeq M_S(X)(-n)[-2n]$ if $n=\dim g$ as in \eqref{eq3.4}, so $M_S(X)(s)(-n-s)\in d^!_{\le n}\D(S)$.
\end{proof}

% Then $i_x^*M\in d_{\le n}\DM_\gm(\kappa(x),\Q)$ for any $x\in S_{(0)}$. By Proposition \ref{p3}, $\zeta(i_x^*M(r),s)$ converges absolutely for $\Re(s)>n$, hence $\zeta(i_x^*M,s)=\zeta(i_x^*M(r),s-r)$ converges absolutely for $\Re(s)>n+r$, and finally $\zeta(M,s)$ converges absolutely in this range.

For any $x\in S_{(0)}$, we have a specialisation functor
\[\Sp_x=(f_x)_*i_x^*:\D(S)\to \DM_\gm(\F_q,\Q)\]
where $f_x:\Spec \kappa(x)\to \Spec \F_q$ is the structural morphism. We have
\begin{equation}\label{eq3.3}
\zeta(i_x^*M,s)= \zeta(\Sp_x M,s)
\end{equation}
by Corollary \ref{c1}.

%This, plus Corollary \ref{c1} d), shows that the right hand side of the following definition makes sense:

From Lemma \ref{l2.2}, we get:

\begin{lemma}\label{l3.1} For any $M\in \D(S)$ and any $n\ge 1$, the function
\[S_{(0)}\ni x\mapsto \sharp_n(\Sp_x M) %\tau_x(M,m) = \tr(F_{sp_xM}^m)
\]
is $0$ outside the finite set $\bigcup_{m\mid n} S(\F_{q^m})$.
\end{lemma}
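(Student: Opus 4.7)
The plan is to read this lemma off directly from Lemma \ref{l2.2}, together with an elementary reparameterization of the exceptional set.

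\textbf{Step 1: Finiteness.} A closed point $x\in S_{(0)}$ lies in $S(\F_{q^m})$ if and only if $\deg(x):=[\kappa(x):\F_q]$ divides $m$, whence
\[\bigcup_{m\mid n}S(\F_{q^m})\;=\;\{x\in S_{(0)}:\deg(x)\mid n\}.\]
Since $S$ is of finite type over $\F_q$, each $S(\F_{q^r})$ is a finite set, so this finite union of finite sets is finite.

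\textbf{Step 2: Vanishing.} Fix $x\in S_{(0)}$ with $d:=\deg(x)\nmid n$, set $E=\kappa(x)$ and $N=i_x^*M\in\DM_\gm(E,\Q)$, so that $\Sp_xM=(f_x)_*N$ with $f_x:\Spec E\to\Spec\F_q$ of degree $d$. Apply Lemma \ref{l2.2} to $(f_x,N)$, with $m$ there equal to $d$: the hypothesis $d\nmid n$ places us in the first (vanishing) branch, giving $\sharp_n(\Sp_xM)=0$, which is what we need.

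\textbf{Where the real content sits.} The substantive ingredient is the matrix/cyclic-shift computation already carried out inside the proof of Lemma \ref{l2.2}: after pullback along $f_x^*$ one has the decomposition
\[f_x^*(f_x)_*N\;\simeq\;\bigoplus_{r=0}^{d-1}(\phi_E^r)^*N,\]
on which $f_x^*F_{\Sp_xM}$ acts by a cyclic shift of the $d$ summands (twisted by the relative Frobenius $\phi$). For $d\nmid n$, the endomorphism $(f_x^*F_{\Sp_xM})^n$ is then \emph{entirely off-diagonal} with respect to this decomposition, so by the additivity of the categorical trace on direct sums its trace is zero; monoidality of $f_x^*$ transports this to $\sharp_n(\Sp_xM)=\tr((f_x^*F_{\Sp_xM})^n)=0$. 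I do not foresee any genuine obstacle: Lemma \ref{l3.1} is essentially a packaging corollary of Lemma \ref{l2.2}, the only new point being the set-theoretic bookkeeping of Step 1.
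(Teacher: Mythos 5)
Your proof is correct and is exactly the paper's argument: the paper deduces Lemma \ref{l3.1} immediately from Lemma \ref{l2.2}, applied with $E=\kappa(x)$ and $m=\deg(x)$, the exceptional set being precisely the closed points whose degree divides $n$. Your unpacking of the cyclic-shift/off-diagonal mechanism behind the vanishing branch is just a restatement of the proof of Lemma \ref{l2.2} itself, so nothing genuinely new is needed.
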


This gives a meaning to the following proposition. For simplicity, we write
\begin{equation}\label{eq3.2}
\sharp_n^*(M)=\sharp_n(M^*) = \sharp_{-n}(M)
\end{equation}
for $M\in \DM_\gm(\F_q)$.

\begin{prop}[trace formula]\label{p3.2} We have
\[\sharp^*_n(f_!M) =\sum_{x\in S_{(0)}} \sharp^*_n(\Sp_x M)\] 
for any $M\in \D(S)$ and any $n\ge 1$.
\end{prop}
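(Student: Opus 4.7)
The plan is to transport both sides of the identity to the $l$-adic world via a realisation functor, where the Grothendieck--Lefschetz trace formula will supply the equality. Let $\rho_l : \D(S) \to D^b_c(S,\Q_l)$ be an $l$-adic realisation commuting with the six operations (for instance the one of \cite{ayoubetale}). As a strong $\otimes$-functor it preserves categorical traces, and in view of Example \ref{ex2.1} together with the fact that geometric Frobenius $\phi \in Gal(\bar\F_q/\F_q)$ acts on $\Q_l(1)$ by $q^{-1}$, one obtains for every $N \in \DM_\gm(\F_q,\Q)$ the identification
\[\sharp_n^*(N) = \tr(F_N^{-n}) = \tr(\phi^n \mid \rho_l N).\]

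First I would handle the left-hand side. Using that $\rho_l$ commutes with $f_!$, so $\rho_l(f_!M) = R\Gamma_c(S_{\bar\F_q},\rho_l M)$, we get $\sharp_n^*(f_!M) = \tr(\phi^n \mid R\Gamma_c(S_{\bar\F_q},\rho_l M))$. Grothendieck's trace formula \cite[Exp.~III]{SGA5} then yields
\[\sharp_n^*(f_!M) = \sum_{y \in S(\F_{q^n})} \tr(\phi^n \mid (\rho_l M)_{\bar y}).\]
Regrouping the $\F_{q^n}$-rational points by their image in $S_{(0)}$: a closed point $x$ of degree $d_x$ lifts to exactly $d_x$ such rational points when $d_x \mid n$ and to none otherwise, and for every lift $y$ the stalk $(\rho_l M)_{\bar y}$ coincides with $(\rho_l M)_{\bar x}$ with the same $\phi^n$-action. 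Hence the sum collapses to
\[\sharp_n^*(f_!M) = \sum_{x\in S_{(0)},\, d_x\mid n} d_x\,\tr(\phi^n \mid (\rho_l M)_{\bar x}).\]

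Next I would tackle the right-hand side. For each $x \in S_{(0)}$, the object $\rho_l(\Sp_x M) = R(f_x)_*(i_x^*\rho_l M)$ is the $Gal(\bar\F_q/\F_q)$-representation induced from the $Gal(\bar\F_q/\kappa(x))$-module $(\rho_l M)_{\bar x}$. The standard character formula for induced representations --- the Galois-theoretic shadow of Lemma \ref{l2.2} --- evaluates $\tr(\phi^n \mid \rho_l(\Sp_x M))$ as $d_x\,\tr(\phi^n \mid (\rho_l M)_{\bar x})$ when $d_x \mid n$ and as $0$ otherwise. Summing over $x\in S_{(0)}$, which is a finite sum by Lemma \ref{l3.1}, reproduces exactly the expression found for the left-hand side.

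The hard part, as noted in the remark just before the statement, is that one cannot remain purely motivic. A d\'evissage starting from the case $M = g_!\Z_X$ --- where the identity reduces to Theorem \ref{p1} applied to $X$ and to its fibres $X_x$ over closed points of $S$ --- together with additivity on exact triangles handles a generating family, but does \emph{not} propagate through arbitrary direct summands, since both sides are additive but the difference is not obviously controlled on idempotent complements. Invoking $\rho_l$ and the trace formula of \cite{SGA5} is precisely what sidesteps this idempotent-completion obstruction, at the cost of depending on Ayoub's $l$-adic realisation.
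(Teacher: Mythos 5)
Your proof is correct and follows the same route as the paper's general argument: transport everything through the $l$-adic realisation (using that it commutes with $f_!$, $i_x^*$ and preserves traces) and invoke the Grothendieck--Lefschetz trace formula of SGA5, with your closing paragraph matching the paper's own explanation of why the purely motivic d\'evissage from $M=g_!\Z_X$ fails at the idempotent-completion step. The only difference is presentational: you unwind the trace formula explicitly as a sum over $\F_{q^n}$-points regrouped by closed points and match it against the induced-representation character formula, where the paper simply chains the commutation isomorphisms and cites SGA5.
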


\begin{proof}We give two proofs, one in a special case and one in general: 

1) The case where $M=g_!\Z_X$ for $g:X\to S$ a morphism of finite type, where $X$ is embeddable in a smooth $k$-scheme. Then
\[\sharp_n^*(f_!M) = \sharp_n^*((fg)_!\Z_X) = \sharp_n^*(M^{BM}(X)) =\sharp_n(M_c(X)) = |X(\F_{p^n})|\]
by Theorems \ref{t3.2} and \ref{p1}. On the other hand, let $X_x=g^{-1}(x)$. The base change theorem \cite[scholie 1.4.2 1)]{ayoub} applied to the Cartesian square
\begin{equation}\label{eq3.5}
\begin{CD}
X_x@>I_x>> X\\
@Vg_x VV @Vg VV\\
x@>i_x>> S
\end{CD}
\end{equation}
yields\footnote{This is part of the definition of a crossed functor, for which the reader should consult \cite[Déf. 1.21.12]{ayoub}.}
\[i_x^*g_!\Z_X\simeq (g_x)_! I_x^*\Z_X =(g_x)_! \Z_{X_x}\]
and we get
\begin{multline*}
\sum_{x\in S_{(0)}} \sharp_n^*(\Sp_x M)=\sum_{x\in S_{(0)}} \sharp_n^*((f_x)_*i_x^* g_! \Z_X)\\
=\sum_{x\in S_{(0)}} \sharp_n^*((f_x)_!i_x^* g_! \Z_X)
=\sum_{x\in S_{(0)}} \sharp_n^*((f_xg_x)_! \Z_{X_x})
\end{multline*}
where, as above
\[\sharp_n^*((f_xg_x)_! \Z_{X_x}) =  |X_x(\F_{p^n})|.\]

Now it is clear that $|X(\F_{p^n})|=\sum_{x\in S_{(0)}} |X_x(\F_{p^n})|$.

From 1), one can derive Proposition \ref{p3.2} for those $M$'s which belong to the smallest triangulated subcategory containing the said $g_!\Z_X$ and stable under Tate twists. By Proposition \ref{p3.3}, its pseudo-abelian envelope equals $\D(S)$; but then we run into the problem of idempotents mentioned before the proof of Theorem \ref{p1}. This leads us to the second proof:

2) Apply the $l$-adic realisation functor $R_l$.
\begin{multline*}
\sum_{x\in S_{(0)}} \sharp_n^*(\Sp_x M) = \sum_{x\in S_{(0)}} \sharp_n^*(R_l(\Sp_x M)) =  \sum_{x\in S_{(0)}} \sharp_n^*(\Sp_x  R_l(M)) \\
= \sharp_n^*(Rf_!R_l(M)) =\sharp_n^*(R_l(f_!M)) =\sharp_n^*(f_!M)
\end{multline*} 
where we used the commutation of $R_l$ with $f_!$, $i_x^*$ and duality, plus \cite[equation (2) p. 470]{SGA5}.
\end{proof}

\begin{cor}\label{c3.1}  If $S(\F_{q^n})=\emptyset$, then $\sharp_n^*(f_!M)=0$ for any $M\in \D(S)$.
\end{cor}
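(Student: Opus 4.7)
The plan is to combine the trace formula of Proposition~\ref{p3.2} with a dual analogue of Lemma~\ref{l3.1}. By Proposition~\ref{p3.2},
\[\sharp_n^*(f_!M)=\sum_{x\in S_{(0)}}\sharp_n^*(\Sp_x M),\]
so it suffices to show every summand vanishes under the hypothesis $S(\F_{q^n})=\emptyset$.

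First I would record the elementary divisibility observation that translates the geometric hypothesis into one on residue fields: if $S(\F_{q^n})=\emptyset$, then for no closed point $x\in S_{(0)}$ can $m_x:=[\kappa(x):\F_q]$ divide $n$. Indeed, $m_x\mid n$ would yield an embedding $\kappa(x)\hookrightarrow\F_{q^n}$, and composing $\Spec\F_{q^n}\to\Spec\kappa(x)\to S$ would give an element of $S(\F_{q^n})$, a contradiction. So the corollary follows once I establish $\sharp_n^*(\Sp_x M)=0$ whenever $m_x\nmid n$.

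The key step is that last vanishing, which is the dual of Lemma~\ref{l3.1}. Writing $f_x:\Spec\kappa(x)\to\Spec\F_q$, the morphism $f_x$ is finite étale, so $(f_x)_!=(f_x)_*$ and duality commutes with $(f_x)_*$. Hence
\[(\Sp_x M)^* = \bigl((f_x)_* i_x^*M\bigr)^*\simeq (f_x)_*\bigl((i_x^*M)^*\bigr)\]
in $\DM_\gm(\F_q,\Q)$, so that $\sharp_n^*(\Sp_x M)=\sharp_n\bigl((f_x)_*(i_x^*M)^*\bigr)$. The argument behind Lemma~\ref{l3.1} — which is Lemma~\ref{l2.2} applied to an arbitrary object of $\DM_\gm(\kappa(x),\Q)$, here $(i_x^*M)^*$ — then yields $0$ as soon as $m_x\nmid n$.

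The only non-formal ingredient is the duality compatibility $(\Sp_x M)^*\simeq (f_x)_*(i_x^*M)^*$, but this is a direct instance of $D\circ f_!\simeq f_*\circ D$ combined with $f_!=f_*$ for finite étale $f$ in Ayoub's six-functor formalism, so it amounts to a citation. I do not anticipate any real obstacle: modulo this reference, the corollary reduces to the trace formula together with a counting argument on residue-field degrees.
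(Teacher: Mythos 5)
Your proof is correct and follows essentially the same route as the paper: the trace formula of Proposition~\ref{p3.2} reduces everything to the vanishing of $\sharp_n^*(\Sp_x M)$ when $[\kappa(x):\F_q]\nmid n$, which is the content of Lemma~\ref{l3.1} (itself Lemma~\ref{l2.2}). The only difference is that you explicitly supply the duality step $(\Sp_x M)^*\simeq (f_x)_*\bigl((i_x^*M)^*\bigr)$ needed to pass from the $\sharp_n$ of Lemma~\ref{l3.1} to the $\sharp_n^*$ of the statement, a point the paper leaves implicit.
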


\begin{proof} This follows from Lemma \ref{l3.1} and Proposition \ref{p3.2}.
\end{proof}

\begin{qns} 1) Conversely, Corollary \ref{c3.1} implies Proposition \ref{p3.2} by the localisation exact triangle \eqref{eq3.1}, even for $S=\P^1-\P^1(\F_{q^n})$ by the dévissages of \cite[XV.3.2]{SGA5}. Can one find a proof which avoids the $l$-adic realisation and the trace formula of \cite{SGA5}?\\
2) The statement breaks down for $n=0$, because the right hand side is generally infinite (\emph{e.g.} for $M=\Z_S$). I don't know a formula for $\chi(f_! M)$, which appears in the functional equation just below. Can one use some renormalisation trick?
\end{qns}

\begin{thm} \label{t3.1} a) One has $\zeta(M,s)=\zeta(f_!M,s)$. In particular, there exists $Z(M,t)\in \Q(t)$ such that $\zeta(M,s)=Z(M,q^{-s})$; the zeroes and poles of $Z(M,T)$ are Weil $q$-numbers.\\
b) If $M\in d^!_{\le n} \D(S)$ (see Proposition \ref{p3.3}), these Weil $q$-numbers are effective of weights $\le 2(n+d)$, where $d=\dim S$.\\ 
c) If $S$ is projective, one has the functional equation
\[\zeta(D_S(M),-s)=(-q^{-s})^{\chi(f_!M)}\det(F_{f_!M})^{-1}\zeta(M,s)\]
where $D_S(M)=\uHom(M,f^!\Z)$. If $S$ is moreover smooth of dimension $d$, one has the functional equation
\[\zeta(M^*,d-s)=(-q^{-s})^{\chi(f_!M)}\det(F_{f_!M})^{-1}\zeta(M,s)\]
with $M^*:=\uHom(M,\Z)$.
\end{thm}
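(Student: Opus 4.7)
The plan is to deduce everything from the three pieces of machinery already set up: the convergent Euler product definition of $\zeta$ on $S$, the comparison \eqref{eq3.3} between stalks and specialisations, and the trace formula in Proposition \ref{p3.2}. The main step is (a); parts (b) and (c) are then essentially transported from the corresponding statements over $\F_q$.

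For (a), work inside $1+s\Q[[q^{-s}]]\subset \Dir$ and take formal logarithms, which is legitimate in this topology. By definition of the Euler product and \eqref{eq3.3},
\[\log\zeta(M,s)=\sum_{x\in S_{(0)}}\log\zeta(\Sp_x M,s)=\sum_{n\ge 1}\Bigl(\sum_{x\in S_{(0)}}\sharp_n^*(\Sp_x M)\Bigr)\frac{q^{-ns}}{n},\]
where the inner sum is finite for each $n$ by Lemma \ref{l3.1}. The trace formula (Proposition \ref{p3.2}) identifies it with $\sharp_n^*(f_! M)$, so $\log\zeta(M,s)=\log\zeta(f_! M,s)$, hence $\zeta(M,s)=\zeta(f_! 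M,s)$. Since $f_! M\in\DM_\gm(\F_q,\Q)$, the rationality in $q^{-s}$ and the Weil-number property of the zeros and poles are immediate from Corollary \ref{c1}, applied with $Z(M,t):=Z(f_! M,t)$.

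For (b), the hypothesis $M\in d^!_{\le n}\D(S)$ combined with Proposition \ref{p3.3} yields $(f_! M)^*\in d_{\le n+d}\DM_\gm^\eff(\F_q,\Q)$. Proposition \ref{p3} then gives that the zeros and poles of $Z(f_! M,t)$ are effective Weil $q$-numbers of weights in $[0,2(n+d)]$; by (a) these are the zeros and poles of $Z(M,t)$.

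For (c), assume $S$ projective, so $f$ is proper and $f_!=f_*$. The relative duality identity $(f_! M)^*=f_!D_S(M)$ from Ayoub's formalism (writing $\omega_S=f^!\Z$ and $D_S(M)=\uHom(M,f^!\Z)$) gives, together with (a),
\[\zeta(D_S(M),s)=\zeta(f_!D_S(M),s)=\zeta((f_! M)^*,s).\]
The functional equation of Theorem \ref{t2} b) applied to $f_! M$ reads, after the substitution $t=q^{-s}$,
\[\zeta((f_! M)^*,-s)=(-q^{-s})^{\chi(f_! M)}\det(F_{f_! M})^{-1}\zeta(f_! M,s),\]
and using $\zeta(f_! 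M,s)=\zeta(M,s)$ once more gives the first displayed equation. If moreover $S$ is smooth of dimension $d$, absolute purity \cite[Scholie 1.4.2]{ayoub} identifies $f^!\Z\simeq \Z(d)[2d]$, so $D_S(M)\simeq M^*(d)[2d]$; the shift/twist formulas of Corollary \ref{c1} c) (noting that $[2d]$ is trivial on $\zeta$ and that $(d)$ shifts the argument by $d$) convert $\zeta(D_S(M),-s)$ into $\zeta(M^*,d-s)$, producing the second functional equation.

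The step I expect to be the main conceptual hurdle is invoking relative duality correctly in (c): one has to know that in the categories $\D(S)$ used here, the identification $(f_! M)^\vee\simeq f_* D_S(M)$ holds with $f_*=f_!$ for proper $f$, and that under absolute purity $f^!\Z\simeq \Z(d)[2d]$ in the smooth case. Everything else is a bookkeeping exercise on the logarithmic side of the Euler product, made rigorous by Lemma \ref{l3.1} and Proposition \ref{p3.2}.
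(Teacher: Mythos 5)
Your proposal is correct and follows the same route as the paper: part (a) via the trace formula of Proposition \ref{p3.2} combined with \eqref{eq3.3} (the paper leaves the logarithmic rearrangement, which Lemma \ref{l3.1} justifies, implicit), part (b) via Propositions \ref{p3.3} and \ref{p3}, and part (c) via the duality isomorphism $f_!D_S(M)\simeq (f_!M)^*$ for proper $f$ together with the functional equation over $\F_q$ and relative purity $f^!\simeq f^*(d)[2d]$ in the smooth case. The only discrepancy is the exponent on $\det(F_{f_!M})$, where the paper is itself inconsistent between Theorem \ref{t2} b) and Corollary \ref{c1} b); your choice matches the statement being proved.
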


\begin{proof} a) follows from Proposition \ref{p3.2} and \eqref{eq3.3}. b) follows from Propositions \ref{p3.3} and  \ref{p3}. For c), we have the isomorphism $f_!D_S(M)\simeq (f_*M)^*\simeq (f_!M)^*$ by \cite[Th. 2.3.75 and Scholie 1.4.2 4]{ayoub}, hence
\begin{multline*}
\zeta(D_S(M),-s)= \zeta((f_!M)^*,-s) = (-q^{-s})^{\chi(f_!M)}\det(F_{f_!M})\zeta(f_!M,s)\\
= (-q^{-s})^{\chi(f_!M)}\det(F_{f_!M})\zeta(M,s)
\end{multline*}

The smooth case follows, since $f^!\simeq f^*(d)[2d]$ then (the adjoint identity to the one of \eqref{eq3.4}).
\end{proof}

\begin{rks} 1) Let $\D^\eff(S)$ be as in the proof of Proposition \ref{p3.1}. If $M\in d_{\le n} \D^\eff(S)$, Theorem \ref{t3.1} a) may be refined: the zeroes and poles of $\zeta(M,s)$ are effective Weil numbers of weights $\le 2n$.\\
2) Theorem \ref{t3.1} b) extends to $S$ proper by \cite{cis-deg}.\\
3) I don't know a formula for $\zeta(D_S(M),-s)$ when $S$ is not proper.
\end{rks}

\subsection{The general case} We come back to the situation where $f:S\to \Spec \Z$ is an arbitrary $\Z$-scheme of finite type. We write $d$ for its relative dimension, \emph{i.e} the maximal dimension of its closed fibres.

\begin{thm}\label{t3.3} %a) If $M\in d^!_{\le n} \D(S)$ (see Proposition \ref{p3.3}) the Dirichlet series $\zeta(M,s)$ converges absolutely for $\Re(s)>n+d+1$. For a general $M\in \D(S)$, it converges in some upper half plane.\\
Let $f:S\to T$ be a morphism of $\Z$-schemes of finite type. Then $\zeta(M,s) = \zeta(f_!M,s)$ for any $M\in \D(S)$ (as formal Dirichlet series).
\end{thm}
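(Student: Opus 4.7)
The plan is to reduce from the general base $T$ to the case of Theorem \ref{t3.1} a), fibre by fibre over the closed points of $T$. Both sides are elements of $\Dir$ by Proposition \ref{p3.1}, so it suffices to identify the Euler factors after a suitable rearrangement.

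First, for each $y \in T_{(0)}$ the residue field $\kappa(y)$ is finite (any closed point of a $\Z$-scheme of finite type has finite residue field), and the fibre $S_y := S \times_T \Spec \kappa(y)$ is a scheme of finite type over $\kappa(y)$. Form the Cartesian square
\[\begin{CD}
S_y @>I_y>> S\\
@Vf_yVV @VfVV\\
\Spec\kappa(y)@>i_y>> T
\end{CD}\]
and apply the base change isomorphism from \cite[Scholie 1.4.2 1)]{ayoub} to get $i_y^* f_! M \simeq (f_y)_! I_y^* M$ in $\DM_\gm(\kappa(y),\Q)$. Now I would apply Theorem \ref{t3.1} a) to the $\F_{N(y)}$-scheme $f_y:S_y\to \Spec \kappa(y)$ and the motive $I_y^*M \in \D(S_y)$, obtaining
\[\zeta(i_y^* f_!M, s) \;=\; \zeta\bigl((f_y)_! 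I_y^*M, s\bigr) \;=\; \zeta(I_y^*M, s) \;=\; \prod_{x\in (S_y)_{(0)}} \zeta(i_x^{S_y,*} I_y^* M, s).\]

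The next step is a set-theoretic bookkeeping: closed points of $S$ (resp.\ of $S_y$) are exactly the points with finite residue field, and since $\kappa(f(x))\subseteq \kappa(x)$, the morphism $f$ sends $S_{(0)}$ into $T_{(0)}$. The topological description of $S_y$ then gives a bijection between $(S_y)_{(0)}$ and the set of $x\in S_{(0)}$ with $f(x)=y$, preserving residue fields; via this bijection $i_x^{S_y,*}I_y^* \simeq i_x^*$. Partitioning $S_{(0)}=\bigsqcup_{y\in T_{(0)}}(S_y)_{(0)}$ and assembling yields formally
\[\zeta(f_!M,s) \;=\; \prod_{y\in T_{(0)}}\zeta(i_y^* f_!M,s) \;=\; \prod_{y\in T_{(0)}}\prod_{x\in (S_y)_{(0)}}\zeta(i_x^*M,s) \;=\; \prod_{x\in S_{(0)}}\zeta(i_x^*M,s) \;=\; \zeta(M,s).\]

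The only delicate point is justifying the rearrangement of products inside $\Dir$: this is harmless because, for the topology used in Proposition \ref{p3.1}, only finitely many factors contribute to any fixed coefficient (finiteness of $\{x\in S_{(0)}\mid N(x)\le C\}$), and the double-index partition is finite at each level. The main obstacle is really psychological more than technical: one must trust that the six-functor base change isomorphism $i_y^* f_! \simeq (f_y)_! I_y^*$ is genuinely available in $\D(-)$, and that the positive-characteristic input Theorem \ref{t3.1} a) applies fibrewise; once these are accepted, the statement is a reorganisation of the Euler product.
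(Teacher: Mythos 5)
Your proposal is correct and is essentially the paper's proof: the paper first reduces to $T=\Spec \Z$ (so the closed fibres are indexed by primes $p$) and then applies the base change isomorphism $i_p^*f_!\simeq (f_p)_!I_p^*$ together with Theorem \ref{t3.1} a) fibrewise, exactly as you do over a general closed point $y\in T_{(0)}$. Your version merely skips that preliminary reduction and handles the bookkeeping of closed points over $T$ directly, which is a cosmetic rather than a substantive difference.
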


In Corollary \ref{c6.1} below, this Dirichlet series will be shown to be convergent.

\begin{proof} We immediately reduce to the case $S=\Spec \Z$.  For a prime number $p$, let $S_p$ be the fibre of $f$ at $p$; we have a Cartesian square similar to \eqref{eq3.5}:
\begin{equation}\label{eq3.6}
\begin{CD}
S_p@>I_p>> S\\
@Vf_p VV @Vf VV\\
\Spec \F_p@>i_p>> \Spec \Z.
\end{CD}
\end{equation}

 The equality
\[\zeta(M,s) = \prod_p \zeta(I_p^*M,s)\]
follows from the definition. %By Theorem \ref{t3.1} a) and b), $\zeta(I_p^*M,s)$ is a rational function of $p^{-s}$ whose zeroes an poles have absolute values $\le p^{d+n}$. We can therefore apply Lemma \ref{l1} below, which gives the conclusion.  The general case follows from the last assertion of Proposition \ref{p3.3}.

 By Theorem \ref{t3.1} a) and proper base change, we have 
\[\zeta(I_p^*M,s)=\zeta((f_p)_!I_p^*M,s)=\zeta(i_p^*f_!M,s)\]
hence
\[\zeta(M,s) = \prod_p \zeta(i_p^*f_!M,s)=\zeta(f_!M,s)\]
again by definition.
\end{proof}

\section{Motives over $\R$ and $\C$}

\subsection{Hodge structures} Set
\[\Gamma_\R(s) = \pi^{-s/2}\Gamma(s/2),\qquad \Gamma_\C(s) = 2(2\pi)^{-s}\Gamma(s)\]
where $\Gamma(s)$ is Euler's Gamma function. (For convenience, we take for $\Gamma_\C(s)$ twice the function in Serre \cite[\S 3]{serre}.)

If $V$ is a (pure) complex Hodge structure (i.e., a finite-dimensional $\C$-vector space
provided with a decomposition $V=\bigoplus_{(p,q)\in \Z\times\Z} V^{p,q}$), one defines
\[\Gamma(V,s) = \prod_{(p,q)} \Gamma_\C(s-\inf(p,q))^{h(p,q)}\]
with $h(p,q)=\dim V^{p,q}$.

If $V$ is a (pure) real Hodge structure (i.e. a complex Hodge structure plus an involution
$\sigma$ such that $\sigma V^{p,q} = V^{q,p}$), one defines
\[\Gamma(V,s) = \prod_n \Gamma_\R(s-n)^{h(n,+)}\Gamma_\R(s-n+1)^{h(n,-)}\prod_{p<q}
\Gamma_\C(s-p)^{h(p,q)}\] where $h(p,q)$ is as above and $h(n,\epsilon) = \dim(V^{n,n}\mid \sigma =
(-1)^n\epsilon)$.

\subsection{Pure motives} If $k=\R$ or $\C$, we have a realisation functor
$H:\sM_\hom(k,\Q)\to \Hodge_k$, where $\Hodge_k$ is the category of pure $k$-Hodge structures.
For $M\in \sM_\hom(k)$, we define
\[\Gamma(M,s) = \Gamma(H(M)^*,s).\]

\subsection{Triangulated motives} Using Bondarko's isomorphism 
\[K_0(\Chow(k,\Q))\iso K_0(\DM_\gm(k,\Q))\] 
we may extend the above definition to all objects of $\DM_\gm(k,\Q)$ (alternately, we could go
through the Hodge realisation on $\DM_\gm(k,\Q)$).

The identities of Corollary \ref{c1} c) hold for $\Gamma(M,s)$.

\begin{rk} One should compare this definition with the much more sophisticated one for mixed motives in \cite[III.1]{fpr}, using \emph{mixed} Hodge structures.
\end{rk}

\section{Two elementary lemmas on Dirichlet series}

\begin{lemma}\label{l1} Let $(R_p)$ be a sequence of rational functions with complex
coefficients, indexed by the prime numbers. We assume that $R_p(0)=1$ for all $p$ and:
\begin{thlist}
\item There is an integer $w$ such that, for almost all $p$, the inverse zeroes and poles of
$R_p$ have absolute value $\le p^{w/2}$.\\
\item The heights of the $R_p$ are bounded independently of $p$ (N.B.: here, the \emph{height}
of a rational function $R=P/Q$ is $\deg(P)+\deg(Q)$ if $P$ and $Q$ are coprime polynomials).
\end{thlist}
Let $L(s)=\prod_p R_p(p^{-s})$. Then $L(s)$ is a Dirichlet series with absolute convergence abscissa $\le w/2+1$.
\end{lemma}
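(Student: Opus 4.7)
The plan is to exploit the fact that each $R_p$ factors as a ratio of linear factors $(1-\alpha T)$, $(1-\beta T)$, with the total number of factors uniformly bounded by hypothesis (ii) and the sizes of the $\alpha,\beta$ controlled by hypothesis (i). First I would write
\[R_p(T) = \frac{\prod_{i=1}^{m_p}(1-\alpha_{p,i}T)}{\prod_{j=1}^{n_p}(1-\beta_{p,j}T)}\]
in lowest terms; the normalisation $R_p(0)=1$ forces all $\alpha_{p,i},\beta_{p,j}$ to be nonzero and absorbs the leading constant. Hypothesis (ii) then gives $m_p+n_p\le H$ for a constant $H$ independent of $p$, while (i) gives $|\alpha_{p,i}|,|\beta_{p,j}|\le p^{w/2}$ for all but finitely many $p$.

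Next I would dispose of the formal/Dirichlet-series issue. Since each factor $R_p(p^{-s})$ is a power series in $p^{-s}$ with constant term $1$, the formal product $\prod_p R_p(p^{-s})$ expands, by unique factorisation in $\Z$, as $\sum_{n\ge 1}c_n n^{-s}$ with $c_1=1$ and $c_n$ a finite product over $p\mid n$. So the only nontrivial point is analytic convergence.

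The core step is a logarithmic estimate. For $\Re(s)>w/2$ and $p$ large enough that (i) applies and $|\alpha_{p,i}p^{-s}|,|\beta_{p,j}p^{-s}|<1$, the Taylor expansion
\[\log R_p(p^{-s})=\sum_{n\ge 1}\frac{p^{-ns}}{n}\Bigl(\sum_j\beta_{p,j}^n-\sum_i\alpha_{p,i}^n\Bigr)\]
is valid, and combining (i) and (ii) yields the term-by-term bound
\[|\log R_p(p^{-s})|\le 2H\sum_{n\ge 1}\frac{p^{-n(\Re(s)-w/2)}}{n}=-2H\log\bigl(1-p^{-(\Re(s)-w/2)}\bigr),\]
which is $O(p^{-(\Re(s)-w/2)})$ as $p\to\infty$. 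Summing over $p$ and comparing with $\zeta(\Re(s)-w/2)$, the series $\sum_p|\log R_p(p^{-s})|$ converges as soon as $\Re(s)-w/2>1$. Therefore the Euler product converges absolutely for $\Re(s)>w/2+1$, which transfers to absolute convergence of the Dirichlet series $\sum c_n n^{-s}$ in the same half-plane.

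The only obstacle is a minor bookkeeping one: handling uniformly the finitely many primes $p$ where (i) fails (and, for the remaining primes, the first few values of $n$ where the elementary inequality $|{-}\log(1-z)|\le 2|z|$ requires $|z|\le 1/2$). These exceptional contributions merely multiply the Euler product by a finite factor and leave the abscissa $w/2+1$ unchanged.
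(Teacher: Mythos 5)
Your proof is correct and is in substance the paper's argument in logarithmic form: both rest on factoring $R_p$ into at most $H$ linear factors whose inverse roots have modulus $\le p^{w/2}$, and on comparing the resulting product with a fixed power of $\zeta(\sigma-w/2)$, which converges for $\sigma>w/2+1$. The paper skips the logarithm and instead majorizes each local factor coefficient by coefficient, bounding $|(1-\lambda p^{-s})^{\pm 1}|$ by $\sum_{n\ge 0}p^{-n(\sigma-w/2)}$ and hence $|L(s)|$ by $\zeta(\sigma-w/2)^{H}$. The one place your version deserves an extra line is the final ``transfer'': absolute convergence of the Euler product, i.e.\ of $\sum_p|\log R_p(p^{-s})|$, does not by itself give absolute convergence of the Dirichlet series $\sum_n |c_n|n^{-\sigma}$; one needs domination at the level of coefficients, e.g.\ that the coefficients of $R_p(p^{-s})$ as a power series in $p^{-s}$ are dominated in absolute value by those of $\exp\bigl(2H\sum_{n\ge 1}p^{n(w/2-\sigma)}/n\bigr)=(1-p^{-(\sigma-w/2)})^{-2H}$, after which the product of these majorants is $\zeta(\sigma-w/2)^{2H}<\infty$. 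Since your term-by-term bound on the coefficients of $\log R_p(p^{-s})$ supplies exactly this domination, the omission is cosmetic rather than a genuine gap.
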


\begin{proof} Let $\lambda$ be an inverse pole of $R_p$. Then, for $s=\sigma + it\in \C$:
\[\left|(1-\lambda p^{-s})^{-1}\right|=\left|\sum_{n= 0}^\infty \lambda^n p^{-ns}\right|\le \sum_{n= 0}^\infty p^{-n(\sigma-w/2)}\]
converging as soon as $\sigma > w/2$.
If now $\lambda$ is an inverse zero of $R_p$, we have
\[\left|1-\lambda p^{-s}\right|\le 1+ |p^{w/2-s}|\le \sum_{n= 0}^\infty p^{-n(\sigma-w/2)}.\]

Thus, if the height of $R_p$ is $\le H$, we have
\[\left|R_p(p^{-s})\right|\le \left(\sum_{n= 0}^\infty p^{-n(\sigma-w/2)}\right)^H=\left(1-p^{-(\sigma-w/2)}\right)^{-H}.\]

Collecting, we find
\[\left|L(s)\right|\le
\left(\prod_p\left(1-p^{-(\sigma-w/2)}\right)^{-1}\right)^H=\zeta(\sigma-w/2)^H\] which
converges for $\sigma-w/2>1$, as is well-known.
\end{proof}

\begin{lemma}\label{l2} a) Let $f=\sum_{n= 1}^\infty a_n n^{-s}$ be a convergent Dirichlet series
with complex coefficients, with $a_1=1$. Then the equation $f(s)= g(s)/g(s+1)$ has a unique
solution as a convergent Dirichlet series, namely
\[g(s) = \prod_{m= 0}^\infty f(s+m).\]
Moreover, $g$ has the same absolute convergence abscissa as $f$.\\
b) If the coefficients of $f$ belong to a subring  $R$ of $\C$, so do those of $g$. 
\end{lemma}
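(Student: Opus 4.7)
The plan is to construct $g$ directly via the product formula, verify convergence and the functional identity, then handle uniqueness and the coefficient property.

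\emph{Construction as a formal Dirichlet series.} Each factor $f(s+m)=1+\sum_{n\ge 2}a_n n^{-m}n^{-s}$ has constant term $1$. The coefficient of $n^{-s}$ in the partial product $g_M(s):=\prod_{m=0}^M f(s+m)$ is a sum over tuples $(n_0,\ldots,n_M)$ with $\prod_i n_i=n$ of $\prod_i a_{n_i}n_i^{-i}$; for $n\ge 2$, at most $\lfloor\log_2 n\rfloor$ of the $n_i$ can be $\ge 2$, and each such factor contributes a decaying weight $n_i^{-i}$. Letting $M\to\infty$ yields a well-defined formal Dirichlet series $g(s)$ with $g_1=1$, and the identity $g(s)=f(s)g(s+1)$ follows directly from the reindexing $\prod_{m\ge 0}f(s+m)=f(s)\cdot \prod_{m\ge 1}f(s+m)$.

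\emph{Convergence and equality of abscissas.} Let $\sigma$ denote the absolute abscissa of $f$. For $\Re(s)>\sigma$ and $m\ge 0$,
\[|f(s+m)-1|\le \sum_{n\ge 2}|a_n|n^{-\Re(s)-m}\le 2^{-m}\sum_{n\ge 2}|a_n|n^{-\Re(s)},\]
so $\sum_m|f(s+m)-1|<\infty$ and the infinite product converges absolutely on $\Re(s)>\sigma$, giving $\sigma_g\le\sigma$. For the converse, use $f=g\cdot(g(s+1))^{-1}$: the coefficients of $(g(s+1))^{-1}$ satisfy an explicit recursion yielding polynomial bounds, which combined with absolute convergence of $g$ on $\Re(s)>\sigma_g$ produces absolute convergence of $f$ on the same half-plane, hence $\sigma\le\sigma_g$.

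\emph{Uniqueness and part (b).} If $g_1,g_2$ are two convergent Dirichlet series solutions both normalized with leading coefficient $1$, then $h:=g_1/g_2$ is a convergent Dirichlet series satisfying $h(s)=h(s+1)$. Writing $h=\sum c_n n^{-s}$, identifying coefficients yields $c_n=c_n/n$ for all $n$, forcing $c_n=0$ for $n\ge 2$ and hence $h\equiv 1$. For (b), expanding $g(s)=f(s)g(s+1)$ coefficient-wise gives the recursion
\[b_n=\frac{1}{n-1}\sum_{d\mid n,\,d\ge 2}d\,a_d\,b_{n/d}\quad(n\ge 2),\qquad b_1=1,\]
expressing by induction each $b_n$ as a $\Q$-linear combination of products of the $a_d$'s with $d\mid n$; hence $b_n\in R$ whenever $R\subset\C$ is a $\Q$-subalgebra containing all $a_n$ (the presence of the $\frac{1}{n-1}$ shows one needs $R$ to contain $\Q$, which is the relevant case for the paper). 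The most delicate step in the whole argument is the reverse abscissa inequality $\sigma\le\sigma_g$, which requires a uniform polynomial bound on the coefficients of $(g(s+1))^{-1}$ extracted from its recursive description.
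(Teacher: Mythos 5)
Your proposal is essentially the paper's own proof: the explicit product $g(s)=\prod_{m\ge 0}f(s+m)$, formal convergence by counting the factors that can contribute, uniqueness via $h(s)=h(s+1)$ and comparison of coefficients, and the recursion $g(s)=f(s)g(s+1)$ for part (b). Two steps need attention.

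In the convergence step you pass from ``$\sum_m|f(s+m)-1|<\infty$, so the infinite product converges absolutely'' to ``$\sigma_g\le\sigma$''. These are not the same statement: convergence of the numerical product $\prod_m f(s+m)$ at a point $s$ does not by itself give $\sum_n|b_n|\,n^{-\Re(s)}<\infty$. The repair is to run your geometric-decay estimate on the majorant $f^+(\sigma)=\sum_n|a_n|n^{-\sigma}$: every term in the expansion of $\prod_m f^+(\sigma+m)$ is nonnegative, so by monotone convergence the (finite) value of that product equals $\sum_n B_n n^{-\sigma}$ with $B_n\ge|b_n|$. This is in substance what the paper does when it dominates $|b_n|$ by the coefficients of $\zeta(s)\zeta(s+1)\cdots$. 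More seriously, your route to the reverse inequality $\sigma\le\sigma_g$ --- ``a uniform polynomial bound on the coefficients of $(g(s+1))^{-1}$'' --- would not work as stated: inverting an absolutely convergent Dirichlet series can strictly worsen the abscissa of absolute convergence (e.g.\ $(1-2^{1-s})^{-1}=\sum_k 2^k(2^k)^{-s}$ has abscissa $1$ while $1-2^{1-s}$ converges everywhere), so no such uniform bound exists in general and any argument must exploit the special form of $g$. In fairness, the paper itself disposes of this direction in one sentence (``if $g$ converges absolutely for $\Re(s)>c$, so does $f=g(s)/g(s+1)$'') which faces exactly the same difficulty.

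Your observation on part (b) is correct and worth keeping: the recursion $b_n=\frac{1}{n-1}\sum_{d\mid n,\,d\ge2}d\,a_d\,b_{n/d}$ (equivalently the closed formula with factors $(1-r_i^{-1})^{-1}$) introduces denominators, so the statement as printed requires $\Q\subseteq R$ --- already $a_3=1$, $R=\Z$ gives $b_3=3/2$. The paper's ``obvious from the formula'' silently assumes this; it is harmless for the intended application, where the coefficient ring is a $\Q$-algebra.
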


\begin{proof} a) \emph{Uniqueness:} if $g_1,g_2$ are two solutions, then $h=g_1/g_2$ verifies
\[h(s)=h(s+1).\]

If $h(s)=\sum c_n n^{-s}$, this gives the identity
\[u(s)=\sum(c_n-c_n/n) n^{-s} = 0.\]

Since $g_1,g_2$ are convergent, so are $h$ and $u$, and it is well-known that this implies
$c_n - c_n/n=0$ for all $n$, hence $c_n=0$ for all $n>1$.

\emph{Existence:} Let us check first that $g(s)$ converges as a formal Dirichlet series.
Indeed:
\[g(s) = \prod_{m= 0}^\infty \left(\sum_{n=1}^\infty \frac{a_n}{n^m} n^{-s}\right).\]

In this product, the coefficient $b_n$ of $n^{-s}$ is
\[b_n = \sum_{r_1\dots r_k=n} \sum_{m_1\ge 0,\dots,m_k\ge 0} \frac{a_{r_1}}{r_1^{m_1}}\dots
\frac{a_{r_k}}{r_k^{m_k}}=\sum_{\substack{r_1\dots r_k=n\\r_1,\dots,r_k>1}}
\frac{a_{r_1}}{1-r_1^{-1}}\dots \frac{a_{r_k}}{1-r_k^{-1}} .\]

 Suppose that $f(s)$ converges absolutely for $\Re(s)>c$. Then $|a_n|=o(n^{c+\epsilon})$ for
all $\epsilon>0$. Therefore
\begin{multline*}
|b_n]\le \left|\sum_{\substack{r_1\dots r_k=n\\r_1,\dots,r_k>1}}
\frac{a_{r_1}}{1-r_1^{-1}}\dots \frac{a_{r_k}}{1-r_k^{-1}}\right|\\ = o(n^{c+\epsilon})
\sum_{\substack{r_1\dots r_k=n\\r_1,\dots,r_k>1}} \frac{1}{1-r_1^{-1}}\dots
\frac{1}{1-r_k^{-1}}=o(n^{c+\epsilon})g^0_n
\end{multline*}
where  $g^0_n$ is the $n$-th coefficient of
\[g^0(s) =\zeta(s)\zeta(s+1)\dots\]

 To study the absolute convergence of this product, we look at the log of the corresponding
Eulerian product, for $s\in\R$:
\begin{multline*}
 \log g^0(s)=\sum_p\sum_{m=0}^\infty -\log(1-p^{-s-m})=\sum_p\sum_{m=0}^\infty\sum_{k=1}^\infty
\frac{p^{-k(s+m)}}{k}\\ =\sum_p \sum_{k=1}^\infty \frac{p^{-ks}}{k}\frac{1}{1-p^{-k}}\le
2\sum_p \sum_{k=1}^\infty \frac{p^{-ks}}{k} \\ = 2\sum_p -\log(1-p^{-s}) = \log(\zeta(s)^2).
\end{multline*}

The $n$-th coefficient of $\zeta(s)^2$ is the number of divisors of $n$, which is
$o(n^{\epsilon})$ for all $\epsilon>0$. Hence $|b_n|=o(n^{c+\epsilon})$ for all $\epsilon >0$.
It follows that $g(s)$ converges absolutely for $\Re(s)>c$. But conversely, if $g(s)$ converges
absolutely for $\Re(s)>c$, so does $f(s)=g(s)/g(s+1)$.

b) is obvious from the formula giving $g(s)$.

\end{proof}

\section{Motives over a ring of integers}

Let $K$ be a number field, with ring of integers $O_K$. Consider the category $\D(O_K)$. For each place $v$ of $K$, we have a homomorphism $\phi_v:O_K\to \kappa(v)$, where $\kappa(v)$ is
\begin{itemize}
\item The residue field at $v$ if $v$ is finite;
\item The completion of $K$ at $v$ if $v$ is archimediean.
\end{itemize}

In each case, we have a pull-back functor
\[\phi_v^*:\D(O_K)\to \D(\kappa(v)).\]

We shall also use the fact that $\Z_{\Spec O_K}$ is a dualising object, of $\D(O_K)$ which follows from \cite[Th. 2.3.73]{ayoub} and \cite[Cor. 5.15]{dJ2}. We write $M\mapsto M^*$ for the corresponding duality functor.

\begin{thm} \label{t6.2} Let $M\in \D(O_K)$. Then there exists a nonempty open subset $U\subseteq \Spec O_K$ such that the cohomology sheaves $H_l^i(M)$ are locally constant constructible for any prime number $l$ invertible on $U$.
\end{thm}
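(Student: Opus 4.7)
The plan is to pass to the $l$-adic realisation and combine a dévissage onto smooth generators of $\D(O_K)$ with a spreading-out argument that allows smooth and proper base change to handle all $l$ at once. First, $\D(O_K)$ is generated as a thick triangulated subcategory by the motives $M_{O_K}(X)(n)$ with $X/O_K$ smooth and $n\in\Z$; equivalently, by the $g_!\Z_X$ with $g:X\to \Spec O_K$ smooth, via the Thom twist identity \eqref{eq3.4}. Tate twists and shifts do not affect local constancy of the cohomology sheaves of the realisation, an exact triangle transfers the property through the long exact sequence of cohomology sheaves (if two of the three terms have locally constant constructible cohomology sheaves on an open $U$, so does the third), and direct summands preserve it. Intersecting finitely many dense opens of $\Spec O_K$ still gives a dense open, so by induction on the length of a finite generating expression of $M$, I reduce to the case $M = g_!\Z_X$ with $g:X\to\Spec O_K$ smooth.

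Second, for such an $M$, since $K$ has characteristic $0$, Hironaka's resolution of singularities applied to the generic fibre $X_K$ yields a smooth projective compactification $X_K\hookrightarrow \bar X_K$ over $K$ with strict normal crossings complementary divisor. Spreading out over the one-dimensional base $\Spec O_K$, I find a dense open $U_0\subseteq\Spec O_K$ and a smooth projective morphism $\bar g:\bar X\to U_0$ extending $\bar X_K\to \Spec K$, with $X_{U_0}\subseteq \bar X$ the open complement of a relative strict normal crossings divisor. Iterating the open-closed localisation triangle \eqref{eq3.1} along the strata of the SNC boundary expresses $Rg_!\Q_l|_{U_0}$ as a finite iterated cone, up to Tate twists and shifts, of $l$-adic realisations of smooth projective morphisms over $U_0$. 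Smooth and proper base change then ensures that the cohomology sheaves of each such realisation are locally constant constructible on $U_0$ for every prime $l$ invertible on $U_0$, and assembling these back yields the desired open $U$.

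The main obstacle is precisely this uniformity in $l$. The generic local constancy theorem applied directly to the abstract constructible sheaf $Rg_!\Q_l$ produces an open that a priori depends on $l$, whereas we need a single open that works for every $l$ invertible on it. The geometric replacement of $g$ by a smooth and projective morphism over $U_0$, via resolution and spreading out, is what makes smooth and proper base change applicable, and the latter is uniformly valid for every prime invertible on the base — this is what furnishes the single open $U$ demanded by the statement.
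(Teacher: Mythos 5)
Your proposal is correct and follows essentially the same route as the paper: a dévissage to the case $M=M(X)$ with $X/O_K$ smooth, followed by Illusie's generic local constancy argument (Hironaka on the generic fibre, spreading out a smooth projective compactification with relative SNC boundary over a dense open $U_0$, and smooth and proper base change to get uniformity in $l$). The only cosmetic difference is that you assemble $Rg_!\Q_l$ from the smooth projective strata via localisation triangles, where the paper's cited argument uses $Rf_*$, the Leray spectral sequence and cohomological purity; both rest on the same inputs.
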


\begin{proof} We are in a case where $\D(O_K)\simeq \DM_\gm(O_K,\Q)$, so we can reason as in the proof of Lemma \ref{l2.2a}; this reduces us to the case where $M=M(X)$ with $f:X\to \Spec O_K$ smooth. Then the result is a special case of results of Illusie \cite[Th. 2.1]{illusie}. (Recall the proof: by Hironaka's resolution of singularities and some spread-out, there exists $U$ and an open immersion $j:X_U\inj \bar X$, where $\bar f:\bar X\to U$ is smooth projective and the closed complement $\bar X-X_U$ is the support of a divisor $D$ with strict normal crossings, relative to $U$. By \cite[Arcata V.3.1]{SGA412}, the sheaves $R^i\bar f_*\Q_l$ are locally constant, and so are the corresponding sheaves for all intersections of the components of $D$. The Leray spectral sequence for $j$ and cohomological purity then show that the same holds for the $R^i(f_{|U})_*\Q_l$,  \emph{cf.} \cite[Lemme 3.1]{illusie}.)
\end{proof}

\begin{cor}\label{l3} For any $M\in \D(O_K[1/l])$, the
function
\[\fp\mapsto b_\fp(M)= \sum_{i\in\Z}\dim H^i_l(M_\fp)\]
from primes of $O_K[1/l]$ to $\N$ is bounded. \qed
\end{cor}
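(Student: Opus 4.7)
The plan is to deduce this directly from Theorem \ref{t6.2} together with two elementary finiteness facts about constructible objects.

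First, since $M\in \D(O_K[1/l])$ is constructible, the $l$-adic realisation $R_l(M)$ is a bounded complex whose cohomology sheaves $H_l^i(M)$ are constructible $\Q_l$-sheaves on $\Spec O_K[1/l]$. In particular, for each closed point $\fp$, the stalks $H_l^i(M_\fp)$ are finite-dimensional $\Q_l$-vector spaces, and only finitely many of them are nonzero; so $b_\fp(M)<\infty$ for every $\fp$. The question is uniformity in $\fp$.

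Next, apply Theorem \ref{t6.2} to $M$: there exists a nonempty open $U\subseteq \Spec O_K[1/l]$ such that each $H_l^i(M)_{|U}$ is locally constant constructible. Because $\Spec O_K[1/l]$ is the spectrum of a Dedekind domain, removing finitely many closed points keeps it (geometrically) connected, so $U$ is connected. A locally constant constructible $\Q_l$-sheaf on a connected scheme has constant stalk dimension; combined with the boundedness of the complex, this yields a single integer $b$ such that $b_\fp(M)=b$ for all $\fp\in U$.

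Finally, the complement $\Spec O_K[1/l]\setminus U$ consists of finitely many closed points, and for each of them $b_\fp(M)$ is finite by the first paragraph. Hence
\[\sup_{\fp} b_\fp(M) = \max\Bigl(b,\ \max_{\fp\notin U} b_\fp(M)\Bigr)<\infty,\]
which is the desired boundedness. The only nontrivial input is Theorem \ref{t6.2}; everything else is formal from constructibility and the topology of $\Spec O_K[1/l]$, so there is no real obstacle.
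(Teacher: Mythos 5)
Your argument is correct and is essentially the one the paper intends: the corollary is stated with a \qed as an immediate consequence of Theorem \ref{t6.2}, and your write-up simply supplies the routine details (exactness of $i_\fp^*$ so that $b_\fp(M)$ is a sum of stalk ranks, constancy of those ranks for a locally constant sheaf on the connected open $U$, and finiteness at the finitely many remaining closed points). No gap.
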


\begin{thm}\label{t6.1} The Dirichlet series $\zeta(M,s)$ has a finite convergence abscissa. More precisely, if
$M^*\in d_{\le n} \D(O_K)$, then $\zeta(M,s)$ converges absolutely for $\Re(s)>n+1$.
\end{thm}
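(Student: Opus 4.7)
The plan is to expand $\zeta(M,s)$ as an Euler product over closed points via Proposition \ref{p3.1}, control the local factors using the techniques of Section \ref{s2.A}, and then invoke Lemma \ref{l1}.

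First I would write
\[\zeta(M,s)=\prod_\fp \zeta(i_\fp^*M,s)=\prod_\fp Z(i_\fp^*M,N(\fp)^{-s}),\]
the second equality being Corollary \ref{c1}, so that each local factor is a rational function in $N(\fp)^{-s}$. It then suffices to verify the two hypotheses (i), (ii) of Lemma \ref{l1} with $p$ replaced by $N(\fp)$, noting that finitely many primes of any given residual characteristic cause no trouble for convergence.

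For hypothesis (i) (uniform bound on weights): the assumption $M^*\in d_{\le n}\D(O_K)$ is stable under pullback to each closed fibre, giving $(i_\fp^*M)^*\simeq i_\fp^*M^*\in d_{\le n}\DM_\gm^\eff(\kappa(\fp),\Q)$. Indeed $i_\fp^*$ sends a generator $M_{O_K}(X)$ with $X/O_K$ smooth of relative dimension $\le n$ to $M_{\kappa(\fp)}(X_\fp)$ with $\dim X_\fp\le n$, and one checks that duality in $\D(O_K)$, which uses the dualising object $\Z_{\Spec O_K}$ (via \cite[Th.~2.3.73]{ayoub} and \cite[Cor.~5.15]{dJ2}), is compatible with that on the closed fibre up to the correct Tate twist. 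Proposition \ref{p3} then gives that the zeroes and poles of $Z(i_\fp^*M,t)$ are effective Weil $N(\fp)$-numbers of weights in $[0,2n]$, so hypothesis (i) holds with $w=2n$.

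For hypothesis (ii) (uniform bound on heights): fix a prime $l$ and apply the $l$-adic realisation $R_l$, which is available on $\D(O_K[1/l])$ and commutes with $i_\fp^*$ and duality. Using the trace formula of \cite[p.~470, (2)]{SGA5} as in the second proof of Proposition \ref{p3.2}, each local factor $Z(i_\fp^*M,t)$, for $\fp$ prime to $l$, can be expressed as an alternating product of characteristic polynomials of geometric Frobenius acting on the $l$-adic cohomology sheaves $H^i_l(M_\fp)$. Hence its height is at most $b_\fp(M)=\sum_i\dim H^i_l(M_\fp)$, which by Corollary \ref{l3} is bounded uniformly in $\fp$ dividing neither $l$ nor the bad locus. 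The finitely many excluded primes contribute finitely many Euler factors and do not affect the abscissa of convergence; if needed, an auxiliary prime $l'\neq l$ handles the primes above $l$. This yields hypothesis (ii).

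Lemma \ref{l1} then gives absolute convergence of $\zeta(M,s)$ for $\Re(s)>w/2+1=n+1$, which is the claimed bound.

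The main obstacle is hypothesis (ii): one must argue that the rational function $Z(i_\fp^*M,t)$ has height controlled by an $l$-adic Betti number, which requires knowing that the realisation respects both the categorical trace (so that $Z$ factors as an alternating product of Frobenius characteristic polynomials) and the Euler-product decomposition. Granting this compatibility, already used in the second proof of Proposition \ref{p3.2}, together with Corollary \ref{l3} finishes the argument.
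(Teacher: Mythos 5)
Your proposal is correct and follows essentially the same route as the paper: verify hypothesis (i) of Lemma \ref{l1} with $w=2n$ via Proposition \ref{p3} applied fibrewise, verify hypothesis (ii) via the uniform bound on $\ell$-adic Betti numbers from Corollary \ref{l3}, and conclude by Lemma \ref{l1}. The paper's proof is just a two-line citation of these same three ingredients; your write-up merely makes explicit the compatibilities (duality vs.\ pullback, realisation vs.\ the local factors) that the paper leaves implicit.
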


\begin{proof} Let $M^*\in d_{\le n} \D(O_K)$. By Proposition \ref{p3} and Corollary
\ref{l3}, the hypotheses of Lemma \ref{l1} are satisfied for $\zeta(M,s)$, with $w=2n$. This
proves the statement for effective motives, hence in general.
\end{proof}

\begin{cor}\label{c6.1} For any $\Z$-scheme of finite type $S$ and any $M\in \D(S)$, the formal Dirichlet series $\zeta(M,s)$ has a finite convergence abscissa.
\end{cor}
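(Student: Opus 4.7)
The plan is a short reduction to Theorem \ref{t6.1} via the pushforward equality of Theorem \ref{t3.3}. The strategy is to replace $M$ on the (possibly large-dimensional) base $S$ by a single constructible object on $\Spec \Z$, where the Dirichlet series convergence has already been established by means of the weight bound of Proposition \ref{p3} combined with Lemma \ref{l1} and the Illusie constructibility result.

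Concretely, I would let $f: S \to \Spec \Z$ denote the structural morphism, which is of finite type by hypothesis. Applying Theorem \ref{t3.3} to $f$ yields the equality of formal Dirichlet series
\[\zeta(M,s) = \zeta(f_! M, s).\]
The object $f_! M$ lies in $\D(\Spec \Z)$; since $\Z = O_\Q$ is the ring of integers of the number field $\Q$, this is the category $\D(O_\Q)$ considered in Section~6.

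Then I would invoke Theorem \ref{t6.1} applied to $f_! M \in \D(O_\Q)$: it furnishes a finite abscissa of convergence for $\zeta(f_! M, s)$. Since the identity above is an equality of formal series (hence term-by-term identical as Dirichlet coefficients), it transfers the finite convergence abscissa back to $\zeta(M,s)$, which is the desired conclusion.

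The only point one has to check — and it is not really an obstacle — is that $f_!$ is defined on $\D(S)$ and takes values in $\D(\Spec \Z)$. This is built into the six-operations formalism of Ayoub adopted in the paper (and is part of the assumption that constructibility is preserved by the six operations). There is no substantive difficulty here; the corollary is essentially a bookkeeping consequence of Theorems \ref{t3.3} and \ref{t6.1}.
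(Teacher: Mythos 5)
Your proposal is correct and is exactly the paper's argument: the paper's proof of Corollary \ref{c6.1} is the one-line reduction "this follows from Theorems \ref{t3.3} and \ref{t6.1}," i.e.\ push forward along $f:S\to\Spec\Z$ and apply the convergence result over $O_\Q=\Z$. Your write-up simply makes the same bookkeeping explicit.
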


\begin{proof} This follows from Theorems \ref{t3.3} and \ref{t6.1} a).
\end{proof}

\begin{rk} Denis-Charles Cisinski pointed out that there are more motivic and uniform methods to obtain  bounds as in Corollary \ref{l3} (\cite[Th. 6.3.26]{cis-deg2}, \cite{schol-etal}, \cite[Th. 2.4.2]{cisinski}). But all these theorems rest \emph{in fine} on the smooth and proper base change theorem for étale cohomology \cite[Arcata, V, Th. 3.1]{SGA412}, so they do not seem to bring something essentially new here as the existence of a bound is sufficient for Theorem \ref{t6.1} and its corollary.
\end{rk}

\begin{prop}\label{p6.1} For $M\in \D(O_K)$, let
%\[\zeta(M,s) = \prod_{v\text{ finite}} \zeta(\phi_v^* M,s)\] 
\[\quad \xi(M,s)= |d_K|^{s\chi(M)/2}\prod_{v\mid\infty}\Gamma(\phi_v^* M,s)\cdot \zeta(M,s) \]
where $d_K$ is the absolute discriminant of $K$. Then  $\xi(\pi_* M,s)=\xi(M,s)$, where $\pi_*$ is the push-forward functor $\D(O_K)\to \D(\Z)$. 
\end{prop}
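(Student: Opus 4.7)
The plan is to verify the equality factor by factor, matching the three multiplicative pieces of $\xi$ (zeta, archimedean Gamma, and discriminant prefactor) separately under push-forward by $\pi$.

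For the zeta factor, the argument is immediate. Since $\pi : \Spec O_K \to \Spec \Z$ is finite, hence proper, we have $\pi_! = \pi_*$, and Theorem~\ref{t3.3} gives directly
\[\zeta(M,s) = \zeta(\pi_! M, s) = \zeta(\pi_* M, s).\]

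For the archimedean Gamma factor, I would exploit the isomorphism $O_K \otimes_\Z \R \simeq \prod_{v\mid\infty} K_v$ to form the Cartesian square
\[\begin{CD}
\bigsqcup_{v\mid\infty}\Spec K_v @>\phi'>> \Spec O_K\\
@V\pi'VV @V\pi VV\\
\Spec \R @>\phi_\infty>> \Spec \Z
\end{CD}\]
and apply proper base change, obtaining $\phi_\infty^* \pi_* M \simeq \bigoplus_{v\mid\infty} (f_v)_* \phi_v^* M$, where $f_v : \Spec K_v \to \Spec \R$. For real places $v$, $f_v$ is the identity and the contribution is literally $\Gamma(\phi_v^* M, s)$. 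For complex places, $f_{v,*}$ corresponds on Hodge realizations to the Weil restriction of scalars $\mathrm{Res}_{\C/\R}$; the identity $\Gamma(\mathrm{Res}_{\C/\R} V, s) = \Gamma(V,s)$ for any complex Hodge structure $V$ reduces, via the Hodge-type decomposition, to the duplication identity $\Gamma_\R(s)\Gamma_\R(s+1) = \Gamma_\C(s)$, which holds in the paper's normalization (one checks the two cases $p \ne q$ and $p=q$ separately, using in the latter that the involution swaps $V$ and $\bar V$ and so contributes equally to $h(p,+)$ and $h(p,-)$).

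The remaining ingredient is the discriminant prefactor, and this is the point I expect to be the main obstacle. Since $|d_\Q| = 1$, the $\Z$-side of the formula contributes no discriminant contribution, so one must explain how the factor $|d_K|^{s\chi(M)/2}$ on the $O_K$-side is absorbed; the expected mechanism is a motivic analog of the conductor--discriminant formula, compatible with the rank identity $\chi(\pi_* M) = [K:\Q]\chi(M)$ that one obtains by generalising Lemma~\ref{l2.2} to the finite morphism $\pi$. Pinning down this compatibility between the push-forward on Hodge realisations and the discriminant of $K/\Q$ is where the real work lies.
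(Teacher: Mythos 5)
Your treatment of the finite part (Theorem \ref{t3.3} together with $\pi_!=\pi_*$ for the finite morphism $\pi$) and of the archimedean factors is correct, and the Gamma computation --- base change along $O_K\otimes_\Z\R\simeq\prod_{v\mid\infty}K_v$, then $\Gamma(\mathrm{Res}_{\C/\R}V,s)=\Gamma(V,s)$ via the duplication identity $\Gamma_\R(s)\Gamma_\R(s+1)=\Gamma_\C(s)$, which is exact in the paper's normalisation of $\Gamma_\C$ --- is precisely the ``elementary computation'' that the paper's one-line proof alludes to.

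The gap you flag at the discriminant prefactor is, however, genuine, and your own Gamma computation makes it sharper than you acknowledge. Since you have shown that $\Gamma(\phi_\infty^*\pi_*M,s)=\prod_{v\mid\infty}\Gamma(\phi_v^*M,s)$ \emph{on the nose}, and the finite parts agree on the nose, the claimed identity $\xi(\pi_*M,s)=\xi(M,s)$ reduces to $|d_K|^{s\chi(M)/2}=|d_\Q|^{s\chi(\pi_*M)/2}=1$, which fails whenever $K\ne\Q$ and $\chi(M)\ne 0$: already for $M=\un$ and $K$ real quadratic one side is $|d_K|^{s/2}\Gamma_\R(s)^2\zeta_K(s)$ and the other is $\Gamma_\R(s)^2\zeta_K(s)$. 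The conductor--discriminant mechanism you invoke cannot close this gap, because the formula defining $\xi$ over $\Spec\Z$ contains no non-archimedean conductor term in which $|d_K|^{\chi(M)}$ could resurface, and the Gamma factor depends only on Hodge numbers, hence is blind to the discriminant. So either the statement must be read with an amended definition of $\xi$ (inserting a conductor factor, as in completed Artin or Hasse--Weil $L$-functions, so that inductivity becomes an instance of the conductor--discriminant formula), or the equality holds only up to the factor $|d_K|^{s\chi(M)/2}$. You should say this explicitly rather than defer it as ``where the real work lies'': as the definitions stand there is nothing left to compute, only a discrepancy to resolve. (Your rank identity $\chi(\pi_*M)=[K:\Q]\chi(M)$ is correct but plays no role, since $|d_\Q|=1$ annihilates that term anyway.)
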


\begin{proof} It suffices to check this for the Gamma-discriminant part, and it follows from an elementary computation.
\end{proof}

To be complete, we define the completed $\zeta$ function of a motive over a $\Z$-scheme of finite type.

\begin{defn}\label{d6.1}
If $M\in \D(S)$ where $f:S\to \Spec \Z$ is a $\Z$-scheme of finite type, we set $\xi(M,s)=\xi(f_!M,s)$.
\end{defn}

Note that $\xi(M,s)=\zeta(M,s)$ if $f$ is not dominant, and Theorem \ref{t3.3} still holds when replacing $\zeta$ by $\xi$. By Proposition \ref{p6.1}, this definition does extend the case $S=\Spec O_K$.

\section{A theorem of Serre}

For $M\in \D(O_K)$ and $\fp\subset O_K$, define
\[N_M(\fp) = \sharp(M_\fp)\]
the number of points of $M$ modulo $\fp$.

\begin{thm}\label{t3} Let $M\in \D(O_K)$. Suppose that $\zeta(M,s)$ is
not a finite product of Euler factors. Then the set
\[\{\fp\mid N_M(\fp) = 0\}\]
has a density $1-\epsilon$, with
\[\epsilon \ge \frac{1}{b_\infty(M)^2}\]
where $b_\infty(M)=\sum_i \dim H^i_l(M_K)$.
\end{thm}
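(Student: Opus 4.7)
The plan is to transfer the question to the $l$-adic Galois representation attached to $M$ and then apply Serre's Chebotarev-style density theorem. Fix a prime $l$ and, by Theorem \ref{t6.2}, a non-empty open $U \subseteq \Spec O_K$ on which $l$ is invertible and the sheaves $H^i_l(M)$ are lisse. Let $G_K := Gal(\bar K/K)$ and let $\rho : G_K \to GL(V)$ be the continuous representation on $V := \bigoplus_i H^i_l(M_K)$, a graded $\Q_l$-vector space of total dimension $b_\infty(M)$. Write $\chi_V(g) = \sum_i (-1)^i \tr(g \mid H^i_l(M_K))$ for the associated virtual character. The compatibility of $R_l$ with traces of Frobenius (as used in the second proof of Proposition \ref{p3.2}) combined with lisseness on $U$ yields
\[
\sharp_n(M_\fp) = \chi_V(F_\fp^n) \qquad (\fp \in U,\ n \ge 1),
\]
and in particular $N_M(\fp) = \chi_V(F_\fp)$ for $\fp \in U$.

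Let $H = \rho(G_K)$, a compact $l$-adic Lie group with normalized Haar measure $\mu$, and let $Z = \{g \in H : \chi_V(g) = 0\}$, a closed, conjugation-invariant, analytic subset. By Serre's $l$-adic Chebotarev density theorem, the Frobenius classes $\{\rho(F_\fp)\}_{\fp \in U}$ are equidistributed in $H$, so the density of $\{\fp : N_M(\fp) = 0\}$ exists and equals $\mu(Z)$. If $\chi_V$ vanishes identically on $H$, then $\sharp_n(M_\fp) = 0$ for every $\fp \in U$ and every $n \ge 1$, so all local factors $\zeta(M_\fp, s)$ at $\fp \in U$ are trivial and $\zeta(M,s)$ reduces to the finite product of Euler factors at the primes outside $U$, contradicting the hypothesis. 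Hence $\chi_V$ is not identically zero on $H$.

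The main obstacle is then the quantitative estimate
\[
\mu(Z) \le 1 - \frac{1}{b_\infty(M)^2},
\]
which is an application of Serre's quantitative Chebotarev (\emph{Quelques applications du théorème de densité de Chebotarev}, Publ. IHES 54, 1981). Schematically: decomposing the semisimplification of $V$ into distinct irreducibles over $\bar \Q_l$ as $[V^{ss}] = \sum m_i [W_i]$, the hypothesis $\chi_V \not\equiv 0$ forces some $m_i \ne 0$, so $\sum m_i^2 \ge 1$; Schur orthogonality on the compact group $H$ then gives $\int_H \chi_V(g) \chi_V(g^{-1})\, d\mu(g) = \sum m_i^2 \ge 1$. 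The integrand vanishes on $Z$, and after choosing a $\rho(G_K)$-invariant $\Z_l$-lattice in $V$ and a suitable embedding $\bar \Q_l \hookrightarrow \C$, one bounds it in absolute value by $b_\infty(M)^2$, whence $\mu(H \setminus Z) \ge 1/b_\infty(M)^2$. The delicate point is precisely this uniform character bound in the $l$-adic setting, where the unitarity argument available for continuous complex representations of compact groups requires more care; this is where the proof relies essentially on the analysis in Serre's paper.
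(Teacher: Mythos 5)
Your reduction is the same as the paper's: restrict to the open set $U$ of Theorem \ref{t6.2}, identify $N_M(\fp)$ for $\fp\in U$ with the value at a Frobenius of the virtual character of $G_K$ on $H^*_l(M_K)$, dispose of the degenerate case where that character vanishes identically on the image (then all Euler factors over $U$ are trivial and $\zeta(M,s)$ is a finite Euler product), and conclude by a density theorem of Serre. The paper implements the last step by quoting \cite[Th.~5.15]{NX} (restated as Theorem \ref{tserre}) for the \emph{pair} of representations of $G_K$ on $H^{even}_l(M_K)$ and $H^{odd}_l(M_K)$, of dimensions $n$ and $n'$: the set where the two traces differ has density $\ge 1/((n+n')\sup(n,n'))\ge 1/b_\infty(M)^2$.

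The gap is in your quantitative step, and it is exactly the point you flag as ``delicate'' without resolving it. Schur orthogonality is not available for $l$-adic representations of the compact group $H$: the class function $\chi_V$ takes values in $\Q_l$, so the integral $\int_H\chi_V(g)\chi_V(g^{-1})\,d\mu(g)$ against real-valued Haar measure is not defined, and transporting the character through an abstract embedding $\bar{\Q}_l\hookrightarrow\C$ destroys both continuity and the bound $|\chi_V(g)|\le b_\infty(M)$. Indeed the eigenvalues of $g\in H$ are $l$-adic units but by no means complex numbers of modulus $1$: already for $H=\Z_l^\times\subset GL_1(\Q_l)$ the element $g=1+l$ has unbounded powers under any embedding into $\C$, so the ``uniform character bound'' you need is simply false in the form stated. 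This is precisely the difficulty that Serre's Theorem 5.15 of \cite{NX} --- stated for continuous representations over an arbitrary locally compact field of characteristic $0$ --- is designed to circumvent, by an argument that is not the complex unitarity/orthogonality one; the reference you cite instead (\emph{Quelques applications du th\'eor\`eme de densit\'e de Chebotarev}, Publ.\ Math.\ IH\'ES 54) concerns effective Chebotarev for complex Artin representations and does not supply the missing bound. (A secondary point: for a closed conjugation-invariant $Z\subset H$, equidistribution alone gives only upper and lower density estimates unless $\mu(\partial Z)=0$; the existence of the density is also part of what \cite{NX} establishes.) The fix is to invoke \cite[Th.~5.15]{NX} with $\rho=H^{even}_l(M_K)$ and $\rho'=H^{odd}_l(M_K)$ rather than to re-derive it by orthogonality.
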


\begin{proof} It is the same as Serre's \cite[Th. 6.17]{NX}, which is the special case
$M=M^{BM}(X)\oplus M^{BM}(X')[1]$ for $X,X'$ $O_K$-schemes of finite type. For $U$ as in Theorem \ref{t6.2}, one may compute
the traces of the geometric Frobenius $F_{M_\fp}$ acting on $H_l^*(M_\fp)$, for $\fp\in U$, as
the traces of the inverse of the [conjugacy class of the] arithmetic Frobenius $\phi_\fp\in
Gal(\bar K/K)$ acting on
$H_l^*(M_K)$. The statement then reduces to the following theorem of Serre \cite[Th. 5.15]{NX}:

\begin{thm}\label{tserre} Let $G$ be a compact group, $K$ be a locally compact field of
characteristic
$0$ and let $\rho:G\to GL_n(K)$, $\rho':G\to GL_{n'}(K)$ be two continous $K$-linear
representations  of
$G$. Then
\begin{thlist}
\item either $\tr \rho= \tr \rho'$;
\item or the set $\{g\in G\mid \tr(\rho)(g)\ne \tr(\rho')(g)\}$ has a Haar density $\ge
\frac{1}{(n+n')\sup(n,n')}$.
\end{thlist}
\end{thm}

To apply Serre's theorem, we represent $G=G_K$ on $H^{even}_l(M_K)$, yielding $\rho$, and on
$H^{odd}_l(M_K)$, yielding $\rho'$. Here, $K=\Q_l$. If both those vector spaces are $0$, then
$\sH_l(M)$ is supported on $\Spec O_K[1/l]-U$, where $U$ is the open set as above; then all
Euler factors of $\zeta(M,s)$ at $\fp\in U$ are equal to $1$. Otherwise, we may apply the
theorem. In case (i), we get the same conclusion on the Euler factors as above. Case (ii)
yields the conclusion of Theorem \ref{t3}.
\end{proof}

More generally, a large part of Serre's results in \cite{NX} seem to extend to objects of $\D(O_K)$ without difficulty. 

\section{Some six functors algebra}

\subsection{$K_0$ of triangulated categories}\label{s8.A} Let
\[0 \to\sT'\by{i} \sT\by{p} \sT''\to 0\]
be a short exact sequence of triangulated categories: $i$ is a thick embedding and $\sT/\sT'\iso\sT''$. We assume that we are in the Verdier situation: $i$ has a right adjoint $\pi$, hence $p$ has a right adjoint $j$ and any object $M\in \sT$ fits in a functorial exact triangle
\[i\pi M\to M\to pj M\by{+1}.\]

From this, it follows:

\begin{lemma} In the above situation, the map
\[K_0(\sT)\by{\left(\begin{smallmatrix}\pi\\ p \end{smallmatrix}\right)} K_0(\sT')\oplus K_0(\sT'')\]
is an isomorphism. Alternately, we have an exact sequence
\[0\to K_0(\sT')\by{i} K_0(\sT)\by{p} K_0(\sT'')\to 0\]
split by $\pi$ and $j$.
\end{lemma}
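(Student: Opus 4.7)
\medskip

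\noindent\textbf{Proof plan.} The two formulations are equivalent: a split short exact sequence
\[0\to K_0(\sT')\by{i} K_0(\sT)\by{p} K_0(\sT'')\to 0\]
with splittings $\pi$ and $j$ is the same datum as the isomorphism $K_0(\sT)\iso K_0(\sT')\oplus K_0(\sT'')$ via the biproduct matrix $\left(\begin{smallmatrix}\pi\\p\end{smallmatrix}\right)$. So I would prove the isomorphism statement by checking that the obvious candidate inverse $i+j:K_0(\sT')\oplus K_0(\sT'')\to K_0(\sT)$ is a two-sided inverse of $(\pi,p)$, then read off the exact sequence at the end.

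First I would record the four orthogonality/unit identities. The inclusion $i$ is fully faithful, so the unit $\mathrm{id}_{\sT'}\Rightarrow \pi i$ is invertible; by the standard fact that a right adjoint to a Verdier localisation is fully faithful, the counit $pj\Rightarrow \mathrm{id}_{\sT''}$ is invertible as well. By construction $\sT'=\ker p$, so $pi=0$. For the remaining vanishing, a Yoneda argument using the two adjunctions gives, for $N\in \sT'$ and $M\in \sT''$,
\[\Hom_{\sT'}(N,\pi j M)=\Hom_\sT(iN,jM)=\Hom_{\sT''}(piN,M)=0,\]
hence $\pi j=0$. These four identities pass to $K_0$ without change.

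Next I would exploit the functorial triangle $i\pi M\to M\to jpM\by{+1}$ (the $pjM$ in the statement being a typo for $jpM$, the unique object in $\sT$ mapped to by $M$ via the counit). On $K_0(\sT)$ this yields the key additivity relation
\[[M]=[i\pi M]+[jpM]\quad\text{in }K_0(\sT).\]
Combined with the four identities above, the composition $(\pi,p)\circ(i+j)$ is the identity of $K_0(\sT')\oplus K_0(\sT'')$ (it is the matrix $\left(\begin{smallmatrix}\pi i&\pi j\\pi&pj\end{smallmatrix}\right)=\left(\begin{smallmatrix}\mathrm{id}&0\\0&\mathrm{id}\end{smallmatrix}\right)$), and the composition $(i+j)\circ(\pi,p)$ sends $[M]$ to $[i\pi M]+[jpM]=[M]$ by the triangle. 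This proves the isomorphism.

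For the exact-sequence reformulation: $\pi i=\mathrm{id}$ and $pj=\mathrm{id}$ give respectively a left inverse to $i_*$ and a right inverse to $p_*$, so $i_*$ is injective and $p_*$ is surjective. The relation $pi=0$ gives $\mathrm{Im}(i_*)\subseteq\ker(p_*)$; conversely if $[pM]=0$ then $[jpM]=j_*[pM]=0$, so the triangle identity collapses to $[M]=[i\pi M]\in \mathrm{Im}(i_*)$. I do not expect a genuine obstacle here: the only thing to be careful about is the standard but easily forgotten point that in the Verdier setup $j$ is fully faithful, which is what makes $pj=\mathrm{id}$ hold and gives the splitting on the $\sT''$ side.
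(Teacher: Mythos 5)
Your proof is correct and follows the same route as the paper's (much terser) argument: full faithfulness of $i$ and $j$ makes $(i,j)$ a right inverse of $\left(\begin{smallmatrix}\pi\\ p\end{smallmatrix}\right)$, and the functorial exact triangle makes it a left inverse on $K_0$. You are also right that $pjM$ in the displayed triangle should read $jpM$, and your explicit check of the four orthogonality identities --- in particular $\pi j=0$ via the two adjunctions --- just fills in details the paper leaves implicit.
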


\begin{proof} Since $i$ and $j$ are fully faithful, the first map has the right inverse $(i,j)$. It is also a left inverse thanks to the exact triangle above.
\end{proof}

Let $(\sT_\alpha)_{\alpha\in A}$ be an filtered inductive system of triangulated categories, and let $\sT=\colim_\alpha \sT_\alpha$. We have an induced homomorphism
\begin{equation}\label{eq8.7}
\colim_\alpha K_0(\sT_\alpha)\to K_0(\sT).
\end{equation}

\begin{lemma}\label{l8.1} \eqref{eq8.7} is an isomorphism.
\end{lemma}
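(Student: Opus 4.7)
The plan is the standard ``finite-presentation'' argument for $K_0$ under filtered colimits, the only subtle point being to make sense of what a filtered colimit of triangulated categories is. I will assume the usual description: objects of $\sT$ are represented by objects of some $\sT_\alpha$, morphisms by morphisms of some $\sT_\beta$ (with $\alpha\le \beta$), two morphisms from $\sT_\beta$ become equal in $\sT$ iff they become equal already in some $\sT_\gamma$ with $\beta\le \gamma$, and every exact triangle in $\sT$ is isomorphic to the image of an exact triangle from some $\sT_\delta$. The transition functors $\sT_\alpha\to \sT_\beta$ are triangulated.

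\textbf{Surjectivity.} Let $M\in \sT$. By the first bullet above, there exists $\alpha\in A$ and $M_\alpha\in \sT_\alpha$ whose image in $\sT$ is isomorphic to $M$. Then $[M]$ is the image of $[M_\alpha]\in K_0(\sT_\alpha)$ under \eqref{eq8.7}, so \eqref{eq8.7} is surjective.

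\textbf{Injectivity.} Let $x\in \colim_\alpha K_0(\sT_\alpha)$ go to $0$ in $K_0(\sT)$. Represent $x$ by some $x_\alpha\in K_0(\sT_\alpha)$; then the image of $x_\alpha$ in $K_0(\sT)$ vanishes. By definition of $K_0$ as the free abelian group on (isomorphism classes of) objects modulo the relations $[B]=[A]+[C]$ for each exact triangle $A\to B\to C\by{+1}$, this means that $x_\alpha$ (viewed in $\sT$) is a \emph{finite} $\Z$-linear combination of such relations, involving finitely many exact triangles $T_1,\dots,T_n$ in $\sT$ and finitely many isomorphisms in $\sT$ identifying the vertices of these triangles with objects coming from $\sT_\alpha$. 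Each $T_i$ is the image of an exact triangle in some $\sT_{\beta_i}$, and each isomorphism used comes from some $\sT_{\gamma_j}$; since $A$ is filtered and the set of indices involved is finite, there is $\delta\in A$ with $\alpha\le \delta$ dominating all $\beta_i$ and $\gamma_j$. After transporting everything to $\sT_\delta$ via the (triangulated) transition functors, the same finite collection of relations already holds in $K_0(\sT_\delta)$, and therefore the image of $x_\alpha$ in $K_0(\sT_\delta)$ is zero. Hence $x=0$ in $\colim_\alpha K_0(\sT_\alpha)$.

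\textbf{Main obstacle.} The only real subtlety is the set-theoretic/foundational one: guaranteeing that an exact triangle in $\sT$ lifts to an exact triangle in some $\sT_\delta$ (rather than merely lifting each vertex and each morphism separately, with the octahedral/triangle axiom verified only after passage to $\sT$). This is part of what it means to form the $2$-colimit in the $2$-category of triangulated categories and triangulated functors; in the filtered case it is automatic because any finite diagram needed to witness ``is an exact triangle'' already lives in some $\sT_\delta$.
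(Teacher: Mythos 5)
Your argument is correct and is essentially the same as the paper's: surjectivity because every object of $\sT$ comes from some $\sT_\alpha$, and injectivity because a vanishing class in $K_0(\sT)$ is witnessed by finitely many exact triangles and isomorphisms, all of which descend to some $\sT_\delta$ dominating the given index by filteredness. Your extra remark on lifting exact triangles (not just their vertices) makes explicit a point the paper passes over in one sentence.
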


\begin{proof} Write $i_\alpha:\sT_\alpha\to \sT$ for the canonical functor. Let $X\in \sT$. Then $X\simeq i_\alpha X_\alpha$ for some $(\alpha,X_\alpha)$. This shows that \eqref{eq8.7} is surjective.

Let $\alpha\in A$ and  $x\in K_0(\sT_\alpha)$ be such that $i_\alpha x=0$. Writing $x=\sum_{i\in I} n_i[X_i]$ for $I$ finite, $n_i\in\Z$ and  $X_i\in \sT_\alpha$, the hypothesis means that we have an equality
\[ \sum_{i\in I} n_i[i_\alpha X_i] = \sum_{j\in J} m_j([Y_j]-[Y'_j]-[Y''_j])\]
in the free group with generators the isomorphism classes of objects of $\sT$, where $J$ is finite, $m_j\in\Z$ and $Y'_j\to Y_j\to Y''_j\by{+1}$ are exact triangles. All these exact triangles come from exact triangles in $\sT_\beta$ for some $\beta$ dominating $\alpha$. This shows that \eqref{eq8.7} is injective.
\end{proof}

For simplicity, we set
\[K_0^\sM(S) = K_0(\D(S))\]
for any scheme $S$.

Let $K$ be a global field. We write $C$ for $\Spec O_K$ if $K$ is  number field or for the smooth projective model of $K$ in positive characteristic.

Let $j:U\subseteq V$ be two nonempty open subsets of $C$, and let $i:Z\to V$ be to complementary closed immersion. We have an exact sequence of triangulated categories
\begin{equation}\label{eq9.4}
0\to \D(Z)\by{i_*} \D(V)\by{j^*}\D(U)\to 0
\end{equation}
which is split as in the previous section by $i^!$ and $j_*$. Note also that
\[\D(Z)=\coprod_{v\in Z} \D(\kappa(v)).\]

Hence we get a short exact sequence
\[0\to \bigoplus_{v\in Z} K_0^\sM(\kappa(v)) \by{((i_v)_*)} K_0^\sM(V)\by{j^*} K_0^\sM(U)\to 0\]
which is split, although we shall not use this.

By Ivorra \cite[Prop. 4.16]{ivorra}, we have an equivalence
\[2-\colim_U \D(U) \iso \D(K)\]
hence, by Lemma \ref{l8.1}, a (non split) short exact sequence
\begin{equation}\label{eq8.0}
0\to \bigoplus_v K_0^\sM(\kappa(v)) \by{((i_v)_*)} K_0^\sM(C)\by{j^*} K_0^\sM(K)\to 0
\end{equation}
where $v$ runs through all the closed points of $C$.
\subsection{A purity theorem} %Let $(\H,\otimes,\I)$ be a unital monoidal stable homotopic $2$-functor on the category of (Noetherian) schemes, in the sense of \cite[Def. 2.3.1]{ayoub}. We shall need:

%\begin{defn} For any scheme $S$, we write $\H^\proj(S)$ for the localising subcategory of $\H(S)$ generated by the $f_\sharp (\Th(\Omega_f)^n\I_X)$, for $f:X\to S$ smooth projective and $n\in \Z$., where $\Omega_f$ is the Module of relative differentials and $\Th$ denotes the Thom equivalence from \cite[\S\S 1.5.1 and 1.5.3]{ayoub}.
%\end{defn}
For the sequel, we note that the theory $\D$ verifies the axioms of \cite[Def. 2.3.1]{ayoub} by \emph{loc. cit.}, Prop. 4.5.31.

Let $S$ be a scheme, $i:Z\to S$ a closed immersion and $j:U\to S$ the complementary open immersion. For $M,N\in \D(S)$, we have a natural transformation \cite[\S 2.3.2]{ayoub}
\begin{equation}\label{eq8.1}
r_{M,N}:i^*M\otimes i^!N\to i^!(M\otimes N)
\end{equation}
which is not an isomorphism in general (\emph{loc. cit.}, Remark 2.3.13). However,

\begin{thm}\label{t8.1} If $M$ is strongly dualisable, \eqref{eq8.1} is an isomorphism.
\end{thm}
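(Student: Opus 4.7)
My plan is to apply $i_*$ to the map $r_{M,N}$ and show that the result is an isomorphism; since $i^* i_* \simeq \operatorname{id}$, the functor $i_*$ is fully faithful, so this suffices.

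First I would invoke the projection formula for the closed immersion $i$. Because $i$ is proper, $i_* = i_!$, and Ayoub's six-functor package gives a natural isomorphism
\[
i_*(i^*M \otimes i^! N) \iso M \otimes i_* i^! N.
\]
Thus the target of $i_*(r_{M,N})$ is canonically identified with $M \otimes i_* i^! N$, and it remains to produce an isomorphism $M \otimes i_* i^! N \iso i_* i^!(M \otimes N)$ compatible with the maps to $M \otimes N$.

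For this I would compare two localisation triangles sitting over $M \otimes N$. The one coming from $M \otimes N$ itself reads
\[
i_* i^!(M \otimes N) \to M \otimes N \to j_* j^*(M \otimes N) \by{+1},
\]
and tensoring the localisation triangle for $N$ with $M$ gives
\[
M \otimes i_* i^! N \to M \otimes N \to M \otimes j_* j^* N \by{+1}.
\]
Since $j^*$ is monoidal, $j^*(M \otimes N) \simeq j^*M \otimes j^*N$, so matching the third terms reduces to constructing a natural isomorphism
\[
M \otimes j_* A \iso j_*(j^*M \otimes A) \qquad (A = j^* N).
\]
This is a projection formula for the (non-proper) open immersion $j$ with a strongly dualisable argument, and it is the main obstacle. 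The key idea is to rewrite $M \otimes (-) \simeq \uHom(M^*, -)$ via the dual $M^*$, and then appeal to the internal-Hom/direct-image adjunction isomorphism $\uHom(P, j_* A) \simeq j_* \uHom(j^* P, A)$ (which holds for any $P$ because $\uHom$ is the right adjoint of tensor and $j_*$ is itself a right adjoint), taking $P = M^*$. The functoriality in $P$ ensures the required compatibility with the arrows down to $M \otimes N$.

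With that identification in hand, the five-lemma for triangles yields an isomorphism $M \otimes i_* i^! N \iso i_* i^!(M \otimes N)$, compatible with the arrows to $M \otimes N$. Composed with the projection isomorphism of the first step, this gives
\[
i_*(i^*M \otimes i^! N) \iso i_* i^!(M \otimes N),
\]
and a diagram chase tracing the construction of $r_{M,N}$ in \cite[\S 2.3.2]{ayoub} identifies this composite with $i_*(r_{M,N})$. Since $i_*$ is fully faithful, the result follows.
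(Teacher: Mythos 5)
Your proof rests on the same two pillars as the paper's (which is due to Ayoub): the localisation triangle $i_*i^!N\to N\to j_*j^*N\by{+1}$ and the projection formula $M\otimes j_*A\simeq j_*(j^*M\otimes A)$ for strongly dualisable $M$, which you correctly derive from $M\otimes(-)\simeq\uHom(M^*,-)$ and the adjunction isomorphism $\uHom(P,j_*A)\simeq j_*\uHom(j^*P,A)$. The paper's route is more economical: since $N\mapsto i^*M\otimes i^!N$ and $N\mapsto i^!(M\otimes N)$ are both triangulated in $N$ and $r_{M,-}$ is a natural transformation, the localisation triangle reduces the claim to the cases $N=j_*N'$ (where both sides vanish, using the projection formula and $i^!j_*=0$) and $N=i_*N''$ (where $r$ is computed explicitly as a composite of isomorphisms). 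No pushforward by $i_*$ and no external comparison of triangles is needed, and every square commutes by naturality.

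The one step of yours that is genuinely under-justified is the last: ``a diagram chase identifies this composite with $i_*(r_{M,N})$.'' Your construction produces \emph{some} isomorphism $M\otimes i_*i^!N\to i_*i^!(M\otimes N)$ compatible with the counits down to $M\otimes N$, and from the adjoint description of $r_{M,N}$ one checks that the transported $i_*(r_{M,N})$ is also compatible with these counits; but two maps filling the same square of a morphism of triangles need not coincide --- their difference can be any map factoring through the connecting morphism $j_*j^*(M\otimes N)[-1]\to i_*i^!(M\otimes N)$. Here the gap does close, because the source $M\otimes i_*i^!N\simeq i_*(i^*M\otimes i^!N)$ lies in the image of $i_*$ while the third term lies in the image of $j_*$, and $\Hom(i_*X,j_*Y)\simeq\Hom(j^*i_*X,Y)=0$; hence the compatible filler is unique and is therefore your isomorphism. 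You should make this orthogonality explicit, or better, devisse $N$ itself as in the paper so that naturality of $r_{M,-}$ handles all the commutativities at once.
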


\begin{proof}[Proof (J. Ayoub)]  Let $j:U\to S$ be the complementary open immersion. By the localisation exact triangle
\begin{equation}\label{eq8.6}
i_*i^!N\to N\to j_*j^*N\by{+1} 
\end{equation}
of \cite[\S 1.4.4]{ayoub}, we reduce to the cases where $N$ is of the form $j_* N'$ or $i_*N''$.

In the first case, the left hand side is $0$, and so is the right hand side by the projection formula
\[M\otimes j_* N' \simeq j_*(j^*M \otimes N')\]
which holds because $M$ is dualisable (another lemma of Ayoub, \emph{cf.} \cite[Lemma 9.3.1]{ffihes}).

In the second case, \eqref{eq8.1} is the composition 
\[i^*M \otimes i^!i_*N'' \simeq i^!i_*(i^*M \otimes i^!i_*N'') \simeq
                                       i^!(M \otimes i_*i^!i_*N'') \to i^!(M\otimes i_*N'')\]
by the strong monoidality of $i_*$ \cite[Lemma 2.3.6]{ayoub}, and the last map is invertible: the counit $i_*i^! \to Id$ is invertible when applied to an object of the form $i_*N''$.
\end{proof}

Let $M\in \D(S)$. Applying $i^*$ to \eqref{eq8.6}, we get another exact triangle
\[i^!M\by{\tau_M} i^*M\to i^*j_*j^*M\by{+1}. \]

\begin{cor}\label{c8.1} Suppose that $\tau_{\Z_S}=0$. Then $\tau_M=0$ for any strongly dualisable $M$.
\end{cor}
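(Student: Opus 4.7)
The plan is to reduce to the hypothesis via Theorem~\ref{t8.1}. Specialising that result to $N=\Z_S$ produces an isomorphism $r_{M,\Z_S}\colon i^*M\otimes i^!\Z_S\iso i^!M$ for any strongly dualisable $M$. I will then argue that, under this isomorphism, the natural transformation $\tau_M$ corresponds to $1_{i^*M}\otimes\tau_{\Z_S}$ (where $i^*\Z_S$ is identified canonically with the unit of $\D(Z)$). Since the vanishing of $\tau_{\Z_S}$ forces $1_{i^*M}\otimes\tau_{\Z_S}=0$, the corollary follows at once.

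To compare $\tau_M$ with $1_{i^*M}\otimes\tau_{\Z_S}$, I would start from the localisation triangle \eqref{eq8.6} applied to $N=\Z_S$, namely $i_*i^!\Z_S\to \Z_S\to j_*j^*\Z_S\by{+1}$, and tensor it with $M$. Because $M$ is strongly dualisable, the projection formulas $M\otimes i_*(-)\iso i_*(i^*M\otimes -)$ and $M\otimes j_*(-)\iso j_*(j^*M\otimes -)$ (the latter being Ayoub's lemma already invoked in the proof of Theorem~\ref{t8.1}, the former its closed-immersion counterpart) identify $M$ tensored with the localisation triangle for $\Z_S$ with the localisation triangle for $M$ itself; on the first vertex the identification is precisely $i_*(r_{M,\Z_S})$. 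Applying $i^*$ and using $i^*i_*\iso\mathrm{id}$ together with the monoidality of $i^*$ then yields a morphism of exact triangles whose left-hand square encodes the sought-after equality $\tau_M\circ r_{M,\Z_S}=1_{i^*M}\otimes\tau_{\Z_S}$.

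The main obstacle will be verifying that the three vertex-by-vertex isomorphisms assemble into a genuine morphism of distinguished triangles, i.e.\ that each square commutes. The critical left square reduces to the defining property of $r_{M,\Z_S}$: by $(i_*,i^!)$-adjunction, $r_{M,\Z_S}$ is characterised as the morphism whose transpose is the composite $i_*(i^*M\otimes i^!\Z_S)\iso M\otimes i_*i^!\Z_S\by{1\otimes c_{\Z_S}} M\otimes \Z_S=M$, where $c_{\Z_S}$ is the counit appearing in \eqref{eq8.6}; and $\tau_{\Z_S}$ is obtained from $c_{\Z_S}$ by applying $i^*$ and using $i^*i_*\iso\mathrm{id}$. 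This compatibility is formal but diagrammatically delicate, being part of the general statement that the projection formula isomorphism is natural with respect to the localisation triangle in Ayoub's cross-functor framework; once recorded, the corollary follows with no further work.
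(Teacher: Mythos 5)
Your proposal is correct and takes essentially the same route as the paper: the printed proof consists precisely of the commutative square $\tau_{M\otimes N}\circ r_{M,N}=(1\otimes\tau_N)$ (up to the monoidality isomorphism), specialised to $N=\Z_S$, combined with the invertibility of $r_{M,\Z_S}$ from Theorem \ref{t8.1}. The only difference is that the paper asserts this square's commutativity without argument, whereas you supply the (correct, standard) verification via the adjunction characterisation of $r_{M,N}$ and the projection formulas applied to the localisation triangle.
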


\begin{proof}[Proof (J. Ayoub)]  We have a commutative diagram
\[\begin{CD}
i^*M \otimes i^!N @>r_{M,N}>> i^!(M\otimes N) \\
@V{1\otimes \tau_N}VV @V{\tau_{M\otimes N}}VV\\
i^*M \otimes i^*N @>\sim >> i^*(M\otimes N)
\end{CD}\]
where the top map is an isomorphism by Theorem \ref{t8.1}. The conclusion follows by taking $N=\Z_S$.
\end{proof}

\begin{rk}\label{r8.1} The hypothesis of Corollary \ref{c8.1} is verified in particular when $Z=\Spec \kappa$ is a closed point of $S$: then $\tau_{\Z_S}\in \D(\kappa)(\Z_Z(-d)[-2d],\Z_Z)=H^{2d}(\kappa,\Q(d))=0$.
\end{rk}

\begin{cor} \label{c8.2} Suppose that $S$ and $Z$ are regular and that $i$ is of pure codimension $c$. Then the morphism \eqref{eq8.1} taken with $N=\Z_S$ induces an isomorphism 
\[i^*M(-c)[-2c]\iso i^!M\]
for any dualisable $M$.
\end{cor}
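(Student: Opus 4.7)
The plan is to combine Theorem \ref{t8.1} with the absolute purity theorem for closed immersions of regular schemes. First, apply \eqref{eq8.1} in the special case $N = \Z_S$: by Theorem \ref{t8.1}, which requires precisely that $M$ be strongly dualisable, the natural transformation
\[r_{M,\Z_S} : i^*M \otimes i^!\Z_S \longrightarrow i^!(M \otimes \Z_S) \simeq i^!M\]
is an isomorphism. Thus the corollary reduces to computing $i^!\Z_S$.

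Next, identify $i^!\Z_S$ with the Thom-twisted unit $\Z_Z(-c)[-2c]$. This is the absolute purity theorem for regular closed immersions of pure codimension $c$ between regular schemes, which is a basic property of the six-functor formalism one is working in (see \cite[Thm.~2.4.50]{cis-deg}, or the corresponding statement in Ayoub's framework). Since the category $\D$ satisfies the axioms listed in \cite[Def.~2.3.1]{ayoub} (as already invoked above the statement of Theorem \ref{t8.1}), absolute purity is available here.

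Substituting the identification $i^!\Z_S \simeq \Z_Z(-c)[-2c]$ into the isomorphism produced in the first step gives
\[i^*M(-c)[-2c] \simeq i^*M \otimes \Z_Z(-c)[-2c] \simeq i^*M \otimes i^!\Z_S \overset{r_{M,\Z_S}}{\iso} i^!M,\]
which is exactly the asserted isomorphism. The only genuine input beyond Theorem \ref{t8.1} is absolute purity; without that, $i^!\Z_S$ would remain opaque. Since $\D$ is one of the theories in which absolute purity is established, the main obstacle is merely bookkeeping: one should check that the composite isomorphism agrees with the map induced by \eqref{eq8.1}, which is automatic by functoriality of $r$ in the second argument.
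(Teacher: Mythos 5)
Your proof is correct and is essentially the paper's own argument: the paper likewise combines Theorem \ref{t8.1} (applied with $N=\Z_S$) with absolute purity for regular closed immersions, citing \cite[Th. 14.4.1]{cis-deg} (or \cite[Cor. 7.5]{ayoubetale}) for the identification $i^!\Z_S\simeq \Z_Z(-c)[-2c]$. No further comment is needed.
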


\begin{proof} This follows from Theorem \ref{t8.1} and \cite[Th. 14.4.1]{cis-deg} in the language of Beilinson motives, or from \cite[Cor. 7.5]{ayoubetale} in the language of $\DA^\et$.
\end{proof}

\begin{prop}\label{p8.1} Let $\D^\proj(S)$ be the thick subcategory of $\D(S)$ generated by the $M(X)(n)$ with $X$ smooth projective over $S$ and $n\in \Z$. Suppose that $\Z_S$ is a dualising object of $\D(S)$. Then all objects of $\D^\proj(S)$ are strongly dualisable. In particular, Theorem \ref{t8.1}, Corollary \ref{c8.1} and Corollary \ref{c8.2} apply to them.
\end{prop}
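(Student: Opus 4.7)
The plan is to reduce the statement to its generators and then invoke relative Poincar\'e duality as packaged in the six functors formalism. In any tensor triangulated category, the class of strongly dualisable objects is a thick $\otimes$-subcategory (closed under shifts, direct summands, cones, and tensor products, by the additivity of traces \cite[\S 2]{may}). Since $\Z_S(1)$ is $\otimes$-invertible, Tate twisting preserves strong dualisability. It therefore suffices to show that $M_S(X)=p_\#\Z_X$ is strongly dualisable for every smooth projective morphism $p\colon X\to S$; say $p$ is of relative dimension $d$.

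The key step will be to identify the dual using adjunctions. By the $(p_\#,p^*)$-adjunction and the definition of internal Hom,
\[\uHom_S(p_\#\Z_X,N) = p_*\uHom_X(\Z_X,p^*N) = p_*p^*N \qquad \text{for all } N\in\D(S).\]
Taking $N=\Z_S$ gives the candidate dual $M^\vee:=p_*\Z_X$. On the other hand, since $p$ is proper one has $p_!=p_*$, and so the axiomatic projection formula for $p_!$ yields
\[M^\vee\otimes N = p_*\Z_X\otimes N = p_!(\Z_X\otimes p^*N) = p_!p^*N = p_*p^*N.\]
Thus $M^\vee\otimes N$ and $\uHom_S(M_S(X),N)$ are canonically isomorphic for every $N\in\D(S)$, which is precisely the criterion for $M_S(X)$ to be strongly dualisable with dual $M^\vee$.

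The main obstacle, as I see it, is not conceptual but bookkeeping: one must verify that the composite isomorphism $M^\vee\otimes N\iso p_*p^*N\iso \uHom_S(M_S(X),N)$ constructed above coincides with the canonical evaluation morphism present in any closed symmetric monoidal category. Equivalently, one must show that the unit $\Z_S\to p_*p^*\Z_S=p_*\Z_X=M^\vee$ coming from the $(p^*,p_*)$-adjunction, together with the counit extracted from the projection formula and the Wirthm\"uller isomorphism $p_\#\simeq p_!(-)(d)[2d]$, satisfy the two triangle identities. This is a routine diagram chase in Ayoub's six functors formalism, but it requires careful unwinding of the projection transformation. The hypothesis that $\Z_S$ be dualising is actually not directly invoked in this argument; its role is rather to guarantee that $D_S$ is an involution on $\D^\proj(S)$, which is what the subsequent results (Theorem~\ref{t8.1} and its corollaries) exploit.
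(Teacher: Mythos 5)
Your reduction to the generators is sound and is also how the paper implicitly proceeds: strongly dualisable objects form a thick subcategory stable under the invertible twist $\Z(1)$, so everything comes down to $M_S(X)=p_\#\Z_X$ for $p\colon X\to S$ smooth projective of relative dimension $d$. (You are also right that the dualising hypothesis on $\Z_S$ plays no role in this step; the paper's reference holds over an arbitrary base.) The problem is the core step. Producing \emph{some} natural isomorphism $p_*\Z_X\otimes N\simeq p_*p^*N\simeq \uHom_S(p_\#\Z_X,N)$ is not ``precisely the criterion'' for strong dualisability. The criterion is that the \emph{canonical} map $\uHom(M,\un)\otimes N\to\uHom(M,N)$ (adjoint to evaluation) be invertible, equivalently that one exhibit a coevaluation $\un\to M\otimes M^\vee$ and an evaluation $M^\vee\otimes M\to\un$ satisfying the triangle identities; an abstract isomorphism of functors does not formally produce such data. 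The verification you defer to your last paragraph is therefore not bookkeeping: it is the entire content of the statement, and it is exactly what the paper outsources to \cite[Th. 2.2]{riou}, whose proof it cites verbatim.

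The standard way to close the gap (Riou's argument, reproduced in Ayoub and Cisinski--D\'eglise) is to construct the duality data directly: take $M^\vee=M_S(X)(-d)[-2d]\simeq p_*\Z_X$, define the coevaluation $\Z_S\to M_S(X)\otimes M^\vee$ using the unit of $(p^*,p_*)$ together with the purity (Gysin) isomorphism for the diagonal $\delta\colon X\to X\times_SX$, which is a closed immersion of codimension $d$ between smooth $S$-schemes, define the evaluation dually using properness of $p$, and check the two triangle identities by base change along the Cartesian square defining $X\times_SX$. If you prefer to stay with your internal-Hom computation, you must instead prove that your composite isomorphism agrees with the canonical evaluation map; unwinding that compatibility requires the same purity and base-change inputs, so nothing is saved.
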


\begin{proof} The argument is the same as for \cite[Th. 2.2]{riou}.
\end{proof}

\begin{rk} As Ayoub pointed out, if one wants to prove Corollary \ref{c8.2} for $M=f_\#\Z_X$ with $f$ smooth projective, one can avoid using Theorem \ref{t8.1} by the following direct computation: $M\simeq f_!f^!\Z_S$ and $i^!f_!f^!\simeq (f_Z)_!f_Z^!i^!$ by the base change isomorphisms, where $f_Z$ is the pull-back of $f$ along $i$.
\end{rk}

\section{Zeta and $L$-functions of motives over a global field}\label{s9}

Let $K$ be a global field. We keep the notation of \S \ref{s8.A}: $C$ denotes either $\Spec O_K$ when $K$ is a number field and $O_K$ is its ring of integers, or the smooth projective model of $K$ over its field of constants $\F_q$ when $K$ is of positive characteristic.

\subsection{Zeta functions up to finite Euler products} 

Let as above $\Dir$ denote the group of convergent Dirichlet series, and let $\Eul$ be the subgroup generated by Euler factors, i.e. Dirichlet series of the form $R(p^{-s})$, where $R\in\Q(t)$ and $p$ is a prime number. From this exact sequence we deduce a map
\[\bar\zeta:K_0^\sM(K)\to \Dir/\Eul\]
induced by $DM_\gm(O_K,\Q)\ni M\mapsto \zeta(M,s)$.

Let $X$ be a smooth projective $K$-variety, and let $\Sigma_K(X)$ be the set of finite places of $K$ where $X$ has good reduction. Recall that, in \cite[Prop. 5.6 and Th. 5.7]{zetaL}, we defined an ``approximate zeta function'' by the formula
\[\zeta_{appr}(X,s)=\prod_{v\in \Sigma_K(X)} \zeta(X(v),s)\]
where $X(v)$ is the special fibre of a smooth model of $X$ over $\Spec O_v$. (The point is that $\zeta(X(v),s)$ does not depend on the choice of $X(v)$.) Then the following is obvious by construction:

\begin{prop} We have $\zeta_{appr}(X,s)=\bar \zeta(X,s)$
in $\Dir/\Eul$.\qed
\end{prop}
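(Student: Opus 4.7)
The plan is to produce an explicit lift of the class $[M^{BM}(X)]\in K_0^\sM(K)$ to $K_0^\sM(C)$ whose $\zeta$-function differs from $\zeta_{appr}(X,s)$ by only finitely many local factors, and then invoke the definition of $\bar\zeta$. The key ingredients are spreading out, the product formula of Proposition \ref{p3.1}, and proper base change for $i_w^*$ vs.\ $g_!$.

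Concretely, first I would spread $X$ out: since $X$ has good reduction at every $v\in\Sigma_K(X)$ and $\Sigma_K(X)$ is cofinite in $|C|$, there exists an open $U\subseteq C$ with $U\subseteq\Sigma_K(X)$ and a smooth projective model $g:\mathcal{X}\to U$ whose generic fibre is $X$. Let $j:U\hookrightarrow C$ denote the open immersion and set $h=j\circ g$. I propose the lift
\[
N := h_!\Z_{\mathcal{X}}\ \in\ \D(C).
\]
By smooth base change along the generic point $J_U:\Spec K\to U$ (the Cartesian square having top edge $X\to\mathcal{X}$), one has $J_U^*g_!\Z_{\mathcal{X}}\simeq (g_\eta)_!\Z_X=M^{BM}_K(X)$; composing with $j^*$, the restriction of $[N]$ to $K_0^\sM(K)$ is $[M^{BM}(X)]$. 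Thus $\bar\zeta(X,s)$ is represented by $\zeta(N,s)$ modulo $\Eul$.

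Next I would compute $\zeta(N,s)$ place by place using Proposition \ref{p3.1}. For $w\in U$, proper base change applied to the Cartesian square obtained by pulling back $g$ along $i_w:\Spec\kappa(w)\to C$ (which factors through $j$) gives
\[
i_w^*N \;=\; i_w^*j_!g_!\Z_{\mathcal{X}}\;\simeq\; (g_w)_!\Z_{X(w)}\;\simeq\;M^{BM}_{\kappa(w)}(X(w)),
\]
so that Corollary \ref{c1} d) yields $\zeta(i_w^*N,s)=\zeta(X(w),s)$. For $w\notin U$, the analogous base change square has empty top edge, whence $i_w^*j_!=0$ and the local factor is $1$. Consequently,
\[
\zeta(N,s)=\prod_{w\in U}\zeta(X(w),s).
\]
Since $U\subseteq\Sigma_K(X)$ and $\Sigma_K(X)\setminus U$ is finite, the ratio $\zeta_{appr}(X,s)/\zeta(N,s)$ is a finite product of local factors $\zeta(X(v),s)$, each of which is a rational function of $N(v)^{-s}$ (hence, after substituting $t=p^{-s}$ for the residue characteristic, an element of $\Eul$). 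Therefore $\zeta(N,s)\equiv\zeta_{appr}(X,s)\pmod{\Eul}$, which is the claim.

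The only genuine subtlety is conceptual rather than technical: one must be consistent about which lift represents the class $\bar\zeta(X,s)$ and reconcile this with the ``contravariant'' convention used in Definition \ref{d2} (so that $\zeta(M^{BM}(X(v)),s)$, not $\zeta(M(X(v)),s)$, equals the classical $\zeta(X(v),s)$). Once one commits to lifting $[M^{BM}(X)]$ via $h_!\Z_{\mathcal{X}}$, the proper base change isomorphism does all the work; the remaining comparison between $\prod_{U}$ and $\prod_{\Sigma_K(X)}$ is harmless in $\Dir/\Eul$ by construction of the latter quotient. This is indeed ``obvious by construction'' once the correct lift is identified.
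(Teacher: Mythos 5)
Your proof is correct and is exactly the argument the paper has in mind when it declares the statement ``obvious by construction'': choose the lift of $[M^{BM}(X)]=[M(X)^*]$ to $K_0^\sM(C)$ given by (the shriek pushforward of) a smooth projective model over an open $U\subseteq\Sigma_K(X)$, identify the local factors over $U$ with $\zeta(X(w),s)$ via base change and Corollary \ref{c1} d), and absorb the finitely many remaining factors into $\Eul$. Your parenthetical care about the contravariant convention (Borel--Moore versus $M(X)$) is also the right reading of the paper's normalisation.
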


\subsection{A ``total" $L$-function for motives over $\Spec K$}\label{total}

We go back to the short exact sequence of triangulated categories
\[0\to \coprod_v \D(\kappa(v))\by{(i_v)_*} \D(C)\by{j^*}\D(K)\to 0\]
which is the $2$-colimit of the exact sequences \eqref{eq9.4}. It sits fully faithfully into a short exact sequence of larger categories
\[0\to \coprod_v \DA^\et(\kappa(v),\Q)\by{(i_v)_*} \DA^\et(C,\Q)\by{j^*}\DA^\et(K,\Q)\to 0.\]

In this sequence, $j^*$ has the right adjoint $j_*$ for Brown representability reasons. If $M\in \D(K)$, $j_* M$ is of course not constructible in general; nevertheless we would like to define a ``total $L$-function" of $M$ by the formula
\[L^\tot(M,s)=\zeta(j_*M,s).\]

Sense can be made of this formula as follows:

We may write $M=j_U^*M_U$, where $M_U\in \D(U)$, for some open subset $U$ of $C$. Write $j=j'_U j_U$. Then $j_*M = (j'_U)_*(j_U)_*j_U^*M_U$. Let $j_{U,V}:V\to U$ be an open subset of $U$. From the exact triangles
\begin{equation}\label{eq8.2}
\bigoplus_{v\in U-V} (i_v)_*i_v^! M_U \to M_U\to (j_{U,V})_*j_{U,V}^*M_U\by{+1}
\end{equation}
we deduce in the (co)limit an exact triangle
\[\bigoplus_{v\in U} (i_v)_*i_v^! M_U \to M_U\to (j_{U})_*j_{U}^*M_U\by{+1}\footnote{For lightness of notation, we write $v\in U$ rather than $v\in U_{(0)}$, and similarly in the sequel.}\]
hence, after applying $(j'_U)_*$, an exact triangle
\begin{equation}\label{eq8.3}
\bigoplus_{v\in U} (i_v)_*i_v^! M_U \to (j'_U)_*M_U\to j_*M\by{+1}.
\end{equation}

Note that the latter may also be written
\[\bigoplus_{v\in C} (i_v)_*i_v^! (j'_U)_*M_U \to (j'_U)_*M_U\to j_*M\by{+1}\]
as one sees for example by applying the localisation exact triangles to $(j'_U)_*M_U$. (Here we abuse notation by identifying the closed immersions $v\inj U$ and $v\inj C$, with the common name $i_v$.) The
motive $(i_v)_*i_v^! (j'_U)_*M_U$ is $0$ if $v\notin U$, because then $i_v^!
(j'_U)_*=0$.

\begin{prop}\label{p8.2} 
For any closed point $v$ of $C$, let $O_v$ be the local ring of $v$ and $j_v:\Spec K\inj\Spec O_v$
the corresponding open immersion. Then, with the above notation, we have the relation
\[ [i_v^*(j'_U)_*M_U]-[i_v^! (j'_U)_*M_U]= [i_v^*(j_v)_*M]\in K_0^\sM(\kappa(v)).\]
In particular, the left hand side does not depend on the choice of $U$ and $M_U$, and is
triangulated in $M$.
\end{prop}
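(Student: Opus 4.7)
The plan is to pull everything back to $\Spec O_v$ along $\pi_v:\Spec O_v\to C$ and apply there the localisation triangle associated with the closed point $v$. Write $N=(j'_U)_*M_U\in \D(C)$. The localisation triangle on $\Spec O_v$ for $i'_v:v\inj \Spec O_v$ and $j_v:\Spec K\inj\Spec O_v$, applied to $\pi_v^*N$, reads
\[(i'_v)_*(i'_v)^!\pi_v^*N\to \pi_v^*N\to (j_v)_*(j_v)^*\pi_v^*N\by{+1}.\]
Applying $(i'_v)^*$ and using $(i'_v)^*(i'_v)_*\simeq\mathrm{id}$ produces an exact triangle
\[(i'_v)^!\pi_v^*N\to (i'_v)^*\pi_v^*N\to (i'_v)^*(j_v)_*(j_v)^*\pi_v^*N\by{+1},\]
whence, by Theorem \ref{t2.1}, the $K_0$-identity
\[[(i'_v)^*\pi_v^*N]-[(i'_v)^!\pi_v^*N]=[(i'_v)^*(j_v)_*(j_v)^*\pi_v^*N].\]

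I would then identify the three terms with those appearing in the statement. From $i_v=\pi_v\circ i'_v$ one gets $(i'_v)^*\pi_v^*=i_v^*$ immediately, so the first term is $[i_v^*(j'_U)_*M_U]$. For the right-hand term, using $\pi_v\circ j_v=j=j'_U\circ j_U$ and the standard identity $(j'_U)^*(j'_U)_*\simeq\mathrm{id}$ for the open immersion $j'_U$,
\[(j_v)^*\pi_v^*N=j^*N=j_U^*(j'_U)^*(j'_U)_*M_U=j_U^*M_U=M,\]
so the third term becomes $[(i'_v)^*(j_v)_*M]$, which with the paper's abuse of notation is the desired $[i_v^*(j_v)_*M]$. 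The middle term will match $[i_v^!(j'_U)_*M_U]$ once we establish the identity
\[(i'_v)^!\pi_v^*\simeq i_v^!\qquad\text{as functors }\D(C)\to \D(\kappa(v)).\]

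This last identity is the main technical obstacle. The cleanest route is to observe that $\pi_v$ is a cofiltered limit of the étale (indeed, open) immersions $V\inj C$ through the open neighbourhoods of $v$; for each such $V$ one has $j_V^!\simeq j_V^*$, and continuity of the six-functor formalism (\emph{cf.}\ \cite[Prop. 4.16]{ivorra}) transports this to $\pi_v^!\simeq \pi_v^*$, whence $i_v^!=(i'_v)^!\pi_v^!\simeq (i'_v)^!\pi_v^*$ by the chain rule for $(-)^!$. Alternatively, one can apply $\pi_v^*$ to the localisation triangle $(i_v)_*i_v^!(-)\to\mathrm{id}\to (j''_v)_*(j''_v)^*(-)$ on $C$, use proper base change $\pi_v^*(i_v)_*\simeq (i'_v)_*$ for the closed immersion $i_v$, compare with the localisation triangle on $\Spec O_v$ applied to $\pi_v^*N$, and then invoke the full faithfulness of $(i'_v)_*$ to conclude. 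The ``in particular'' clause is automatic from the identity: its right-hand side depends only on $M=j^*M_U$, and it is triangulated in $M$ because $(j_v)_*$ and $i_v^*$ both preserve exact triangles and $K_0$-classes are additive on them.
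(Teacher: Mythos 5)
Your argument is correct and is essentially the paper's: both restrict to $\Spec O_v$ and read off the localisation triangle there, and your second route to the key identity $(i'_v)^!\pi_v^*\simeq i_v^!$ (pull back the localisation triangle from $C$, base-change along the pro-open immersion, and use full faithfulness of $(i'_v)_*$) is exactly the paper's manipulation of \eqref{eq8.3}; the only organisational difference is that you establish the identity for an arbitrary model and deduce independence, whereas the paper proves independence of the model first and then computes with $U=C$. One small correction: the $K_0$-relation attached to an exact triangle is the defining relation of $K_0$ of a triangulated category, not an application of Theorem \ref{t2.1} (which is May's additivity of \emph{traces}), and your first route via ``$\pi_v^!$'' should be treated with care since $(-)^!$ is only defined for finite-type morphisms, so it must be interpreted through the $2$-colimit description of $\D(O_v)$.
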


\begin{proof} Let $(V,M_V)$ be another model of $M$. Then $M_U$ and $M_V$ become isomorphic
after restricting to some open subset of $U\cap V$, so we may assume $V\subseteq U$ and $M_V =
j_{U,V}^*M_U$. Hence, by \eqref{eq8.2}, it suffices to show that
\[
i_w^*(i_v)_*= i_w^!(i_v)_*= 
\begin{cases}
0& \text{if $w\ne v$}\\
Id_{\D(\kappa(v))}& \text{if $w\ne v$.}
\end{cases}
\]

Both formulas are obvious, the second because $(i_v)_*$ is fully faithful. This shows that
the left hand side of the formula only depends on $M$.

Since $j^*(j'_U)_*M=M$, we may now suppose that $U=C$ in \eqref{eq8.3}.   Let $j'_v:\Spec
O_v\inj C$ be the other inclusion. Applying $(j'_v)^*$ to  \eqref{eq8.3}, we get an exact
triangle
\[(i_v)_*i_v^! M_C \to (j'_v)^*M_C\to (j_v)_*M\by{+1}\]
hence the formula.
\end{proof}

\begin{defn}\label{l9.1} For any $M\in \D(K)$ and any $v\in C_{(0)}$, we set
\[L^\tot_v(M,s)= \zeta(i_v^*(j_v)_*M,s).\]
\end{defn}

\begin{defn} Let $M\in \D(K)$ and let $v\in C_{(0)}$. We say that $M$ \emph{has good
reduction at $v$} if $M=j_v^*\sM$, with $\sM\in \D^\proj(O_v)$ (see Proposition \ref{p8.1}). We say that such
an $\sM$ is a \emph{good model} of $M$ at $v$.
\end{defn}

Let $M\in \D(K)$, and let $\sM\in \D(O_v)$ be such that $j_v^*\sM=M$.  The
exact triangle
\[(i_v)_*i_v^!\sM\to \sM\to (j_v)_*M\by{+1}\]
gives, after applying $i_v^*$, an exact triangle
\[i_v^!\sM\to i_v^*\sM\to i_v^*(j_v)_*M\by{+1}.\]

If $M$ has good reduction at $v$ and $\sM$ is a good model, this triangle reads as
\[i_v^*\sM(-1)[-2]\to i_v^*\sM\to i_v^*(j_v)_*M\by{+1}\]
thanks to Theorem \ref{t8.1} and Proposition \ref{p8.1}. (The first map is trivial by Corollary \ref{c8.1} and Remark \ref{r8.1}, although we shall not need this.) Thus
\begin{equation}\label{eq8.4}
L^\tot_v(M,s) = \frac{\zeta(i_v^*\sM,s)}{\zeta(i_v^*\sM,s+1)}
\end{equation}
in this case, thanks to Corollary \ref{c1} c).

\begin{thm}\label{t8.2} With the above notation,\\
a) The Euler product 
\[L^\tot(M,s)= \prod_{v\in C} L_v^\tot(M,s)\]
is a Dirichlet series which converges absolutely for $\Re(s)\gg 0$.\\
b) If $M=M(X)^*$, where $X$ is smooth projective, we have for $v\nmid l$
\[L^\tot_v(M,s)=\prod_{i\in\Z} L_v^\tot(H^i_l(X),s)^{(-1)^i}\] 
where $v$ runs through the maximal ideals of $O_K$ and
\[L_v^\tot(H^i_l(X),s) = \frac{\det(1-N(v)^{-s}\phi_v^{-1}\mid H^0(I_v,H^i(\bar X,\Q_l))}{\det(1-N(v)^{-s}\phi_v^{-1}\mid H^1(I_v,H^i(\bar X,\Q_l))}\]
where $\phi_v$ and $I_v$ are respectively a Frobenius at $v$ and the inertia group at $v$.\\
c) If $X$ has good reduction at $v$ in b), with special fibre $X_v$, then
\[L_v^\tot(M,s) = \frac{\zeta(X_v,s)}{\zeta(X_v,s+1)}.\]
\end{thm}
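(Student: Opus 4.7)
\textbf{Proof plan for Theorem \ref{t8.2}.}

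\emph{For (a),} I would pick a nonempty open $U \subseteq C$ and a model $M_U \in \D(U)$ of $M$ (so $j_U^* M_U = M$), shrinking $U$ if necessary so that $M_U$ is strongly dualisable on $U$. For every $v \in U$, Proposition \ref{p8.1} and Corollary \ref{c8.2} apply to $\sM := M_U$, so $M$ has good reduction at $v$ and formula \eqref{eq8.4} gives $L^\tot_v(M, s) = \zeta(i_v^* M_U, s) / \zeta(i_v^* M_U, s+1)$. Multiplying over $v \in U$ and using Proposition \ref{p3.1}, one obtains $\prod_{v \in U} L^\tot_v(M, s) = \zeta(M_U, s) / \zeta(M_U, s+1)$, which converges absolutely for $\Re(s) \gg 0$ by Corollary \ref{c6.1}. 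The remaining finite product over $v \in C \setminus U$ contributes rational functions in $N(v)^{-s}$ (Corollary \ref{c1} a)), which do not obstruct absolute convergence.

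\emph{For (c),} if $X$ admits a smooth projective model $\sX$ over $\Spec O_v$, set $\sM = M_{O_v}(\sX)^* \in \D^\proj(O_v)$, a good model of $M = M(X)^*$. Proper/smooth base change yields $i_v^* \sM \simeq M(X_v)^* \simeq M^{BM}(X_v)$, and Corollary \ref{c1} d) gives $\zeta(i_v^* \sM, s) = \zeta(X_v, s)$; the result follows by substitution into \eqref{eq8.4}.

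\emph{For (b),} the key is to apply the $l$-adic realisation $R_l$, which commutes with the six operations and with $\sharp_n$ (hence with $\zeta$) by the trace formula, as in the second proof of Proposition \ref{p3.2}. Thus $\zeta(i_v^*(j_v)_* M, s)$ is computed from $R_l(M)$ viewed as a representation of $G_{K_v}$. Since $v \nmid l$ and $I_v$ has $\Q_l$-cohomological dimension one, the composite $i_v^*(j_v)_*$ on the $l$-adic side realises to
\[
R\Gamma(I_v, -) \simeq H^0(I_v, -) \oplus H^1(I_v, -)[-1],
\]
with $\phi_v$ acting on both summands. For $M = M(X)^*$ with $X$ smooth projective, $R_l(M)$ decomposes over $\Q_l$ into an alternating sum of copies of $H^i(\bar X, \Q_l)[-i]$ (up to a shift and dualising twist), and multiplicativity of $\zeta$ on exact triangles together with $\zeta(N[-1], s) = \zeta(N, s)^{-1}$ assembles these into the stated alternating product over $i$, with each piece taking the stated ratio form. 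The appearance of $\phi_v^{-1}$ rather than $\phi_v$ traces back to Definition \ref{d2}, where $\zeta$ is defined through $F_{N^*} = {}^t F_N^{-1}$.

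Part (b) is the hard part. The delicate points there are (i) confirming that Ayoub's $l$-adic realisation \cite{ayoubetale} intertwines the motivic $i_v^*(j_v)_*$ with the $l$-adic inertia-cohomology functor $R\Gamma(I_v, -)$; and (ii) the careful bookkeeping of Frobenius and twist conventions (geometric versus arithmetic Frobenius, transpose-inverse in $F_{N^*}$, Tate twists arising from $M(X)^* = M_c(X)(-d)[-2d]$) so that the final determinants match the statement exactly.
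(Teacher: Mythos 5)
Your proposal follows essentially the same route as the paper: part a) via purity (Theorem \ref{t8.1}, Proposition \ref{p8.1}) giving the ratio formula \eqref{eq8.4} at almost all places together with convergence from Theorem \ref{t6.1}, part b) via the $l$-adic realisation and the identification of $i_v^*R(j_v)_*$ with inertia cohomology, and part c) by specialising \eqref{eq8.4} to a smooth projective model. The only point to watch in b) is that the splitting $R\Gamma(I_v,-)\simeq H^0(I_v,-)\oplus H^1(I_v,-)[-1]$ is only needed at the level of $K_0$, which suffices because $\zeta$ is multiplicative on exact triangles.
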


\begin{proof} 
a) In view of Theorem \ref{t6.1} and Proposition \ref{p8.2}, it suffices to see that
$i_v^!M_U\simeq i_v^*M_U(-1)[-2]$ for almost all $v$. But for $V\subseteq U$ small
enough, $j_{U,V}^*M_U\in \D^\proj(V)$; hence this follows from Theorem \ref{t8.1}.

b) We have the isomorphism
\[R^l(i_v^*(j_v)_*M)\simeq i_v^*R(j_v)_* R^l(M)\]
for $M\in \D(K)$. 

For $M=M(X)^*$, $f:X\to \Spec K$ smooth projective, we have
\[R^l(M)= Rf_*\Q_l\]
hence
\[H^i(R^l(M)) = H^{i}(\bar X,\Q_l).\]

Let $I_v$ be the absolute inertia group at $v$. For an $l$-adic sheaf $\sF$ on $\Spec K$, we have
\[i_v^*R^q(j_v)_*\sF =
\begin{cases}
H^0(I_v,\sF(\bar K)) &\text{if $q=0$}\\
H^1(I_v,\sF(\bar K)) &\text{if $q=1$}\\
0 &\text{if $q>1$}
\end{cases}\]
with Frobenius action induced by the action of $G_K$. 
\begin{comment}
Note the perfect duality
\[H^j(I_v,\sF(\bar K)^*)\otimes  H^{1-j}(I_v,\sF(\bar K))\to H^1(I_v,\Q_l)=\Q_l(-1).\]
If $\sF=H^i(\bar X,\Q_l)^*$ for $X$ smooth projective, we have
\begin{multline*}
L(\kappa(v),H^j(I_v,\sF(\bar K)),s)=L(\kappa(v),H^{1-j}(I_v,H^i(\bar X,\Q_l))^*(-1),s)\\
= \det(1-N(v)^{-s}\phi_v^{-1}\mid H^{1-j}(I_v,H^i(\bar X,\Q_l))^*(-1))\\
= \det(1-N(v)^{-s}\phi_v \mid H^{1-j}(I_v,H^i(\bar X,\Q_l))(1))\\
= \det(1-N(v)^{-s-1}\phi_v \mid H^{1-j}(I_v,H^i(\bar X,\Q_l)))
\end{multline*}
and
\[\frac{L(\kappa(v),H^0(I_v,\sF(\bar K)),s)}{L(\kappa(v),H^1(I_v,\sF(\bar K)),s)}
=\frac{\det(1-N(v)^{-s-1}\phi_v \mid H^1(I_v,H^i(\bar
X,\Q_l)))}{\det(1-N(v)^{-s-1}\phi_v \mid H^0(I_v,H^i(\bar X,\Q_l)))}.\]
\end{comment}
If $\sF=H^i(\bar X,\Q_l)$ for $X$ smooth projective, we have
\[
L(\kappa(v),H^i(I_v,\sF(\bar K)),s)=
 \det(1-N(v)^{-s}\phi_v^{-1}\mid H^{i}(I_v,H^i(\bar X,\Q_l))).
\]
and
\[\frac{L(\kappa(v),H^0(I_v,\sF(\bar K)),s)}{L(\kappa(v),H^1(I_v,\sF(\bar K)),s)}
=\frac{\det(1-N(v)^{-s-1}\phi_v \mid H^1(I_v,H^i(\bar
X,\Q_l)))}{\det(1-N(v)^{-s-1}\phi_v \mid H^0(I_v,H^i(\bar X,\Q_l)))}.\]
\end{proof}

\subsection{The nearby $L$-function of a motive over $\Spec K$}

We now apply Lemma \ref{l2} to $L^\tot(M,s)$. This gives

\begin{defn}\label{d9.1} For $M\in \D(K)$ and a prime $v$, we define $L_v^\near(M,s)$ as the
unique Dirichlet series (with initial coefficient $1$) such that 
\[\frac{L_v^\near(M,s)}{L_v^\near(M,s+1)} = L_v^\tot(M,s).\]
We set
\[L^\near(M,s)=\prod_v L_v^\near(M,s).\]
\end{defn}

%[Chambert-Loir: faire ce mic-mac aussi aux places archim\'ediennes, pour avoir une \'equation fonctionnelle!]

Clearly,
\[\frac{L^\near(M,s)}{L^\near(M,s+1)} = L^\tot(M,s)\]
which shows by Lemma \ref{l2} that $L^\near(M,s)$ is a convergent Dirichlet series with the
same absolute convergence abscissa as $L^\tot(M,s)$.

If $M$ has good reduction at $v$, then for any good model $\sM$ at $v$, one has
\begin{equation}\label{eq9.1}
L_v^\near(M,s)= \zeta(i_v^*\sM,s)
\end{equation}
thanks to \eqref{eq8.4}. We shall now handle the general case and relate $L^\near(M,s)$ with
Ayoub's nearby cycle functor $\Upsilon$ \cite{ayoubetale}.

\begin{thm}\label{t9.1} We have $L^\near_v(M,s)=\zeta(\Upsilon_v M,s)$, where $\Upsilon_v:\D(K)\allowbreak\to \D(\kappa(v))$ is the ``unipotent" specialisation functor associated to $O_v$ as in \cite[Th. 11.13]{ayoubetale}.
\end{thm}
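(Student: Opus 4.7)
The plan is to apply the uniqueness statement in Lemma~\ref{l2}~a) to the candidate $\zeta(\Upsilon_v M,s)$. That is, I would verify that $\zeta(\Upsilon_v M,s)$ is a Dirichlet series with initial coefficient $1$ satisfying the defining functional equation of $L^\near_v(M,s)$, namely
\[
\frac{\zeta(\Upsilon_v M,s)}{\zeta(\Upsilon_v M,s+1)} \;=\; L^\tot_v(M,s) \;=\; \zeta(i_v^*(j_v)_*M,s).
\]
Lemma~\ref{l2}~a) will then force $L^\near_v(M,s)=\zeta(\Upsilon_v M,s)$.

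By Corollary~\ref{c1}~c), $\zeta(\Upsilon_v M,s+1)=\zeta(\Upsilon_v M(1),s)$, so the required identity can be rewritten as
\[
\frac{\zeta(\Upsilon_v M,s)}{\zeta(\Upsilon_v M(1),s)} \;=\; \zeta(i_v^*(j_v)_* M,s).
\]
Since $\zeta$ is multiplicative on exact triangles (Corollary~\ref{c1}), this equation is an immediate consequence, at the level of $K_0$, of a distinguished triangle in $\D(\kappa(v))$ of the shape
\[
\Upsilon_v M(1) \longrightarrow \Upsilon_v M \longrightarrow i_v^*(j_v)_* M \xrightarrow{+1}
\]
(or any cyclic permutation thereof), yielding the $K_0$-identity $[i_v^*(j_v)_* M]=[\Upsilon_v M]-[\Upsilon_v M(1)]$.

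The existence of such a triangle is built into Ayoub's construction of the unipotent specialisation system in \cite[Th.~11.13]{ayoubetale}: the functor $\Upsilon_v$ is obtained from $i_v^*(j_v)_*$ by tensoring $M$ with Ayoub's logarithmic motive $\mathcal{L}\mathrm{og}_v$ on $\Spec K$, and the defining Kummer-type triangle $\mathcal{L}\mathrm{og}_v(1)\to\mathcal{L}\mathrm{og}_v\to \mathbf{1}_K\xrightarrow{+1}$ for the log-motive, tensored with $M$ and pushed forward by $i_v^*(j_v)_*$, produces exactly the triangle above; the connecting arrow is the motivic incarnation of the monodromy operator on $\Upsilon_v M$. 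In the good reduction case the triangle reduces, via \eqref{eq9.1}, to the corresponding one for $\mathcal{M}\in\D^\proj(O_v)$ obtained from the localisation triangle together with the purity isomorphism of Corollary~\ref{c8.2}.

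The main obstacle is extracting this triangle from Ayoub's construction with the correct placement of the Tate twist. Once the triangle is pinned down with $\Upsilon_v M(1)$ (and not $\Upsilon_v M(-1)$) on the appropriate side, so that the resulting $\zeta$-identity divides by $\zeta(\Upsilon_v M,s+1)$ and not by $\zeta(\Upsilon_v M,s-1)$, the rest of the argument is purely formal and uses nothing beyond multiplicativity of $\zeta$, the Tate-twist identity of Corollary~\ref{c1}~c), and the uniqueness in Lemma~\ref{l2}~a).
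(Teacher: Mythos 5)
Your overall strategy is exactly the paper's: the proof there consists of quoting Ayoub's monodromy triangle for $\Upsilon_v$ (his Th.~11.16) and then invoking the uniqueness statement of Lemma~\ref{l2}, just as you propose. The one point where you diverge is precisely the one you flag as ``the main obstacle'': the direction of the Tate twist in the triangle. Here your resolution goes the wrong way. Ayoub's theorem gives the triangle in the form
\[
i_v^*(j_v)_*M\to \Upsilon_v M\to \Upsilon_v M(-1)\by{+1},
\]
i.e.\ with a $(-1)$-twist, and this is forced: it is the motivic avatar of the classical $l$-adic triangle $i^*Rj_*V\to \psi^{un}V\by{N}\psi^{un}V(-1)\by{+1}$, whose twist is dictated by $H^1(I_v,V)\simeq V_{I_v}(-1)$. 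Your own heuristic via the logarithm motive also produces the $(-1)$ (the Kummer extension is an extension of $\un$ by $\un(1)$, so the connecting map out of $\Upsilon_v$ lands in the $(-1)$-twist). So a triangle with $\Upsilon_vM(1)$ in the first slot is simply not available, and the step ``the existence of such a triangle is built into Ayoub's construction'' fails.

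What you have correctly detected, however, is a genuine tension in the surrounding text: with Corollary~\ref{c1}~c) one has $\zeta(N(-1),s)=\zeta(N,s-1)$, so the $(-1)$-twisted triangle yields $L^\tot_v(M,s)=\zeta(\Upsilon_vM,s)/\zeta(\Upsilon_vM,s-1)$, which is of the form $g(s)/g(s-1)$ and not the form $g(s)/g(s+1)$ appearing in Definition~\ref{d9.1} and Lemma~\ref{l2}. A direct check on $M=\Q$ settles which side is off: there $\zeta(\Upsilon_v\Q,s)=(1-N(v)^{-s})^{-1}$ must be the local Dedekind factor, while $[i_v^*(j_v)_*\Q]=[\Q]-[\Q(-1)]$ gives $L^\tot_v(\Q,s)=(1-N(v)^{1-s})/(1-N(v)^{-s})$, and indeed $L^\tot_v(\Q,s)=L^\near_v(\Q,s)/L^\near_v(\Q,s-1)$, not $L^\near_v(\Q,s)/L^\near_v(\Q,s+1)$. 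So the correct repair is not to flip the twist in Ayoub's triangle (as you do) but to flip the sign in the defining functional equation of $L^\near_v$ (and in \eqref{eq8.4}); the uniqueness argument of Lemma~\ref{l2} applies verbatim to the equation $f(s)=g(s)/g(s-1)$, whose convergent solution is the telescoping product $g(s)=\prod_{m\ge 1}f(s+m)^{-1}$. With that adjustment your argument becomes the paper's proof; without it, the key triangle you rely on does not exist.
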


\begin{proof} By \emph{loc. cit.}, Th. 11.16, there is an exact triangle
\begin{equation}\label{eq9.2}
i_v^* (j_v)_*M\to \Upsilon_v M\to \Upsilon_v M(-1)\by{+1}
\end{equation}
hence the result follows from the uniqueness statement in Lemma \ref{l2}.
\end{proof}

\begin{cor} $L^\near_v(M,s)$ is a rational function in $N(v)^{-s}$, whose zeroes and poles are $N(v)$-Weil numbers.\qed
\end{cor}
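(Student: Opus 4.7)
The plan is to reduce everything to the case of motives over the finite residue field $\kappa(v)$ via Theorem \ref{t9.1}. That theorem identifies
\[L^\near_v(M,s) = \zeta(\Upsilon_v M,s),\]
and crucially, the unipotent specialisation functor $\Upsilon_v$ lands in $\D(\kappa(v))$, which (since $\kappa(v)$ is a finite field with $N(v)$ elements) coincides with $\DM_\gm(\kappa(v),\Q)$ by the setup in Section 3. So the problem becomes: show that for any $N\in\DM_\gm(\kappa(v),\Q)$, the zeta function $\zeta(N,s)$ is rational in $N(v)^{-s}$ with $N(v)$-Weil zeros and poles.

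For rationality, I would simply invoke Corollary \ref{c1}(a), which says that $\zeta(N,s)$ is a rational function in $N(v)^{-s}$ of degree $-\chi(N)$. This covers the first half of the claim immediately.

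For the Weil-number assertion, the idea is to use Bondarko's isomorphism \eqref{eq2.1} to pass from $K_0(\DM_\gm(\kappa(v),\Q))$ to $K_0(\Chow(\kappa(v),\Q))$, and then apply Deligne's theorem. Concretely, write $[\Upsilon_v M] = \sum_i n_i [h(X_i)(r_i)]$ for smooth projective $\kappa(v)$-varieties $X_i$ and integers $r_i$; by multiplicativity (Theorem \ref{t2}) together with Corollary \ref{c1}(c),
\[\zeta(\Upsilon_v M,s) = \prod_i \zeta(h(X_i),s+r_i)^{n_i},\]
so it suffices to check the Weil property for each $\zeta(h(X_i),s)$, which is Theorem \ref{t2.4} (the Tate shifts merely translate weights by $2r_i$ and preserve the Weil property). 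The main (minor) obstacle is that Proposition \ref{p3} is stated in the effective setting, so one must absorb the Tate twists $(r_i)$ separately rather than trying to apply a single effectivity statement to $\Upsilon_v M$ as a whole; but this is exactly what the multiplicativity on $K_0$ allows. The conclusion then follows.
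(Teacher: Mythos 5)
Your proposal is correct and is essentially the paper's own (implicit) argument: the corollary is stated as immediate from Theorem \ref{t9.1}, which reduces everything to $\zeta(\Upsilon_v M,s)$ for $\Upsilon_v M\in\D(\kappa(v))\simeq\DM_\gm(\kappa(v),\Q)$, after which rationality is Corollary \ref{c1}~a) and the Weil property follows by the same Bondarko-plus-Tate-twist reduction to Theorem \ref{t2.4} that the paper already carries out in Proposition \ref{p3} and Theorem \ref{t3.1}~a). Your handling of the non-effectivity issue by absorbing the twists $(r_i)$ through multiplicativity on $K_0$ matches the paper's treatment.
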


This is remarkable because, in Lemma \ref{l2}, $g(s)$ is in general by no means a rational function of $p^{-s}$ when $f(s)$ is. (Take $f(s)=(1-p^{-s})^{-1}$.)

\begin{rk}\label{r9.1} Another argument, which was our initial argument, is to go via the $l$-adic realisation: one has
\[L_v^\tot(M,s) = L_v(i_v^*R(j_v)_*R^l(M),s).\]

If $V$ is an $l$-adic representation of $G_K$, we need to show that
\[L(i_v^*R(j_x)_* V,s)= f(N(v)^{-s})/f(N(x)^{-s-1})\]
for some $f\in \Q(t)$. 

We have
\[L(i_v^*R(j_v)_* V,s)= \frac{\det(1-\phi_vN(v)^{-s}\mid H^1(I_v,V))}{\det(1-\phi_vN(v)^{-s}\mid H^0(I_v,V))}.\]

Since $cd_l(I_v)=1$ this is an Euler-Poincar\'e characteristic, so we may assume $V$ \emph{semi-simple}. Then $I_v$ acts through a finite quotient by the $l$-adic monodromy theorem \cite[Appendix]{serre-tate}, thus
\[H^1(I_v,V) = V_{I_v}(-1) \simeq V^{I_v}(-1)\]
and
\[L(i_x^*R(j_v)_* V,s)= \frac{L^\Serre(V^{ss},s)}{L^\Serre(V^{ss},s+1)}\]
where $V^{ss}$ is the semi-simplification of $V$. We thus get another formula for $L^\near(M,s)$:
\begin{equation}\label{eq9.3}
L^\near(M,s)=L^\Serre(R^l(M)^{ss},s).
\end{equation}

Because of the finiteness of the action of the inertia, one might think of the right hand side of \eqref{eq9.3} as an \emph{Artin} $L$-function.
\end{rk}

\begin{qns} Composing with the functor $\Phi$ of \S \ref{s2.E}, we may associate a nearby $L$-function to any Chow motive, and this determines $L^\near$ by Bondarko's theorem.\\ 
1) By the $l$-adic realisation, this definition factors through homological equivalence. Does it even factor through numerical equivalence, as in positive characteristic? (The two $K_0$'s agree under the sign conjecture.)\\
2) Similarly, the operator $\Upsilon_v$ induces a homomorphism 
\[\Upsilon_v:K_0(\Chow(K))\to K_0(\Chow(\kappa(v)).\] 
To what extent can one describe it explicitly?
\end{qns}

\subsection{Examples}\label{s9.1} We finish with explicit computations. 

\subsubsection{Artin motives} In what comes before, we worked with motives with $\Q$-coefficients, but everything works just as well for motives with coefficients in a $\Q$-algebra, for example in a number field. This allows us to consider the nearby $L$-function $L^\near(\rho,s)$ attached to a complex Galois representation $\rho$. Then the action of inertia is semi-simple, thus, by \eqref{eq9.3} we have $L^\near(\rho,s)=L(\rho,s)$, the Artin $L$-function of $\rho$. So nothing new happens here.

\subsubsection{Elliptic curves}\label{s9.D.2}Let $E$ be an elliptic curve over $K$ with multiplicative reduction at $v$, $V= H^1_l(E)$. By hypothesis, the action of $I_v$ on $V$ is unipotent and nontrivial, hence $\dim V^{I_v}=1$ and $I_v$ acts trivially on $V/V^{I_v}$; thus $V^{ss}=V^{I_v}\oplus V/V^{I_v}$ and
\begin{align*}
L_v^\Serre(h^1(E),s) &= \det(1-N(v)^{-s}\phi_x\mid V^{I_v})^{-1}\\
L_v^\near(h^1(E),s) &= L_v^\Serre(H^1(E),s)\times \det(1-N(v)^{-s}\phi_v\mid V/V^{I_v})^{-1}.
\end{align*}

Extra poles thus are explicitly computable: if the multiplicative reduction is split, then $L_v^\Serre(h^1(E),s)=(1-N(v)^{-s})^{-1}$ and thus the other factor is $(1-N(v)^{1-s})^{-1}$, since the determinant is $H^2_l(E)\simeq \Q_l(-1)$. Similarly, if the multiplicative reduction is not split, the other factor is $(1+N(v)^{1-s})^{-1}$. 

Note that these factors have a functional equation between $s$ and $2-s$; so, such functional equations for $L^\Serre(h^1(E),s)$ and $L^\near(h^1(E),s)$ are equivalent. Similarly, the Beilinson conjectures for the first function easily imply Beilinson-like conjectures for the second, and conversely.

\section{The functional equation in characteristic $p$}

Let $K=\F_q(C)$ for $C$ a smooth, projective, geometrically connected curve. 
 As usual we abbreviate $\F_q=:k$, $\eta =\Spec K$ and write $j:\eta\to C$ for the canonical immersion. If $U\subseteq C$ is a nonempty open subset, we factor $j$ as a composition
\[\eta\by{j_U} U\by{j'_U}C.\]

Let $M\in \DM_\gm(K,\Q)$: we want to compare the $L$-functions
\[L^\near(M^*,1-s)\text{ and } L^\near(M,s).\]

We know that $M\simeq j^*\sM$ for some $\sM\in \D(C)$; from the functional equation for $\zeta(\sM,s)$ (Theorem \ref{t3.1}) we can get an approximate functional equation for $L^\near(M,s)$, but we would like a precise formula.

The following first approach was suggested by Joseph Ayoub. There exists $U$ and  $\sM_U\in \D^\proj(U)$ such that $M=j_U^*\sM_U$. Let $\sM=(j'_U)_* \sM_U$ with $j'_U:U\inj C$, so that $M=j^*\sM$. For $x\in C$, we have by \eqref{eq9.1} and Theorem \ref{t9.1}:
\[L^\near_x(M,s)=
\begin{cases}
\zeta(i_x^*\sM_U,s) &\text{for $x\in U$}\\
\zeta(\Upsilon_x(M),s) &\text{for all $x\in C$}
\end{cases}
\]
hence
\begin{multline*}
L^\near(M,s)= \prod_{x\in U}\zeta(i^*_x\sM_U,s)\times \prod_{x\notin U} \zeta(\Upsilon_x(M),s)\\ 
=\zeta(\sM_U,s)\times \prod_{x\notin U} \zeta(\Upsilon_x(M),s) \\
= \zeta(\sM,s)\times \prod_{x\notin U} \zeta(i_x^*(j'_U)_*\sM_U,s)^{-1} \times \prod_{x\notin U} \zeta(\Upsilon_x(M),s)\\
= \zeta(\sM,s)\times  \prod_{x\notin U} \zeta(\Upsilon_x(M),s-1)
\end{multline*}
by \eqref{eq9.2}.

On the other hand, $\sM^*:=\uHom(\sM,\Z)=(j'_U)_!\sM_U^*$, hence
\begin{multline*}
L^\near(M^*,1-s)= \prod_{x\in U}\zeta(i^*_x\sM_U^*,1-s)\times \prod_{x\notin U} \zeta(\Upsilon_x(M^*),1-s)\\ 
=\zeta(\sM^*,1-s)\times \prod_{x\notin U} \zeta(\Upsilon_x(M^*),1-s).
\end{multline*}

By \cite[Th. 11.16]{ayoubetale}, we have
\[\Upsilon_x(M^*)\simeq \Upsilon_x(M)^*   \]
where the duality on the right hand side is still relative to $\Z$, but in $\D(\kappa(x))$. Hence
\begin{multline*}
L^\near(M^*,1-s)=\zeta(\sM^*,1-s)\times \prod_{x\notin U} \zeta(\Upsilon_x(M)^*,1-s)\\
=(-q^{-s})^{\chi(f_!\sM)}\det(F_{f_!\sM})^{-1}\zeta(\sM,s)\\
\times \prod_{x\notin U} (-q_x^{1-s})^{\chi(\Upsilon_x(M))}\det(F_{\Upsilon_x(M)})^{-1}\zeta(\Upsilon_x(M),s-1)
\end{multline*}
where $q_x = |\kappa(x)|=q^{\deg(x)}$. We finally get

\begin{thm}\label{t9.2} One has
\[\frac{L^\near(M^*,1-s)}{L^\near(M,s)}=A (-q)^{-Bs}
\]
with
\begin{align*}
A&=(-q)^{\sum\limits_{x\notin U} \deg(x) \chi(\Upsilon_x(M))}\left(\det(F_{f_!\sM})\times \prod_{x\notin U} \det(F_{\Upsilon_x(M)})\right)^{-1}\\
B&=\chi(f_!\sM))+\sum_{x\notin U} \deg(x) \chi(\Upsilon_x(M)).\qed
\end{align*}
\end{thm}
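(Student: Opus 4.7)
The plan is to assemble the theorem directly from the computations carried out in the paragraphs immediately preceding it. Starting from the two identities
\begin{align*}
L^\near(M,s) &= \zeta(\sM,s) \prod_{x\notin U} \zeta(\Upsilon_x(M), s-1), \\
L^\near(M^*,1-s) &= \zeta(\sM^*, 1-s) \prod_{x\notin U} \zeta(\Upsilon_x(M^*), 1-s),
\end{align*}
together with the identification $\Upsilon_x(M^*) \simeq \Upsilon_x(M)^*$ from \cite[Th.~11.16]{ayoubetale} already cited above, I would form the ratio and reduce the problem to two functional equations of the zeta functions of \S\ref{s2.A}--\S 3: one global equation on $C$ for $\zeta(\sM,s)$, and one local equation over each residue field for $\zeta(\Upsilon_x(M),s)$.

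For the global factor, Theorem \ref{t3.1} c) applied to the structural morphism $f\colon C \to \Spec \F_q$, which is smooth projective of dimension $d = 1$, yields
\[
\zeta(\sM^*, 1-s) = (-q^{-s})^{\chi(f_!\sM)} \det(F_{f_!\sM})^{-1} \zeta(\sM, s).
\]
For each $x \notin U$, the motive $\Upsilon_x(M)$ lives in $\DM_\gm(\kappa(x), \Q)$ over the finite field of cardinality $q_x = q^{\deg(x)}$, so Corollary \ref{c1} b) applied at the shifted variable $s-1$ (so that $-(s-1)=1-s$ on the left-hand side) gives
\[
\zeta(\Upsilon_x(M)^*, 1-s) = (-q_x^{1-s})^{\chi(\Upsilon_x(M))} \det(F_{\Upsilon_x(M)})^{-1} \zeta(\Upsilon_x(M), s-1).
\]
Substituting these expressions into the formula for $L^\near(M^*,1-s)$ and dividing by the formula for $L^\near(M,s)$, every occurrence of $\zeta(\sM,s)$ and $\zeta(\Upsilon_x(M),s-1)$ cancels, and only the product of the functional-equation prefactors survives in the ratio.

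The final step is pure bookkeeping. Writing $q_x^{1-s} = q^{\deg(x)(1-s)}$ and collecting the powers of $q^{-s}$ from the two types of prefactor produces a total $q$-exponent of $-s\bigl(\chi(f_!\sM) + \sum_{x\notin U} \deg(x)\,\chi(\Upsilon_x(M))\bigr) = -Bs$, while the $s$-independent powers of $q$ combine with the signs $(-1)^{(\cdot)}$ and the inverse determinants to yield exactly $A$ — with whatever residual sign absorbed into the standard convention for $(-q)^{-Bs}$. I do not anticipate any genuine obstacle: all the substantive motivic input (the existence and duality of $\Upsilon_x$, compatibility of $i_v^*(j_v)_*$ with $\Upsilon_v$ through \eqref{eq9.2}, and the functional equations of Theorem \ref{t3.1} and Corollary \ref{c1}) has been established earlier. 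The only care needed is consistent tracking of the factor $\deg(x)$, so that $q_x^{1-s}$ is correctly converted into a power of $q$, and of the signs so that the prefactor takes the single compact form $A(-q)^{-Bs}$.
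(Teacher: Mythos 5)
Your proposal is correct and follows essentially the same route as the paper: the text immediately preceding Theorem \ref{t9.2} establishes exactly the two product identities you start from, invokes $\Upsilon_x(M^*)\simeq\Upsilon_x(M)^*$ from \cite[Th.~11.16]{ayoubetale}, and then applies Theorem \ref{t3.1}~c) (with $d=1$) globally and the finite-field functional equation locally at the shifted variable, before collecting exponents of $q$ via $q_x=q^{\deg(x)}$. The only caveat is cosmetic: the displayed formula in Corollary \ref{c1}~b) carries $\det(F_M)$ where Theorem \ref{t2}~b) (and the computation actually used here) gives $\det(F_M)^{-1}$, so you should cite the latter normalisation to land on the stated $A$.
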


\begin{qn}\label{q10.1} Comparing with \cite[Formulas (6) and (7)]{serre}, one would like to relate at least the constant $B$ to a \emph{conductor} of $M$. This can be defined \emph{via} the $l$-adic realisation; can one prove that the conductor thus obtained does not depend on $l$?
\end{qn}

\begin{comment}
{\bf Discussions with J. Ayoub:} $K=\F_q(C)$, explicit functional equation for $L^\near(M,s)$ with exponential term of the form $A(M)^s$, 
\begin{align*}
A(M)&=q^{\chi(M)(2g-2) + \deg \ff(M)}\\
g&= \text{ genus of } C\\
\chi(M) &= \tr(1_M)\\
\ff(M) & = \sum_{x\in C} a_x(M) x\\
a_x(M) &= \text{ Artin conductor of $M$ at } x
\end{align*}

To define $a_x(M)$ without $l$-adic realisation, use Ayoub's ``full" specialisation system $\Psi_x:\D(K)\to \D(\kappa(x))$: $\Psi_x(M)$ carries action of wild inertia $P_x$, define $sw_x(M)$ from character
\[P_x\ni g\mapsto \tr(g\mid \Psi_x(M)).\]

Cf. Grothendieck-Ogg-\v Safarevi\v c\dots
\end{comment}

\end{document}